\newcommand{\displaybump}{\hbox to \@totalleftmargin{\hfil}}
\def\namedlabel#1#2{\begingroup
    #2%
    \def\@currentlabel{#2}%
    \phantomsection\label{#1}\endgroup
}
\DeclareMathOperator\Aut{Aut}
\DeclareMathOperator\GL{GL}
\DeclareMathOperator\SL{SL}
\DeclareMathOperator\SO{SO}
\DeclareMathOperator\id{id}
\DeclareMathOperator\tr{tr}
\DeclareMathOperator\stab{stab}
\DeclareMathOperator\supp{supp}
\DeclareMathOperator\bbC{\mathbb{C}}
\DeclareMathOperator\bbH{\mathbb{H}}
\DeclareMathOperator\bbN{\mathbb{N}}
\DeclareMathOperator\bbP{\mathbb{P}}
\DeclareMathOperator\bbQ{\mathbb{Q}}
\DeclareMathOperator\bbR{\mathbb{R}}
\DeclareMathOperator\bbZ{\mathbb{Z}}
\DeclareMathOperator\calB{\mathcal{B}}
\DeclareMathOperator\calM{\mathcal{M}}
\DeclareMathOperator\calN{\mathcal{N}}
\DeclareMathOperator\calP{\mathcal{P}}
\theoremstyle{definition}
\newtheorem{theorem}{Theorem}[section]
\newtheorem*{theorem*}{Theorem}
\newtheorem{lemma}[theorem]{Lemma}
\newtheorem*{lemma*}{Lemma}
\newtheorem{corollary}[theorem]{Corollary}
\newtheorem{definition}[theorem]{Definition}
\newtheorem*{definition*}{Definition}
\newtheorem{remark}[theorem]{Remark}
\newtheorem*{remark*}{Remark}
\newtheorem{proposition}[theorem]{Proposition}
\newtheorem*{proposition*}{Proposition}
\newtheorem{example}[theorem]{Example}
\newtheorem*{example*}{Example}
\newtheorem*{sketch of proof}{Sketch of Proof}
\newtheorem*{idea of proof}{Idea of Proof}
\title{Haar Measures}
\author{Stephan Tornier}
\date{\today}
\begin{document}

\begin{abstract}
This article provides a concise introduction to the theory of Haar measures on locally compact Hausdorff groups. We cover the necessary preliminaries on topological groups and measure theory, the Haar correspondence, unimodularity and Haar measures on coset spaces.
\end{abstract}

\maketitle

\section{Preliminaries}

References appear throughout the article. Apart from the classics by Haar \cite{Haa33}, Weil \cite{Wei65} and Bourbaki~\cite{Bou04}, the neat introduction by Knightly-Li~\cite[Sec. 7]{KL06} deserves highlighting.

\subsection{Locally Compact Hausdorff Groups}
The natural class of groups for which to consider Haar measures is that of locally compact Hausdorff groups which we review presently.

\vspace{0.2cm}
A \emph{topological group} is a group $G$ with a topology such that the multiplication map $G\times G\to G$ and the inversion map $G\to G$ are continuous. As a consequence, left and right multiplication by elements of $G$ as well as inversion are homeomorphisms of $G$. Therefore, the neighbourhood system of the identity $e\in G$ determines the topology on $G$. A topological space is \emph{locally compact} if every point has a compact neighbourhood; and it is \emph{Hausdorff} if any two distinct points have disjoint neighbourhoods. In the Hausdorff case, local compactness is equivalent to every point admitting a \emph{relatively compact} open neighbourhood, i.e. an open neighbourhood with compact closure.

\vspace{0.2cm}
The class of locally compact Hausdorff groups is stable under taking closed subgroups as the following proposition shows. Recall that if $X$ is a topological space and $A$ is a subset of $X$, we may equip $A$ with the \emph{subspace topology}, for which $U\subseteq A$ is open if and only if there is an open set $V\subseteq X$, such that $U=A\cap V$.

\begin{proposition}
Let $X$ be a locally compact Hausdorff space and let $A$ be a closed subset. Then $A$ is locally compact Hausdorff.
\end{proposition}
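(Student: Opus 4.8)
The plan is to verify the two properties separately. The Hausdorff part is immediate, while local compactness needs a short argument about how compactness interacts with the subspace topology.

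\textbf{Hausdorff.} Given distinct points $a,b\in A$, they are distinct in $X$, so there exist disjoint open sets $U,V\subseteq X$ with $a\in U$ and $b\in V$. Then $U\cap A$ and $V\cap A$ are disjoint and open in the subspace topology and contain $a$ and $b$ respectively. (This uses neither closedness of $A$ nor local compactness: every subspace of a Hausdorff space is Hausdorff.)

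\textbf{Locally compact.} Fix $a\in A$. Since $X$ is locally compact, there is a compact set $K\subseteq X$ and an open set $U\subseteq X$ with $a\in U\subseteq K$. I claim $K\cap A$ is a compact neighbourhood of $a$ in $A$. First, $U\cap A$ is open in $A$ and $a\in U\cap A\subseteq K\cap A$, so $K\cap A$ is a neighbourhood of $a$ in $A$. For compactness, note that $A$ closed in $X$ forces $K\cap A$ to be closed in $K$; a closed subset of a compact space is compact, so $K\cap A$ is compact. (Compactness is intrinsic, so it is irrelevant whether we view $K\cap A$ as a subspace of $K$, of $A$, or of $X$.) Hence every point of $A$ has a compact neighbourhood in $A$, as required. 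If one prefers the "relatively compact open neighbourhood" formulation highlighted earlier in the excerpt, one instead takes an open $W\subseteq X$ with $a\in W$ and $\overline{W}$ compact; then $W\cap A$ is open in $A$ and its closure in $A$ equals $\overline{W}\cap A$ intersected appropriately, a closed subset of the compact set $\overline{W}\cap A$, hence compact.

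There is no serious obstacle here; the only point requiring care is bookkeeping about the ambient space for each closure and each statement of openness — concretely, using that for $S\subseteq A$ the closure of $S$ in $A$ is the intersection with $A$ of its closure in $X$, and that closedness of $A$ keeps the relevant closures inside the compact sets produced above.
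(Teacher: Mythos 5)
Your proof is correct and is essentially the detailed version of the paper's one-line argument: the paper likewise reduces everything to the facts that subspaces of Hausdorff spaces are Hausdorff and that a closed subset of a compact set is compact, applied to the intersection of $A$ with a compact neighbourhood. No gaps; the only cosmetic issue is the slightly garbled final sentence about closures in the ``relatively compact'' variant, which is optional anyway.
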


\begin{proof}
Recalling that compact subsets of Hausdorff spaces are closed and that closed subsets of compact sets are compact, this is immediate following the definitions.
\end{proof}

As to coset spaces, we record the following lemma on a property of neighbourhoods that comes with the group structure.

\begin{lemma}\label{lem:prod_nbhd}
Let $G$ be a topological group. Then for every $x\in G$ and every neighbourhood $U$ of $e\in G$, there exists an open neighbourhood $V$ of $x$ such that $V^{-1}V\subseteq U$. 
\end{lemma}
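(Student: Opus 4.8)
The plan is to reduce everything to the continuity of a single map. Consider $\mu \colon G \times G \to G$, $(a,b) \mapsto a^{-1}b$. This is continuous, being the composition of the homeomorphism $\mathrm{inv}\times\mathrm{id}$ on $G\times G$ with the multiplication map $G\times G\to G$, both of which are continuous by the definition of a topological group. The point of the argument is that $\mu(x,x) = x^{-1}x = e$, so $(x,x)$ lies in the preimage under $\mu$ of any neighbourhood of $e$.

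Concretely, since $U$ is only assumed to be a neighbourhood of $e$, I would first shrink it: choose an open set $U_0$ with $e \in U_0 \subseteq U$. Then $\mu^{-1}(U_0)$ is open in $G\times G$ and contains $(x,x)$, so by the definition of the product topology there is a basic open rectangle $V_1 \times V_2 \subseteq \mu^{-1}(U_0)$ with $x \in V_1$ and $x \in V_2$, the sets $V_1, V_2$ being open. Putting $V := V_1 \cap V_2$ yields an open neighbourhood of $x$, and for any $v,w \in V$ one has $(v,w) \in V_1\times V_2$, hence $v^{-1}w = \mu(v,w) \in U_0 \subseteq U$; that is, $V^{-1}V \subseteq U$.

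An essentially equivalent alternative is to handle the case $x = e$ first: continuity of $\mu$ at $(e,e)$ produces an open neighbourhood $W$ of $e$ with $W^{-1}W \subseteq U$, and then $V := xW$ works, since left translation is a homeomorphism (so $V$ is open), $e \in W$ gives $x \in V$, and $V^{-1}V = W^{-1}x^{-1}xW = W^{-1}W \subseteq U$. Either way there is no genuine obstacle; the only points deserving care are that one must pass from the neighbourhood $U$ to an open subset before invoking continuity, and that the product topology furnishes two a priori distinct open sets $V_1, V_2$ which have to be intersected to obtain the single required neighbourhood $V$.
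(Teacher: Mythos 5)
Your first argument is exactly the paper's proof: pull back an open neighbourhood of $e$ under the continuous map $(a,b)\mapsto a^{-1}b$, extract a basic rectangle $V_{1}\times V_{2}$ around $(x,x)$, and set $V=V_{1}\cap V_{2}$. It is correct, and in fact slightly more careful than the paper's version in making explicit the shrinking of $U$ to an open set and the requirement that $x\in V_{1}\cap V_{2}$.
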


\begin{proof}
The map $\varphi:G\times G\to G,\ (g,h)\mapsto g^{-1}h$ is continuous. Hence there are open sets $V_{1},V_{2}\subseteq G$ such that $V_{1}^{-1}V_{2}=\varphi(V_{1}\times V_{2})\subseteq U$. Then $V=V_{1}\cap V_{2}$ serves.
\end{proof}

When $G$ is a topological group and $H\le G$ is a subgroup of $G$, we equip the set of cosets $G/H$ with the \emph{quotient topology}, i.e. $U\subseteq G/H$ is open if and only if $\pi^{-1}(U)\subseteq G$ is open, where $\pi:G\to G/H,\ g\mapsto gH$. Then $\pi$ is continuous and open, and left multiplication by $g\in G$ is a homeomorphism of $G/H$.

\begin{proposition}\label{prop:quot_haus}
Let $G$ be a topological group and let $H\le G$ be closed. Then $G/H$ is Hausdorff.
\end{proposition}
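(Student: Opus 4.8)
The plan is to separate two distinct points $xH\neq yH$ of $G/H$ by disjoint open sets, obtained as images under $\pi$ of suitable open sets in $G$; here the fact (recorded just before the statement) that $\pi$ is open is essential, since it is what turns open sets upstairs into open sets in the quotient.

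First I would record the dictionary between coset equality and the group law: for $a,b\in G$ one has $aH=bH$ if and only if $b^{-1}a\in H$. In particular $xH\neq yH$ is equivalent to $y^{-1}x\notin H$. Since $H$ is closed, $G\setminus H$ is then an open neighbourhood of the point $y^{-1}x$.

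Next I would exploit the continuity of the map $\varphi\colon G\times G\to G$, $(a,b)\mapsto b^{-1}a$, exactly as in the proof of Lemma~\ref{lem:prod_nbhd}: because $\varphi(x,y)=y^{-1}x\in G\setminus H$, there are open sets $V_{1}\ni x$ and $V_{2}\ni y$ in $G$ with $\varphi(V_{1}\times V_{2})=V_{2}^{-1}V_{1}\subseteq G\setminus H$. I then claim that the open sets $\pi(V_{1})\ni xH$ and $\pi(V_{2})\ni yH$ are disjoint: if $v_{1}H=v_{2}H$ for some $v_{i}\in V_{i}$, then $v_{2}^{-1}v_{1}\in H$, contradicting $V_{2}^{-1}V_{1}\cap H=\emptyset$. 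As $\pi$ is open, $\pi(V_{1})$ and $\pi(V_{2})$ are open, and this furnishes the required separation.

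I do not expect a genuine obstacle here; the single point that needs care is the bookkeeping in the last step, namely translating the coset equality $v_{1}H=v_{2}H$ into the membership $v_{2}^{-1}v_{1}\in H$ and matching it against the set $V_{2}^{-1}V_{1}$ produced by continuity. As an alternative organisation of the same computation, one could instead observe that the relation $R=\{(a,b)\in G\times G : b^{-1}a\in H\}$ is closed, being the preimage of the closed set $H$ under the continuous map $(a,b)\mapsto b^{-1}a$, and then deduce that the diagonal of $G/H\times G/H$ is closed using that $\pi\times\pi$ is open and surjective with $(\pi\times\pi)^{-1}(\text{diagonal})=R$; closedness of the diagonal is equivalent to the Hausdorff property.
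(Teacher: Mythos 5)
Your argument is correct and is essentially the paper's own: the paper translates everything to the identity, observing that $yHx^{-1}$ is closed and misses $e$, and then invokes Lemma~\ref{lem:prod_nbhd} to produce one neighbourhood $V$ of $e$ with $VxH$ and $VyH$ disjoint, while you apply the continuity of $(a,b)\mapsto b^{-1}a$ directly at the point $(x,y)$ against the open set $G\setminus H$ and separate with $\pi(V_{1})$, $\pi(V_{2})$. This is the same mechanism, merely without factoring through the lemma, and your final bookkeeping step ($v_{1}H=v_{2}H$ iff $v_{2}^{-1}v_{1}\in H$) is carried out correctly.
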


\begin{proof}
Let $xH, yH\in G/H$ be distinct. Then $yHx^{-1}\subseteq G$ is closed and does not contain $e\in G$. Hence, by Lemma~\ref{lem:prod_nbhd}, there is an open neighbourhood $V\subseteq G$ of $e\in G$ with $V^{-1}V\subseteq G\backslash yHx^{-1}$. Then $VxH$ and $VyH$ are disjoint neighbourhoods of $xH\in G/H$ and $yH\in G/H$ respectively.
\end{proof}

\begin{proposition}\label{prop:quot_loccpct}
Let $G$ be a locally compact group and let $H\le G$. Then $G/H$ is locally compact.
\end{proposition}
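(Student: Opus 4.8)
The plan is to exploit that the quotient map $\pi\colon G\to G/H$ is simultaneously continuous and open, so that it carries a compact neighbourhood in $G$ to a compact neighbourhood in $G/H$. Fix a coset $gH\in G/H$. Since $G$ is locally compact, the point $g\in G$ admits a compact neighbourhood $K\subseteq G$; equivalently, one may start from a compact neighbourhood $K_{0}$ of $e$ and translate, setting $K=gK_{0}$, since left multiplication by $g$ is a homeomorphism of $G$. Write $U$ for the interior of $K$, so that $U$ is open, $g\in U$ and $U\subseteq K$.

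First I would check that $\pi(K)$ is compact: this is immediate since $\pi$ is continuous and the continuous image of a compact set is compact. Next I would check that $\pi(K)$ is a neighbourhood of $gH$ in $G/H$: since $\pi$ is open, $\pi(U)$ is open in $G/H$, and $gH=\pi(g)\in\pi(U)\subseteq\pi(K)$, so $\pi(K)$ contains an open neighbourhood of $gH$. Hence $\pi(K)$ is a compact neighbourhood of the arbitrary point $gH$, and $G/H$ is locally compact.

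There is essentially no serious obstacle here; the only point worth stressing is that, in contrast with Proposition~\ref{prop:quot_haus}, no closedness hypothesis on $H$ is required, since the argument uses only that $\pi$ is continuous and open, which holds for every subgroup $H\le G$. Without closedness, however, $G/H$ need not be Hausdorff, so ``compact neighbourhood'' must be read in the bare sense of a set containing an open set containing the point, with no separation assumed. If one preferred to avoid the translation step at the start, one could instead apply $\pi$ to a compact neighbourhood of $e$ to obtain a compact neighbourhood of the identity coset $eH$, and then transport it to an arbitrary coset $gH$ using that left multiplication by $g$ is a homeomorphism of $G/H$.
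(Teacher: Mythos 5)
Your proof is correct, but it is genuinely different from --- and shorter than --- the one in the paper. You simply push a compact neighbourhood $K$ of $g$ forward: $\pi(K)$ is compact because $\pi$ is continuous, and it is a neighbourhood of $gH$ because $\pi$ is open, which settles the matter under the paper's definition of local compactness (every point has a compact neighbourhood); no separation hypothesis is needed anywhere, and your remark that closedness of $H$ plays no role is accurate. The paper instead takes a compact neighbourhood $U$ of $e$, uses Lemma~\ref{lem:prod_nbhd} to find an open $V\ni e$ with $V^{-1}V\subseteq U$, and shows $\overline{\pi(V)}\subseteq\pi(U)$, so that $\overline{\pi(V)}$ is compact as a closed subset of a compact set. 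What that extra work buys is a compact neighbourhood that is the \emph{closure of an open set}, i.e.\ a relatively compact open neighbourhood of the identity coset --- the stronger form of local compactness highlighted in the preliminaries as equivalent in the Hausdorff case. Your argument delivers a compact neighbourhood but not automatically a closed one (indeed $\pi(K)$ need not be closed if $G/H$ is not Hausdorff), so the two proofs establish the same stated proposition by different means, with the paper's version carrying slightly more information. Your alternative of transporting a neighbourhood of $eH$ by the homeomorphism of left multiplication on $G/H$ is also fine and matches the paper's opening reduction to the point $H\in G/H$.
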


\begin{proof}
It suffices to show that $H\in G/H$ has a compact neighbourhood. Since $G$ is locally compact, there is a compact neighbourhood $U$ of $e\in G$. Let $V$ be as in Lemma~\ref{lem:prod_nbhd}. Then $\pi(V)$ is an open neighbourhood of $H\in G/H$ since $\pi$ is open. We show that $\smash{\overline{\pi(V)}}$ is compact. If $\smash{gH\in\overline{\pi(V)}}$ then $VgH\cap VH\neq\emptyset$ and hence $gH=v_{1}^{-1}v_{2}H$ for some $v_{1},v_{2}\in V$. Thus $\smash{\overline{\pi(V)}}\subseteq\pi(U)$. The latter set is compact since $\pi$ is continuous and hence so is $\smash{\overline{\pi(V)}}\subseteq\pi(U)$.
\end{proof}

We now state a version of Urysohn's Lemma which guarantees the existence of certain compactly supported functions on locally compact Hausdorff spaces. Recall that when $X$ is a topological space, $C_{c}(X)$ denotes the set of continuous, complex-valued functions $f$ on $X$ with compact support $\smash{\mathrm{supp}(f):=\overline{\{x\in X\mid f(x)\neq 0\}}}$. When $f\in C_{c}(X)$ is such that $0\le f(x)\le 1$ for all $x\in X$, $U\subseteq X$ is open and $K\subseteq X$ is compact, write $f\prec U$ if $\supp(f)\subseteq U$ and $K\prec f$ if $f(k)=1$ for all $k\in K$.

\begin{lemma}\label{lem:urysohn_aux}
Let $X$ be a locally compact Hausdorff space. When $K\subseteq X$ is compact and $U\subseteq X$ is open with $K\subseteq U$, there exists an open set $V\subseteq X$ with compact closure such that $K\subseteq V\subseteq\overline{V}\subseteq X$.
\end{lemma}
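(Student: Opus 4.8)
The plan is to reduce the statement to the case of a single point and then reassemble, using compactness of $K$ twice. First I would establish the point version: every $x\in U$ has an open neighbourhood $W$ with compact closure satisfying $\overline W\subseteq U$. Granting this, for each $x\in K$ choose such a $W_x$; since $K\subseteq U$, the sets $W_x$ form an open cover of the compact set $K$, so finitely many $W_{x_1},\dots,W_{x_n}$ already cover $K$. Then $V:=\bigcup_{i=1}^{n}W_{x_i}$ is open with $K\subseteq V$, its closure $\overline V=\bigcup_{i=1}^{n}\overline{W_{x_i}}$ is a finite union of compact sets and hence compact, and $\overline V\subseteq U$. This yields the asserted $K\subseteq V\subseteq\overline V\subseteq X$ (and in fact the sharper $\overline V\subseteq U$, which is what one needs in order to extract, from the compact Hausdorff hence normal space $\overline V$, a function in $C_c(X)$ supported in $U$).

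It remains to prove the point version. I would use local compactness to pick a compact neighbourhood $N$ of $x$, together with an open set $O$ with $x\in O\subseteq N$; note that $N$ is closed, being compact in the Hausdorff space $X$, so $\overline O\subseteq N$. The set $K':=N\setminus U$ is closed in $N$, hence compact, and $x\notin K'$. Invoking the Hausdorff property, for each $y\in K'$ (so $y\neq x$) choose disjoint open sets $A_y\ni x$ and $B_y\ni y$; by compactness of $K'$ finitely many $B_{y_1},\dots,B_{y_m}$ cover $K'$, so that $A:=\bigcap_{j=1}^{m}A_{y_j}$ is an open neighbourhood of $x$ disjoint from the open set $B:=\bigcup_{j=1}^{m}B_{y_j}\supseteq K'$. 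Since $A\cap B=\emptyset$ with $B$ open, $\overline A\cap B=\emptyset$, whence $\overline A\cap K'=\emptyset$. Now I would set $W:=O\cap A$: this is open with $x\in W$, and $\overline W\subseteq\overline O\cap\overline A\subseteq N\cap\overline A$, which is a closed subset of the compact set $N$ and hence compact, and which is contained in $N\setminus K'=N\cap U\subseteq U$. (If $K'=\emptyset$ one simply takes $W=O$.) Thus $\overline W$ is compact and $\overline W\subseteq U$.

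The argument is elementary throughout; the one place that needs care — the main obstacle — is making the closure of the chosen neighbourhood simultaneously compact and contained in $U$. Both are arranged by the same device: intersecting with the open set $O\subseteq N$ confines the closure to the compact set $N$, while intersecting with $A$, whose closure misses $K'=N\setminus U$, prevents the closure from leaving $U$. Beyond this, the only ingredients are the standard separation of a point from a disjoint compact set in a Hausdorff space, the facts that closed subsets of compact sets are compact and finite unions of compact sets are compact, and the two finite-subcover arguments. In effect one re-derives that a locally compact Hausdorff space has a neighbourhood basis of relatively compact open sets, refined so as to respect the prescribed open set $U$, and then globalises from $x$ to $K$.
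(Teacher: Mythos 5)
Your argument is correct, and it delivers the conclusion the lemma actually needs, namely $\overline V\subseteq U$ (the statement's ``$\overline V\subseteq X$'' is evidently a typo for this). Your route, however, is genuinely different from the paper's. You first prove a single-point version --- every $x\in U$ has a relatively compact open neighbourhood $W_x$ with $\overline{W_x}\subseteq U$, obtained by confining $W_x$ to a compact neighbourhood $N$ of $x$ and separating $x$ from the compact set $N\setminus U$ --- and then globalise by covering $K$ with finitely many of the $W_{x_i}$ and taking their union, whose closure is a finite union of compact sets. The paper instead works with $K$ as a whole: it first produces one relatively compact open $W\supseteq K$, then for each $p\in U^{c}$ separates the compact set $K$ from the point $p$ by an open $V_p\supseteq K$ with $p\notin\overline{V_p}$, and finally uses the finite intersection property of the family of compact sets $U^{c}\cap\overline W\cap\overline{V_p}$ to extract finitely many indices and set $V:=W\cap\bigcap_{i}V_{p_i}$. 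The two compactness arguments are in a sense dual: yours runs over $K$ (and, locally, over $N\setminus U$), while the paper's runs over $U^{c}$ via an empty intersection of compacta. Your version has the side benefit of making explicit that a locally compact Hausdorff space has a neighbourhood basis of relatively compact open sets refined by any prescribed open set, a reusable fact; the paper's avoids the two-level reduction and is marginally shorter. Either is a complete proof.
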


\begin{proof}
By compactness of $K$ and local compactness of $X$, there is a relatively compact open set $W$ containing $K$. Using once more that $K$ is compact, and that $X$ is Hausdorff, there is for every $p\in U^{c}$ an open set $V_{p}$ containing $K$ such that $p\not\in\overline{V}_{p}$. Then $\smash{(U^{c}\cap\overline{W}\cap\overline{V}_{p})_{p\in C}}$ is a family of compact sets with empty intersection. Hence there are $p_{1},\ldots,p_{n}\in U^{c}$ such that $\smash{\bigcap_{i=1}^{n}U^{c}\cap\overline{W}\cap\overline{V}_{p_{i}}}$ is empty as well. Set $V:=W\cap\bigcap_{i=1}^{n}V_{p}$.
\end{proof}

\begin{lemma}[Urysohn]\label{lem:urysohn}
Let $X$ be a locally compact Hausdorff space. When $K\subseteq X$ is compact and $U\subseteq X$ is open such that $K\subseteq U$, then there exists $f\in C_{c}(G)$ satisfying $K\prec f\prec U$.
\end{lemma}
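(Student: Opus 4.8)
The plan is to reduce the statement to the construction of a function supported in a relatively compact set squeezed between $K$ and $U$, and then to run the classical dyadic-interpolation construction, with Lemma~\ref{lem:urysohn_aux} supplying the repeated interpolation step.

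First, apply Lemma~\ref{lem:urysohn_aux} to the pair $(K,U)$ to obtain an open set $V$ with compact closure such that $K\subseteq V\subseteq\overline{V}\subseteq U$. It then suffices to produce $f\in C_c(X)$ with $0\le f\le 1$, with $f\equiv 1$ on $K$, and with $f\equiv 0$ on $X\setminus V$: such an $f$ satisfies $\{x\in X\mid f(x)\neq 0\}\subseteq V$, hence $\supp(f)\subseteq\overline{V}\subseteq U$ is compact, so indeed $f\in C_c(X)$ and $K\prec f\prec U$.

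Second, I would construct, by induction on $n$, relatively compact open sets $U_t$ indexed by the dyadic rationals $t=k/2^n\in[0,1]$, subject to $U_0=V$, $K\subseteq U_1$, and the nesting condition that $\overline{U_t}\subseteq U_s$ whenever $s<t$. For the base case, Lemma~\ref{lem:urysohn_aux} applied to $(K,V)$ yields $U_1$ with $K\subseteq U_1\subseteq\overline{U_1}\subseteq V=U_0$. Passing from level $n$ to level $n+1$, each newly appearing dyadic $t=(2j+1)/2^{n+1}$ lies strictly between its already-defined neighbours $a<t<b$, for which $\overline{U_b}\subseteq U_a$ holds by the inductive hypothesis; applying Lemma~\ref{lem:urysohn_aux} to the pair $(\overline{U_b},U_a)$ produces $U_t$ with $\overline{U_b}\subseteq U_t\subseteq\overline{U_t}\subseteq U_a$, and a short check confirms that the full nesting condition persists. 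Now define $f\colon X\to[0,1]$ by $f(x):=\sup\{t\mid x\in U_t\}$, with the convention $\sup\emptyset=0$. Then $0\le f\le 1$; since $K\subseteq U_t$ for every $t$ we get $f\equiv 1$ on $K$; and since $U_t\subseteq U_0=V$ for all $t>0$, every $x\notin V$ lies in no $U_t$ with $t>0$, so $f(x)=0$. Thus $f$ has all the required properties except, a priori, continuity.

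Third, continuity: it is enough to see that the preimages of the subbasic rays are open. One has $f^{-1}\big((a,\infty)\big)=\bigcup_{t>a}U_t$ directly from the definition of the supremum, which is open. For $f^{-1}\big((-\infty,a)\big)$ I would show it equals $\bigcup_{t<a}\big(X\setminus\overline{U_t}\big)$: if $x\notin\overline{U_t}$ for some dyadic $t<a$, then the nesting condition gives $U_s\subseteq\overline{U_t}$ for all dyadic $s>t$ and also $x\notin U_t$, so $f(x)\le t<a$; conversely, if $f(x)<a$, choose dyadic rationals $f(x)<s<t<a$, whence $x\notin U_s$ and so, since $\overline{U_t}\subseteq U_s$, $x\notin\overline{U_t}$. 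As these rays generate the topology of $\mathbb{R}$, $f$ is continuous. I expect the bookkeeping of the dyadic induction — in particular verifying that every single application of Lemma~\ref{lem:urysohn_aux} preserves the entire nesting condition — together with the careful use of density of dyadic rationals in the second preimage identity, to be the only delicate points; the reduction step and the identification of the support are routine.
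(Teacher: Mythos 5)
Your proposal is correct and follows essentially the same route as the paper: both build a nested family of relatively compact open sets indexed by a dense set of rationals in $[0,1]$ by repeatedly invoking Lemma~\ref{lem:urysohn_aux}, and then define $f$ as a supremum/infimum over the indices. The only differences are cosmetic --- you index by dyadic rationals and induct on the level rather than enumerating all of $\bbQ\cap[0,1]$, and your continuity check via the preimages $f^{-1}((a,\infty))=\bigcup_{t>a}U_{t}$ and $f^{-1}((-\infty,a))=\bigcup_{t<a}(X\setminus\overline{U_{t}})$ is, if anything, spelled out more carefully than the paper's.
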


\begin{proof}
Let $r:\bbN_{0}\to\bbQ\cap[0,1]$ be a bijection with $r(0)=0$ and $r(1)=1$. Using Lemma~\ref{lem:urysohn_aux}, pick open sets $U_{r(1)}$ and $U_{r(0)}$ with compact closure such that $\smash{K\subseteq U_{r(1)}\subseteq\overline{U}_{r(1)}\subseteq U_{r(0)}\subseteq\overline{U}_{r(0)}\subseteq U}$. Then, by induction on $n\in\bbN_{0}$ and using Lemma~\ref{lem:urysohn_aux}, construct open sets $U_{r(n)}$ with compact closure such that for all $s,t\in\bbQ\cap[0,1]$ with $s>r(n)>t$ we have $\smash{\overline{V}_{s}\subseteq V_{r(n)}\subseteq\overline{V}_{r(n)}\subseteq V_{t}}$. Given $\alpha\in[0,1]$, set $U_{\alpha}:=\bigcup_{t\in\bbQ\cap[\alpha,1]}U_{t}$ and define
\begin{displaymath}
 \displaybump f:X\to\bbR,\ x\mapsto\begin{cases}1 & x\in U_{1} \\ \inf\{\alpha\in[0,1]\mid x\in U_{\alpha}\} & x\not\in U_{1}\end{cases}.
\end{displaymath}
For continuity, let $x\in X$ and $0<\delta<\varepsilon$. Then $x\in U_{f(x)-\varepsilon-\delta}\backslash\overline{U}_{f(x)+\varepsilon}\subseteq f^{-1}((f(x)-\varepsilon,f(x)+\varepsilon))$, where $U_{\alpha}:=X$ for $\alpha<0$ and $U_{\alpha}:=\emptyset$ for $\alpha>1$. Overall, $f\in C_{c}(G)$ and $K\prec f\prec U$.
\end{proof}

We also need the notion of uniform continuity for functions on topological groups (which comes from giving the group the structure of a uniform space). Let $G$ be a topological group. A function $f:G\to\bbC$ is \emph{uniformly continuous on the left (right)} if for all $\varepsilon>0$ there is an open neighbourhood $U$ of $e\in G$ such that for all $x\in G$ and $g\in U$ we have $\vert f(gx)-f(x)\vert<\varepsilon$ $(\vert f(xg)-f(x)\vert<\varepsilon)$.

\begin{proposition}\label{prop:unif_cont}
Let $G$ be a locally compact Hausdorff group. Then any $f\in C_{c}(G)$ is uniformly continuous on the left and right.
\end{proposition}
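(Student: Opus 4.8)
The plan is to fix $f \in C_c(G)$ with compact support $K := \supp(f)$ and prove uniform continuity on the left; the right-hand statement is symmetric (or follows by applying the left statement to $x \mapsto f(x^{-1})$, whose support is $K^{-1}$, also compact). Fix $\varepsilon > 0$. The idea is the classical one: pointwise continuity gives, for each $x$, a neighbourhood on which $f$ varies by less than $\varepsilon/2$; compactness of $K$ (suitably enlarged) then lets us extract a single neighbourhood $U$ of $e$ that works uniformly, using the continuity of multiplication to "halve" neighbourhoods via Lemma~\ref{lem:prod_nbhd}.

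In detail, I would first record that it suffices to control $|f(gx) - f(x)|$ for $x$ ranging over a fixed compact set. Using Lemma~\ref{lem:urysohn_aux}, choose a relatively compact open set $W$ with $K \subseteq W \subseteq \overline{W}$, and let $L := \overline{W}$, a compact neighbourhood of $K$. If $g$ is close enough to $e$ that $gx$ and $x$ both lie outside $W$ whenever $x \notin L$ — this can be arranged since one needs $gK \subseteq W$, i.e. $g$ in a suitable neighbourhood obtained by covering $K$ — then $|f(gx) - f(x)| = 0$ trivially for $x \notin L$. So the real work is uniform control over $x \in L$. For each $x \in L$, by continuity of $f$ at $x$ there is an open neighbourhood $N_x$ of $e$ with $|f(hx) - f(x)| < \varepsilon/2$ for all $h \in N_x$; by Lemma~\ref{lem:prod_nbhd} pick an open neighbourhood $V_x$ of $x$ with $V_x^{-1} V_x \subseteq N_x$, equivalently $x^{-1} V_x \cdot (x^{-1} V_x)^{-1}$-type containments — more precisely I want $V_x x^{-1}$ to be a neighbourhood of $e$ with $(V_x x^{-1})$ small enough that $V_x x^{-1} \cdot x = V_x$ and left-translates stay controlled. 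The sets $\{V_x : x \in L\}$ cover $L$; extract a finite subcover $V_{x_1}, \dots, V_{x_n}$ and set $U := \bigcap_{i=1}^n (V_{x_i} x_i^{-1}) \cap (\text{the neighbourhood handling } x \notin L)$, an open neighbourhood of $e$.

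Then for $g \in U$ and any $x \in L$: pick $i$ with $x \in V_{x_i}$, so $x = v x_i^{-1} \cdot x_i$-style, and write $x = w x_i$ where $w \in V_{x_i} x_i^{-1}$; since $g \in V_{x_i} x_i^{-1}$ as well, both $g x_i^{-1}$-translates... the clean way is: $x \in V_{x_i}$ and $gx \in U \cdot V_{x_i} \subseteq V_{x_i} x_i^{-1} V_{x_i}$, so both $x$ and $gx$ lie in $V_{x_i} x_i^{-1} V_{x_i}$, whence $x_i^{-1} x, x_i^{-1}(gx) \in (x_i^{-1}V_{x_i})(x_i^{-1}V_{x_i})$; arranging $V_{x_i}^{-1} V_{x_i} \subseteq N_{x_i}$ gives $|f(x) - f(x_i')| < \varepsilon/2$ and $|f(gx) - f(x_i')| < \varepsilon/2$ for the appropriate base point, hence $|f(gx) - f(x)| < \varepsilon$ by the triangle inequality. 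Combining with the case $x \notin L$ (where the difference is $0$) yields uniform continuity on the left.

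The main obstacle is purely bookkeeping: one is working with left translations $g \mapsto gx$ but Lemma~\ref{lem:prod_nbhd} naturally produces neighbourhoods of the form $V^{-1}V$ around $e$, so one must be careful to translate the neighbourhoods $V_x$ of the point $x$ into neighbourhoods $V_x x^{-1}$ of $e$ and check that the finite intersection $U$ of these still does the job for every $x \in L$ via the subcover — in particular that $g \in U$ forces $gx$ to stay inside the "doubled" set $V_{x_i} x_i^{-1} V_{x_i}$ around which $f$ is nearly constant. No single step is deep; the care is in matching left-multiplication with the symmetric $V^{-1}V$ form of the lemma.
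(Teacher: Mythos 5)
Your proposal is correct in substance and follows essentially the same route as the paper: pointwise continuity yields neighbourhoods $N_{x}$ of $e$, Lemma~\ref{lem:prod_nbhd} ``halves'' them, compactness extracts a finite subcover, and a triangle inequality through the base points $x_{i}$ concludes. Two comments. First, the bookkeeping you flag as the main obstacle is resolved in the paper by keeping everything at the identity: from Lemma~\ref{lem:prod_nbhd} (applied at $e$, then intersecting with the inverse) take a \emph{symmetric} open $W_{x}\ni e$ with $W_{x}^{2}\subseteq N_{x}$, cover the compact set by the translates $W_{x}x$, and set $U=\bigcap_{i}W_{x_{i}}$; if $x\in W_{x_{i}}x_{i}$ and $g\in U$ then $xx_{i}^{-1}\in W_{x_{i}}\subseteq N_{x_{i}}$ and $gxx_{i}^{-1}\in W_{x_{i}}^{2}\subseteq N_{x_{i}}$, which is exactly what the defining property of $N_{x_{i}}$ (control of $f(hx_{i})$ for $h\in N_{x_{i}}$) asks for. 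As literally written, your containment $V_{x_{i}}^{-1}V_{x_{i}}\subseteq N_{x_{i}}$ only places $x_{i}^{-1}x$, not $xx_{i}^{-1}$, into $N_{x_{i}}$ --- the wrong side for left translation --- so you should replace it by $(V_{x_{i}}x_{i}^{-1})^{2}\subseteq N_{x_{i}}$, i.e.\ take $V_{x_{i}}=W_{x_{i}}x_{i}$ as above; this is a one-line repair, not a conceptual gap. Second, your treatment of $x\notin\supp f$ (enlarge $K$ to a compact $L=\overline{W}$ and shrink $U$ so that $g^{-1}K\subseteq L$) is a genuine, and perfectly valid, variant: the paper instead avoids the extra covering argument by a symmetry trick, namely if $x\notin\supp f$ but $y:=gx\in\supp f$ then $\vert f(gx)-f(x)\vert=\vert f(y)-f(g^{-1}y)\vert$ with $y\in\supp f$ and $g^{-1}\in U$, reducing to the case already handled. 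Your route is the more common textbook one; the paper's is shorter but leans once more on the symmetry of $U$.
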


\begin{proof}
We prove that $f$ is uniformly continuous on the left. Uniform continuity on the right can be handled analogously. Let $\varepsilon>0$. By continuity of $f$, there is for each $x\in\supp f$ an open neighbourhood $U_{x}$ of $e\in G$ such that $\vert f(gx)-f(x)\vert<\varepsilon/2$ for all $g\in U_{x}$. For every $U_{x}$ $(x\in G)$, pick a symmetric open neighbourhood $V_{x}$ of $e\in G$ such that $V_{x}^{2}\subseteq U_{x}$ using Lemma \ref{lem:prod_nbhd}. Since $\supp f$ is compact, finitely many of the sets $V_{x}x$ $(x\in\supp f)$ cover $\supp f$, say $(V_{x_{k}}x_{k})_{k=1}^{n}$. Define $V=\bigcap_{k=1}^{n}V_{k}$. Then for all $x\in\supp f$ and for all $g\in V$ we have
\begin{displaymath}
  \vert f(gx)-f(x)\vert\le\vert f(gx)-f(x_{k})\vert+\vert f(x_{k})-f(x)\vert<\frac{\varepsilon}{2}+\frac{\varepsilon}{2}=\varepsilon
\end{displaymath}
where $k\in\{1,\ldots, n\}$ is chosen such that $x\in V_{x_{k}}x_{k}$. If $x\notin\supp f$ then for every $g\in V$ either $gx\notin\supp f$, in which case the above inequality is trivial, or $gx\in\supp f$ and we set $y:=gx$. Then $\vert f(gx)-f(x)\vert=\vert f(g^{-1}y)-f(y)\vert$ where $y\in\supp f$ and $g^{-1}\in V$; we may then argue as before.
\end{proof}

Finally, the following facts are useful in various places.

\begin{proposition}\label{prop:topgrp_mult}
Let $G$ be a topological group and $A,B\subseteq G$. If $A$ and $B$ are compact, then $AB$ is compact. If either $A$ or $B$ is open, then $AB$ is open.
\end{proposition}

\begin{proof}
If $A$ and $B$ are compact, then so is $AB$ as the image of the compact set $(A,B)$ under the continuous multiplication map $G\times G\to G$. If either $A$ or $B$ is open, then $AB$ is open as a union of open sets since
$\bigcup_{a\in A}aB=AB=\bigcup_{b\in B}Ab$.
\end{proof}

\begin{proposition}\label{prop:quot_cpct}
Let $G$ be a locally compact Hausdorff group and let $H$ be a subgroup of $G$. Further, let $C\subseteq G/H$ be compact. Then there exists a compact set $K\subseteq G$ such that $\pi(K)\supseteq C$.
\end{proposition}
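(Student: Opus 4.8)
The plan is to cover $C$ by finitely many translates of the image of a fixed relatively compact neighbourhood of the identity, and then take $K$ to be the corresponding finite union of translated closures.

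First I would invoke local compactness of $G$ (in the Hausdorff setting) to fix an open neighbourhood $U$ of $e\in G$ with compact closure $\overline{U}$. Since $\pi\colon G\to G/H$ is open, $\pi(U)$ is an open neighbourhood of $H=\pi(e)$ in $G/H$; and for every $g\in G$ the set $\pi(gU)$ is open and contains $\pi(g)=gH$, either because left multiplication by $g$ is a homeomorphism of $G/H$ with $g\pi(U)=\pi(gU)$, or simply because $gU$ is open and $\pi$ is open.

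Next, for each $c\in C$ choose $g_{c}\in G$ with $\pi(g_{c})=c$, which is possible since $\pi$ is surjective. Then $c\in\pi(g_{c}U)$, so $\{\pi(g_{c}U)\mid c\in C\}$ is an open cover of the compact set $C$. Extracting a finite subcover $\pi(g_{c_{1}}U),\dots,\pi(g_{c_{n}}U)$, I set
\[
K:=\bigcup_{i=1}^{n}g_{c_{i}}\overline{U}.
\]
Each $g_{c_{i}}\overline{U}$ is compact, either as the continuous image of the compact set $\overline{U}$ under left multiplication by $g_{c_{i}}$, or by Proposition~\ref{prop:topgrp_mult} applied to $\{g_{c_{i}}\}$ and $\overline{U}$; hence $K$ is compact as a finite union of compact sets. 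Finally, $\pi(K)=\bigcup_{i=1}^{n}\pi(g_{c_{i}}\overline{U})\supseteq\bigcup_{i=1}^{n}\pi(g_{c_{i}}U)\supseteq C$, which is exactly the required inclusion.

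There is no substantial obstacle here; the only points that need care are that it is the \emph{openness} of $\pi$ (recorded in the text preceding Proposition~\ref{prop:quot_haus}) that upgrades the family $\{\pi(g_{c}U)\}$ to an open cover of $C$ so that compactness of $C$ can be used, and that enlarging $U$ to its compact closure $\overline{U}$ — permissible precisely because $U$ was chosen relatively compact — is what renders the finite union $K$ compact without spoiling the covering property $\pi(K)\supseteq C$.
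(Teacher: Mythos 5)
Your proof is correct and follows essentially the same route as the paper: cover by relatively compact open sets, use openness of $\pi$ and compactness of $C$ to extract a finite subcover, and take $K$ to be the union of the corresponding closures. The only cosmetic difference is that you build the cover of $G$ from translates $g_{c}U$ of a single relatively compact neighbourhood of $e$, whereas the paper simply takes an arbitrary cover of $G$ by relatively compact open sets; the argument is otherwise identical.
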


\begin{proof}
We may cover $G$ by relatively compact open sets $U_{i}$ $(i\in I)$. Since $\pi$ is open and $C\subseteq G/H$ is compact, finitely many of the $\pi(U_{i})$ $(i\in I)$ cover $C$, say $(\pi(U_{k}))_{k=1}^{n}$. Then $K=\bigcup_{k=1}^{n}\overline{U_{k}}$ serves.
\end{proof}

\subsection{Measure Theory}

We now review some basic measure theory in order to give the definition of a Haar measure and some first properties.

\vspace{0.2cm}
Let $X$ be a non-empty set. A \emph{$\sigma$-algebra on $X$} is a set $\calM\subseteq\calP(X)$ of subsets of $X$ which contains the empty set and is closed under taking both complements and countable unions. A pair $(X,\calM)$ where $X$ is a set and $\calM$ is a $\sigma$-algebra on $X$ is a \emph{measurable space}; the sets $E\in\calM$ are \emph{measurable}. Given two measurable spaces $(X,\calM)$ and $(Y,\calN)$, a map $f:X\to Y$ is \emph{measurable} if $f^{-1}(F)\in\calM$ for all $F\in\calN$. For example, let $X$ and $Y$ be topological spaces equipped with their \emph{Borel $\sigma$-algebras} $\calB(X)$ and $\calB(Y)$ respectively, i.e. the $\sigma$-algebra generated by the open sets. Then any continuous map from $X$ to $Y$ is measurable. \emph{We shall always equip topological spaces with their Borel $\sigma$-algebra}.

A \emph{measure} on a measurable space $(X,\calM)$ is a map $\mu:\calM\to\bbR_{\ge 0}\cup\{\infty\}$ which satisfies $\mu(\emptyset)=0$ and is countably additive: whenever $(E_{n})_{n\in\bbN}$ is a sequence of pairwise disjoint measurable sets then $\mu(\bigcup_{n\in\bbN}E_{n})=\sum_{n=1}^{\infty}\mu(E_{n})$. A triple $(X,\calM,\mu)$ where $(X,\calM)$ is a measurable space and $\mu$ is a measure on $(X,\calM)$ is a \emph{measure space}. A set of measure zero is a \emph{null set} and its complement~\emph{conull}.

If $(X,\calM,\mu)$ is a measure space, $(Y,\calN)$ a measurable space and $\varphi:X\to Y$ a measurable map, then $\varphi_{\ast}\mu:\calN\to\bbR_{\ge 0}\cup\{\infty\},\ F\mapsto \mu(\varphi^{-1}(F))$ is the \emph{push-forward measure on $(Y,\calN)$ under $\varphi$}.

The category of measure spaces is designed to allow for the following notion of integration of certain measurable, complex-valued functions on $(X,\calM,\mu)$.

\vspace{0.2cm}
\begin{enumerate}[(1)]
	\item When $\chi_{E}$ is the characteristic function of a measurable set $E\in\calM$, define
		\begin{displaymath}
			\int_{X}\chi_{E}(x)\ \mu(x)=\mu(E).
		\end{displaymath}
	\item When $f=\sum_{i=1}^{n}\lambda_{i}\chi_{E_{i}}$ is a positive, real linear combination of characteristic functions of measurable sets, a \emph{simple function}, define
		\begin{displaymath}
			\int_{X}f(x)\ \mu(x)=\sum_{i=1}^{n}\lambda_{i}\int_{X}\chi_{E_{i}}(x)\ \mu(x).
		\end{displaymath}
	\item When $f:X\to\bbR$ is measurable and non-negative, define
		\begin{displaymath}
			\int_{X}f(x)\ \mu(x)=\sup_{\varphi}\int_{X}\varphi(x)\ \mu(x)
		\end{displaymath}
		where $\varphi$ ranges over all real-valued simple functions on $X$ with $0\le\varphi\le f$.
	\item When $f:X\to\bbR$ is measurable, decompose
		\begin{displaymath}
			f=f_{+}-f_{-} \quad\text{where}\quad f_{\pm}(x)=\max(\pm f(x),0).
		\end{displaymath}
		When $\int_{X}|f(x)|\ \mu(x)<\infty$, define
		\begin{displaymath}
			\int_{X}f(x)\ \mu(x)=\int_{X}f_{+}(x)\ \mu(x)-\int_{X}f_{-}(x)\ \mu(x).
		\end{displaymath}
	\item When $f:X\to\bbC$ is measurable and \emph{integrable}, i.e. $\int_{X}|f(x)|\ \mu(x)<\infty$, define
		\begin{displaymath}
			\int_{X}f(x)\ \mu(x)=\int_{X}\text{Re}(f(x))\ \mu(x)+i\int_{X}\text{Im}(f(x))\ \mu(x).
		\end{displaymath}
\end{enumerate}
\vspace{0.2cm}

The vector space of equivalence classes of measurable, integrable complex-valued functions on~$X$ modulo equality on a conull set is denoted by $L^{1}(X,\mu)$. Integration constitutes a linear map from $L^{1}(X,\mu)$ to $\bbC$. There is the following change of variables formula.

\begin{proposition}[Change of variables]\label{prop:chan_var}
Let $(X,\calM,\mu)$ be a measure space, $(Y,\calN)$ a measurable space and $\varphi:X\to Y$ a measurable. For every measurable function $f:Y\to\bbC$ and $F\in\calN$ we have
\begin{displaymath}
\int_{F}f(y)\ \varphi_{\ast}\mu(y)=\int_{\varphi^{-1}(F)}f(\varphi(x))\ \mu(x).
\end{displaymath}
whenever either of the two expressions is defined.
\end{proposition}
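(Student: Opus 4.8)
The plan is to establish the identity by the usual extension procedure of measure theory, first reducing to the case $F = Y$ and then working up from indicator functions to arbitrary integrable functions. Throughout, the two elementary observations $\chi_E\circ\varphi = \chi_{\varphi^{-1}(E)}$ for $E\in\calN$ and $(f\circ\varphi)_{\pm} = f_{\pm}\circ\varphi$ will do most of the work, together with the facts that $f\circ\varphi$ is measurable as a composition of measurable maps and that $\varphi^{-1}(f^{-1}(B)) = (f\circ\varphi)^{-1}(B)$ for every Borel set $B$.

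For the reduction, note that for any measurable $g\colon Y\to\bbC$ one has $\int_F g(y)\,\varphi_{\ast}\mu(y) = \int_Y \chi_F(y)\,g(y)\,\varphi_{\ast}\mu(y)$, and since $\chi_F(\varphi(x)) = \chi_{\varphi^{-1}(F)}(x)$ also $\int_{\varphi^{-1}(F)} g(\varphi(x))\,\mu(x) = \int_X (\chi_F\,g)(\varphi(x))\,\mu(x)$. Hence the assertion for $f$ and $F$ is precisely the assertion for the measurable function $\chi_F f$ and $F = Y$, so it suffices to prove $\int_Y f(y)\,\varphi_{\ast}\mu(y) = \int_X f(\varphi(x))\,\mu(x)$ for measurable $f\colon Y\to\bbC$.

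Now proceed by cases. If $f = \chi_E$ with $E\in\calN$, the left-hand side is $\varphi_{\ast}\mu(E) = \mu(\varphi^{-1}(E))$ by definition of the push-forward, and the right-hand side is $\int_X \chi_{\varphi^{-1}(E)}(x)\,\mu(x) = \mu(\varphi^{-1}(E))$; for non-negative simple $f$ the identity then follows by linearity on simple functions. For non-negative measurable $f\colon Y\to\bbR$, take the canonical increasing sequence $(s_n)_{n\in\bbN}$ of non-negative simple functions with $s_n\uparrow f$ pointwise; by the preimage identity above, $(s_n\circ\varphi)_{n\in\bbN}$ is the canonical sequence for $f\circ\varphi$, so the monotone convergence theorem together with the simple-function case yields $\int_Y f(y)\,\varphi_{\ast}\mu(y) = \lim_n \int_Y s_n(y)\,\varphi_{\ast}\mu(y) = \lim_n \int_X (s_n\circ\varphi)(x)\,\mu(x) = \int_X f(\varphi(x))\,\mu(x)$. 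Applying this with $|f|$ in place of $f$ shows $\int_Y|f(y)|\,\varphi_{\ast}\mu(y) = \int_X|f(\varphi(x))|\,\mu(x)$, so one side of the proposition is defined exactly when the other is. For measurable integrable $f\colon Y\to\bbR$, use $(f\circ\varphi)_{\pm} = f_{\pm}\circ\varphi$ and apply the previous step to $f_+$ and $f_-$ separately, then subtract; finally, for measurable integrable $f\colon Y\to\bbC$, split into real and imaginary parts and use linearity of the integral.

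The only step that is more than bookkeeping is the passage from simple functions to general non-negative measurable functions: a simple function on $X$ need not factor as $s\circ\varphi$, so one cannot manipulate the defining suprema on $X$ directly and must instead approximate $f$ on $Y$ and transport the approximation through $\varphi$ — which is exactly what the preimage identity and the monotone convergence theorem permit. I expect this to be the main (and essentially only) obstacle.
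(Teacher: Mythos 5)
Your argument is correct: the reduction to $F=Y$ via $\chi_F f$, the identity $\chi_E\circ\varphi=\chi_{\varphi^{-1}(E)}$ for the indicator case, and the climb through simple, non-negative (via monotone convergence), real integrable and complex functions is the standard and complete way to prove this. Note, however, that the paper offers no proof of Proposition~\ref{prop:chan_var} at all --- it is recalled as a standard fact from measure theory in the preliminaries --- so there is no argument of the paper's to compare yours against; your proof simply supplies the omitted details, and your closing remark correctly identifies the one point (that simple functions on $X$ need not factor through $\varphi$, forcing the approximation to be done on $Y$ and transported) where the argument is more than bookkeeping.
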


Next, we recall Fubini's Theorem which reduces integrating over a product space to integrating over the factors. Let $(X,\calM,\mu)$ and $(Y,\calN,\nu)$ be measure spaces. Then so is $(X\times Y,\calM\times\calN,\mu\times\nu)$ where $(\mu\times\nu)$ is defined by $(\mu\times\nu)(E,F):=\mu(E)\nu(F)$ for all $(E,F)\in\calM\times\calN$. Recall that $(X,\calM,\mu)$ is \emph{$\sigma$-finite} if $X$ is a countable union of sets of finite measure.

\begin{theorem}[Fubini]\label{thm:fubini}
Let $(X,\calM,\mu)$ and $(Y,\calN,\nu)$ be $\sigma$-finite measure spaces. Further, let $f:X\times Y\to\bbC$ be measurable with $\int_{X}\int_{Y}\vert f(x,y)\vert\ \nu(y)\ \mu(x)<\infty$. Then $f\in L^{1}(X\times Y,\mu\times\nu)$ and
\begin{displaymath}
  \int_{X}\int_{Y}f(x,y)\ \nu(y)\ \mu(x)=\int_{X\times Y}f(x,y)\ (\mu\times\nu)(x,y)=\int_{Y}\int_{X}f(x,y)\ \mu(x)\ \nu(y).
\end{displaymath}
\end{theorem}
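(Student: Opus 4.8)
The plan is to establish the identity in stages mirroring the construction of the integral recalled above: first for characteristic functions, then for non-negative measurable functions, and finally for complex integrable ones. The crux is the case $f=\chi_{E}$ with $E\in\calM\times\calN$. Here I would show that for every $x\in X$ the section $E_{x}:=\{y\in Y\mid(x,y)\in E\}$ lies in $\calN$, that $x\mapsto\nu(E_{x})$ is $\calM$-measurable, and that $\int_{X}\nu(E_{x})\ \mu(x)=(\mu\times\nu)(E)$, together with the symmetric statement obtained by exchanging the roles of $X$ and $Y$; in particular this is what makes $\mu\times\nu$ a well-defined measure on all of $\calM\times\calN$. To prove it, let $\calD$ be the collection of all $E\in\calM\times\calN$ for which these assertions hold. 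One checks directly that $\calD$ contains every measurable rectangle $A\times B$ with $A\in\calM$, $B\in\calN$, and that such rectangles form a $\pi$-system generating $\calM\times\calN$. Using $\sigma$-finiteness to reduce to the case of finite measures --- write $X=\bigcup_{n}X_{n}$, $Y=\bigcup_{n}Y_{n}$ with $X_{n}\uparrow X$, $Y_{n}\uparrow Y$ and $\mu(X_{n}),\nu(Y_{n})<\infty$, prove the claim for $E\cap(X_{n}\times Y_{n})$, and pass to the limit --- one verifies that $\calD$ is a Dynkin system. The monotone class theorem then gives $\calD=\calM\times\calN$.

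Granting this, the three iterated expressions agree for $f=\chi_{E}$, hence by linearity for every non-negative simple function on $X\times Y$. For arbitrary measurable $f:X\times Y\to[0,\infty]$ I would pick simple functions $0\le\varphi_{k}\uparrow f$ pointwise; for each fixed $x$, monotone convergence gives $\int_{Y}\varphi_{k}(x,y)\ \nu(y)\uparrow\int_{Y}f(x,y)\ \nu(y)$, so the latter is a measurable function of $x$ as a pointwise limit of such, and a second application of monotone convergence in the variable $x$ shows that all three iterated integrals equal $\int_{X\times Y}f\ (\mu\times\nu)$. This is Tonelli's theorem and needs no finiteness hypothesis on $f$.

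Finally, for measurable $f:X\times Y\to\bbC$, applying the previous step to $|f|$ turns the hypothesis $\int_{X}\int_{Y}|f(x,y)|\ \nu(y)\ \mu(x)<\infty$ into $\int_{X\times Y}|f|\ (\mu\times\nu)<\infty$, i.e. $f\in L^{1}(X\times Y,\mu\times\nu)$. The same computation applied to the four non-negative functions $\text{Re}(f)_{\pm}$, $\text{Im}(f)_{\pm}$, each dominated by $|f|$ and hence integrable, shows that $y\mapsto f(x,y)$ is $\nu$-integrable for $\mu$-almost every $x$, that $x\mapsto\int_{Y}f(x,y)\ \nu(y)$ is defined a.e. and integrable, and --- recombining the four pieces by linearity --- that its integral equals $\int_{X\times Y}f\ (\mu\times\nu)$; the symmetric argument yields the other equality. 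The main obstacle is the measurable-set case of the first paragraph: one genuinely needs the monotone class theorem to pass from rectangles to all of $\calM\times\calN$, and one needs $\sigma$-finiteness precisely to make the Dynkin-system closure properties --- stability under proper differences and under increasing unions --- available. Everything downstream is the routine ``simple $\to$ non-negative $\to$ integrable'' ladder already implicit in the definition of the integral.
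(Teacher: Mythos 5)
The paper does not prove this statement: Fubini's theorem is recalled in the preliminaries as standard background, with no argument given, so there is nothing to compare your proof against line by line. On its own merits, your proposal is the standard and correct proof: establish the section lemma for $E\in\calM\times\calN$ via the $\pi$-system of rectangles and Dynkin's theorem, deduce Tonelli for non-negative functions by monotone convergence along simple approximations, and then handle integrable $f$ by splitting into $\mathrm{Re}(f)_{\pm}$ and $\mathrm{Im}(f)_{\pm}$, using Tonelli applied to $\vert f\vert$ both to get $f\in L^{1}(X\times Y,\mu\times\nu)$ and to see that the inner integral is defined for $\mu$-almost every $x$. You also rightly note that this argument is what makes $\mu\times\nu$, which the paper only defines on rectangles, a well-defined measure on all of $\calM\times\calN$. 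One small point of phrasing: the collection $\calD$ of sets satisfying the section identity outright need not be closed under proper differences when the measures are infinite, so the Dynkin-system verification should be carried out for the truncated collections $\calD_{n}$ of sets $E$ with the claim holding for $E\cap(X_{n}\times Y_{n})$, each of which equals $\calM\times\calN$, before passing to the monotone limit in $n$; your sketch gestures at exactly this, so it is a matter of ordering the steps rather than a gap.
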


Measures on topological spaces which appear in practice often satisfy the following additional regularity properties.

\begin{definition}[Radon measure]\label{def:radon_measure}
A \emph{Radon measure} on a topological space $X$ is a measure on $(X,\calB(X))$ which satisfies the following properties:
\begin{enumerate}
	\item[\namedlabel{item:LF}{(LF)}] If $K\subseteq X$ is compact, then $\mu(K)<\infty$. \hfill (locally finite)
	\item[\namedlabel{item:OR}{(OR)}] If $E\subseteq X$ is measurable, then $\mu(E)=\inf\{\mu(U)\mid U\supseteq E, U \text{ open}\}$. \hfill (outer regular)
	\item[\namedlabel{item:IR}{(IR)}] If $U\subseteq X$ is open, then $\mu(U)=\sup\{\mu(K)\mid K\subseteq U, K \text{ compact}\}$. \hfill (inner regular)
\end{enumerate}
\end{definition}

The importance of Radon measures is also due to the following result of Riesz which is often employed to define a measure on a given space in the first place.

\begin{theorem}[Riesz]\label{thm:riesz}
Let $X$ be a locally compact Hausdorff space. Further, let $\lambda:C_{c}(X)\to\bbC$ be a positive, i.e. $\lambda(f)\in[0,\infty)$ whenever $f(x)\in[0,\infty)$ for all $x\in X$, linear functional. Then there exists a unique Radon measure $\mu$ on $X$ such that
\begin{displaymath}
	\lambda(f)=\int_{X}f(x)\ \mu(x) \quad\text{for all}\quad f\in C_{c}(X).
\end{displaymath}
Furthermore, $\mu$ satisfies $\mu(U)=\sup\{\lambda(f)\mid f\prec U\}$ and $\mu(K)=\inf\{T(f)\mid K\prec f\}$ for every open set $U\subseteq X$ and every compact set $K\subseteq X$ respectively.
\end{theorem}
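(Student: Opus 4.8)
The plan is to build the measure $\mu$ by hand out of $\lambda$, check it is Radon, and then verify the integral formula and uniqueness separately. First I would reduce to a \emph{real} functional: by linearity $\lambda$ is determined by its values on real-valued $f\in C_{c}(X)$, on which it takes real values (write $f=f_{+}-f_{-}$ with $f_{\pm}\ge 0$) and is monotone (positivity). Then I would \emph{define} $\mu$ on open sets by $\mu(U):=\sup\{\lambda(f)\mid f\prec U\}$, so that the first ``furthermore'' identity holds by construction, and extend it to arbitrary subsets by $\mu(E):=\inf\{\mu(U)\mid E\subseteq U\text{ open}\}$; monotonicity of the original definition makes this consistent on open sets and builds in outer regularity \ref{item:OR}. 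Using Urysohn's Lemma~\ref{lem:urysohn} I would next establish the second ``furthermore'' identity $\mu(K)=\inf\{\lambda(f)\mid K\prec f\}$ for compact $K$: for ``$\le$'' apply the definition of $\mu$ to the open sets $\{f>\alpha\}\supseteq K$ with $\alpha\uparrow 1$ (so that $g\prec\{f>\alpha\}$ forces $g\le\alpha^{-1}f$); for ``$\ge$'' use \ref{item:OR} to trap $K$ in an open $U$ with $\mu(U)$ close to $\mu(K)$ and take $f$ with $K\prec f\prec U$ from Urysohn. In particular $\mu(K)<\infty$ for compact $K$ (Urysohn together with Lemma~\ref{lem:urysohn_aux} to produce a relatively compact neighbourhood), which is local finiteness \ref{item:LF}.

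The analytic heart of the construction is three facts, all powered by Urysohn's Lemma. (i) \emph{Superadditivity into an open set}: if $U_{1},U_{2}$ are disjoint open subsets of an open set $W$, then $\mu(U_{1})+\mu(U_{2})\le\mu(W)$, because functions $f_{i}\prec U_{i}$ have disjoint supports, so $f_{1}+f_{2}\prec W$ and $\lambda(f_{1})+\lambda(f_{2})\le\mu(W)$. (ii) \emph{Countable subadditivity}, i.e.\ $\mu$ is an outer measure: for $U=\bigcup_{n}U_{n}$ and $f\prec U$, compactness of $\supp f$ lets finitely many $U_{n}$ cover it; a partition of unity subordinate to that finite cover (built from Urysohn) writes $f=\sum_{i}fh_{i}$ with $fh_{i}\prec U_{n(i)}$ and gives $\lambda(f)\le\sum_{n}\mu(U_{n})$, and the case of arbitrary sets follows from the open case via \ref{item:OR}. (iii) \emph{Inner regularity of open sets} \ref{item:IR}: for $f\prec U$ one has $\lambda(f)\le\mu(\supp f)$ with $\supp f\subseteq U$ compact, whence $\mu(U)\le\sup\{\mu(K)\mid K\subseteq U\text{ compact}\}$, the reverse inequality being monotonicity.

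With (i)--(iii) available I would invoke Carath\'eodory's criterion: $\mu$ restricts to a countably additive measure on its $\sigma$-algebra $\calM$ of measurable sets, and $\calM$ contains every open set $U$. Indeed, given a test set $A$ with $\mu(A)<\infty$ and an open $W\supseteq A$ with $\mu(W)<\mu(A)+\varepsilon$, one approximates the open set $W\cap U$ from inside by a compact $C_{1}$ via \ref{item:IR}, notes that $W\setminus C_{1}$ is open and contains $W\setminus U$, approximates \emph{it} from inside by a compact $C_{2}$, and uses disjointness of $C_{1},C_{2}$ inside $W$ together with (i) to obtain $\mu(W\cap U)+\mu(W\setminus U)\le\mu(W)+2\varepsilon$, hence the Carath\'eodory inequality for $U$. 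Thus $\calB(X)\subseteq\calM$, and the restriction of $\mu$ to $\calB(X)$ is a Radon measure: \ref{item:LF} and \ref{item:IR} were checked above, and \ref{item:OR} is built in.

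It remains to prove $\lambda(f)=\int_{X}f(x)\ \mu(x)$ for $f\in C_{c}(X)$ (each such $f$ being $\mu$-integrable since $\mu(\supp f)<\infty$) and uniqueness. By linearity and the real reduction it suffices to show $\lambda(f)\le\int_{X}f(x)\ \mu(x)$ for real $f$, as applying this to $-f$ then yields equality. Here I would fix $\varepsilon>0$, choose $y_{0}<\min f\le\max f<y_{n}$ with consecutive gaps $<\varepsilon$, put $E_{i}=\{y_{i-1}<f\le y_{i}\}\cap\supp f$, choose via \ref{item:OR} and continuity of $f$ open sets $U_{i}\supseteq E_{i}$ with $U_{i}\subseteq\{f<y_{i}+\varepsilon\}$ and $\mu(U_{i})<\mu(E_{i})+\varepsilon/n$, take a partition of unity $h_{i}\prec U_{i}$ with $\sum_{i}h_{i}\equiv 1$ on $\supp f$, and compare the upper estimate $\lambda(f)=\sum_{i}\lambda(h_{i}f)\le\sum_{i}(y_{i}+\varepsilon)\lambda(h_{i})$ (inserting a constant multiple of $\sum_{i}h_{i}$ to keep all coefficients nonnegative) with the lower estimate $\int_{X}f(x)\ \mu(x)\ge\sum_{i}y_{i-1}\mu(E_{i})$, then let $\varepsilon\to 0$. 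This bookkeeping with the partition of unity is the step I expect to be the main obstacle; the rest is soft topology or routine measure theory. Finally, for uniqueness, if $\mu_{1},\mu_{2}$ are Radon measures representing $\lambda$, then for compact $K$ and open $U\supseteq K$ with $\mu_{1}(U)$ near $\mu_{1}(K)$ a Urysohn function $K\prec f\prec U$ gives $\mu_{2}(K)\le\int_{X}f(x)\ \mu_{2}(x)=\lambda(f)=\int_{X}f(x)\ \mu_{1}(x)\le\mu_{1}(U)$; by symmetry $\mu_{1}$ and $\mu_{2}$ agree on compacta, hence on open sets by \ref{item:IR}, hence on all Borel sets by \ref{item:OR}.
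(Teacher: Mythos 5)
The paper does not prove Theorem~\ref{thm:riesz} at all: it is stated as a classical result and used as a black box (consistent with the article's stated scope, which defers such foundational material to the references), so there is no in-paper argument to compare yours against. Judged on its own terms, your proposal is the standard Rudin-style construction and is correct in outline: defining $\mu(U)=\sup\{\lambda(f)\mid f\prec U\}$ on open sets, extending by outer regularity, deriving the compact-set formula, countable subadditivity via partitions of unity, superadditivity on disjoint open sets, inner regularity of open sets, the Carath\'eodory criterion, the staircase estimate for the integral identity, and uniqueness via Urysohn functions are the right steps in the right order, and each of the auxiliary facts you isolate is provable from Lemmas~\ref{lem:urysohn_aux} and~\ref{lem:urysohn}. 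Two spots need a little more care than the sketch shows. First, in the Carath\'eodory step you invoke item (i) for the disjoint \emph{compact} sets $C_{1},C_{2}$, whereas (i) is stated for disjoint \emph{open} sets; you must first separate $C_{1}$ and $C_{2}$ by disjoint open subsets of $W$ (possible since $X$ is Hausdorff and the $C_{i}$ are compact) and then apply (i) together with monotonicity. Second, in the integral identity the upper estimate needs $\sum_{i}\lambda(h_{i})$ to be controlled by $\mu(\supp f)+\varepsilon$ rather than merely by $\sum_{i}\mu(U_{i})$, so the $U_{i}$ should additionally be taken inside a single open set $V\supseteq\supp f$ with $\mu(V)<\mu(\supp f)+\varepsilon$; this is exactly the bookkeeping you flag as the main obstacle, and it is a known finite computation rather than a gap. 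With those two refinements the argument closes.
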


\section{Definition}

In the context of topological groups it is natural to look for measures which are invariant under translation. Such measures always exist for locally compact Hausdorff groups.

\begin{definition}[Haar measure]
Let $G$ be a locally compact Hausdorff group. A \emph{left (right) Haar measure on $G$} is a Radon measure $\mu$ on $(G,\calB(G))$ which is non-zero on non-empty open sets and invariant under left-translation (right-translation):
\begin{enumerate}
	\item[\namedlabel{item:NT}{(NT)}] If $U\subseteq X$ is open and non-empty, then $\mu(U)\gneq 0$. \hfill (non-trivial)
	\item[\namedlabel{item:TI}{(TI)}] For all $E\in\calB(G)$ and $g\in G$: $\mu(gE)=\mu(E)$ ($\mu(Eg)=\mu(E)$). \hfill (translation-invariant)
\end{enumerate}
\end{definition}

\begin{theorem}[Haar, Weil]\label{thm:haar}
Let $G$ be a locally compact Hausdorff group. Then there exists a left (right) Haar measure on $G$ which is unique up to strictly positive scalar multiples.
\end{theorem}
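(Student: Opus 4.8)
The plan is to prove existence and uniqueness of a left Haar measure separately, the right case being symmetric (or obtained by pushing forward under inversion).

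\medskip

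\noindent\textbf{Existence.} By the Riesz representation theorem (Theorem~\ref{thm:riesz}) it suffices to construct a non-zero, positive, left-invariant linear functional $\lambda$ on $C_c(G)$; left-invariance of $\lambda$ translates to left-invariance of the associated Radon measure on compact sets and open sets, and hence on all Borel sets by regularity, while positivity on non-empty open sets follows from non-triviality of $\lambda$. To build $\lambda$ I would use the classical covering/averaging argument. For $f,\varphi\in C_c(G)$ with $\varphi$ not identically zero, define the \emph{Haar covering number} $(f:\varphi)$ to be the infimum of $\sum_{i=1}^n c_i$ over all finite collections $c_i>0$, $s_i\in G$ with $f\le\sum_i c_i\,{}_{s_i}\varphi$, where ${}_s\varphi(x)=\varphi(s^{-1}x)$; such collections exist by compactness of $\supp f$ and Urysohn's Lemma (Lemma~\ref{lem:urysohn}). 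One checks the elementary properties: $(f:\varphi)$ is left-invariant in $f$, subadditive and positively homogeneous in $f$, monotone, satisfies $({}_sf:\varphi)=(f:\varphi)$, and $(f:\varphi)\le(f:\psi)(\psi:\varphi)$. Fixing a reference $f_0$, the normalised ratio $I_\varphi(f):=(f:\varphi)/(f_0:\varphi)$ lies in the compact interval $[(f_0:f)^{-1},(f:f_0)]$, so the net $(I_\varphi)_\varphi$, indexed by shrinking neighbourhoods $\supp\varphi\subseteq V$ of $e$ ordered by reverse inclusion, has a cluster point $\lambda$ in $\prod_f[(f_0:f)^{-1},(f:f_0)]$ by Tychonoff. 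The functional $\lambda$ is automatically left-invariant, positive, monotone, homogeneous and normalised by $\lambda(f_0)=1$, hence non-zero; the only genuine work is proving $\lambda$ is \emph{additive}. The standard trick is to show that for fixed $f,g\in C_c^+(G)$ and $\varepsilon>0$ one has $(f+g:\varphi)\le(f:\varphi)+(g:\varphi)+\varepsilon\,$ once $\supp\varphi$ is small enough: choose $h\in C_c^+(G)$ with $h\equiv1$ on $\supp(f+g)$, perturb to $F=f+g+\delta h$, write $f/F$ and $g/F$ (extended by $0$), use uniform continuity of these quotients (Proposition~\ref{prop:unif_cont}) so that on a small neighbourhood of $e$ they are nearly constant on each translate of $\supp\varphi$, and estimate. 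Passing to the cluster point gives subadditivity $\lambda(f+g)\le\lambda(f)+\lambda(g)$; superadditivity $\lambda(f+g)\ge\lambda(f)+\lambda(g)$ is the easy direction from $(f:\varphi)+(g:\varphi)\ge(f+g:\varphi)$ being false in general but the reverse covering inequality being immediate, so additivity follows. I expect this additivity estimate to be the main obstacle.

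\medskip

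\noindent\textbf{Uniqueness.} Let $\mu,\nu$ be two left Haar measures; I want to show $\nu=c\mu$ for some $c>0$. The cleanest route is via integration against test functions: by Riesz it suffices to show there is a constant $c$ with $\int_G f\,\nu = c\int_G f\,\mu$ for all $f\in C_c(G)$. Fix $g\in C_c(G)$ with $\int_G g\,\mu>0$ and set $c=\int_G g\,\nu/\int_G g\,\mu$. For arbitrary $f\in C_c(G)$, consider the function $(x,y)\mapsto f(x)g(yx)$; it lies in $C_c(G\times G)$, both $\mu$ and $\nu$ are $\sigma$-finite on the relevant supports (compact sets have finite measure and $G$ is covered by countably many translates of a relatively compact neighbourhood once one restricts to the subgroup generated by a neighbourhood of $e$ — it suffices to integrate over an open $\sigma$-compact subgroup containing all supports), so Fubini (Theorem~\ref{thm:fubini}) applies. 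Compute $\int_G\!\int_G f(x)g(yx)\,\mu(x)\,\nu(y)$ two ways: integrating in $x$ first and using left-invariance of $\mu$ (substitute $x\mapsto y^{-1}x$ via Proposition~\ref{prop:chan_var}) gives $\big(\int_G f\,\mu\big)\big(\int_G \widetilde g\,\nu\big)$ where $\widetilde g(y)=g(y^{-1})\cdot$(something)$\,$— more precisely one arranges the manipulation so that integrating $y$ first and using left-invariance of $\nu$ produces $\big(\int_G g\,\nu\big)\cdot\big(\int_G f\,\mu\big)/\big(\int_G g\,\mu\big)$-type terms; carefully bookkeeping the substitutions yields $\int_G f\,\nu\cdot\int_G g\,\mu = \int_G f\,\mu\cdot\int_G g\,\nu$, i.e. $\int_G f\,\nu = c\int_G f\,\mu$. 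Since $f$ was arbitrary, uniqueness of the Riesz correspondence forces $\nu=c\mu$ on all Borel sets.

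\medskip

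\noindent\textbf{Right Haar measures.} If $\mu$ is a left Haar measure, then $\mu^\vee$ defined by $\mu^\vee(E)=\mu(E^{-1})$ is a right Haar measure (inversion is a homeomorphism, hence Borel, and it swaps left and right translation), and conversely; existence and uniqueness for the right case follow immediately.
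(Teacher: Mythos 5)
The paper does not prove this theorem at all: it explicitly defers both existence and uniqueness to Weil \cite{Wei65}, and only sketches existence for three special classes (compact groups via Kakutani--Markov, Lie groups via invariant volume forms, totally disconnected separable groups via van Dantzig) in Remark~\ref{rem:haar_exist}. Your proposal is therefore supplying the general argument the paper chose to omit, and it is the standard Weil/Cartan covering-number construction plus the Fubini uniqueness trick; in outline it is viable, and your handling of the $\sigma$-finiteness issue for Fubini (restricting to an open $\sigma$-compact subgroup containing the supports) is the right fix. Two points, however, need repair before the sketch closes.

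First, you have swapped the easy and hard directions of additivity. The inequality $(f+g:\varphi)\le(f:\varphi)+(g:\varphi)$ is immediate for every $\varphi$ (concatenate coverings) and requires no $\varepsilon$ and no uniform continuity; what the perturbation $F=f+g+\delta h$ and the uniform continuity of $f/F$, $g/F$ actually buy is the \emph{reverse} estimate $(f:\varphi)+(g:\varphi)\le(1+2\delta)\bigl((f+g:\varphi)+\delta(h:\varphi)\bigr)$ for $\supp\varphi$ small, which after normalising and letting $\delta\to0$ gives $\lambda(f)+\lambda(g)\le\lambda(f+g)$. As written, your plan attaches the hard mechanism to the trivial inequality and dismisses the genuinely hard one as ``immediate''; executed literally it would not produce additivity. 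Second, the uniqueness computation does not come out as claimed: integrating $f(x)g(yx)$ in the order $\mu(x)$ first and substituting $x\mapsto y^{-1}x$ yields $\int_{G}g\ \mu\cdot\int_{G}f(y^{-1})\ \nu(y)$, while the other order produces $\int_{G}f(x)\bigl(\int_{G}g(yx)\ \nu(y)\bigr)\mu(x)$, whose inner integral is a right translate of $g$ against the left-invariant $\nu$ and hence carries a factor of the modular function $\Delta_{G}(x)$ in the sense of \eqref{eq:M'}. The naive bookkeeping therefore gives an identity involving $\check f(y)=f(y^{-1})$ and $\Delta_{G}$, not $\int_{G}f\ \nu\cdot\int_{G}g\ \mu=\int_{G}f\ \mu\cdot\int_{G}g\ \nu$. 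The standard repair is to integrate instead $h(x,y)=f(x)g(yx)\bigl/\int_{G}g(tx)\ \nu(t)$, whose denominator is continuous and strictly positive on a neighbourhood of $\supp f$ precisely because $\nu$ is non-zero on non-empty open sets; this cancels the modular factor and shows $\int_{G}f\ \mu\bigl/\int_{G}f\ \nu$ is independent of $f$, which is the conclusion you want. With those two corrections the proof is the classical one.
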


We do not prove this theorem here but make the following remark.

\begin{remark}
Whereas the uniqueness statement of Theorem \ref{thm:haar} is not too hard to establish, the existence proof is more involved and not particularly fruitful. For both, see e.g. \cite{Wei65}. However, there are several classes of locally compact Hausdorff groups for which the existence of a Haar measure may be established by more classical means, see Remark \ref{rem:haar_exist}.
\end{remark}

\begin{example}
Let $G$ be a discrete group. Then $\calB(G)=\calP(G)$ and the counting measure on $G$, defined by $\mu:\calP(G)\to\bbR_{\ge 0}\cup\{\infty\}$, $E\mapsto |E|$ is a left and right Haar measure.
\end{example}

More examples of Haar measures are given in Example \ref{ex:haar}. For now, consider the following alternative description of Haar measures: Due to Riesz' Theorem \ref{thm:riesz}, there is a one-to-one correspondence between Haar measures and \emph{Haar functionals}, to be defined shortly, on a given group which is often used to define a Haar measure. Recall that a topological group $G$ acts on $C_{c}(G)$ via the left-regular and the right-regular representations $\lambda_{G}(g)f(x)=f(g^{-1}x)$ and $\varrho_{G}(g)f(x)=f(xg)$ respectively, where $g,x\in G$ and $f\in C_{c}(G)$.

\begin{definition}
Let $G$ be a locally compact Hausdorff group. A \emph{left (right) Haar functional} on $G$ is a non-trivial positive linear functional on $C_{c}(G)$ which is invariant under $\lambda_{G}$ $(\varrho_{G})$.
\end{definition}

\begin{proposition}\label{prop:meas_func_corr}
Let $G$ be a locally compact Hausdorff group. Then there are the following mutually inverse maps.
\begin{displaymath}
 \xymatrix@=1.66cm
 {
      \Phi:\{\text{Haar measures on $G$}\} \ar@^{->}@<.3ex>[r]^-{\text{Integration}} & \{\text{Haar functionals on $G$}\}:\Psi \ar@<.3ex>@^{->}[l]^-{\text{Riesz}}
 }
\end{displaymath}
\end{proposition}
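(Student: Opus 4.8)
The plan is to define the two maps explicitly and then check they are mutually inverse, using Riesz' Theorem~\ref{thm:riesz} as the main engine. For the map $\Phi$, given a left Haar measure $\mu$ on $G$, I would set $\Phi(\mu)$ to be the functional $f\mapsto\int_G f(x)\ \mu(x)$ on $C_c(G)$. The first task is to verify this lands in the target set: integration against a Radon measure is a well-defined linear functional on $C_c(G)$ (every $f\in C_c(G)$ is bounded with compact support, so integrable by \ref{item:LF}), it is positive by construction, it is non-trivial because by Urysohn's Lemma~\ref{lem:urysohn} there is a nonzero $f\in C_c(G)$ with $0\le f\le 1$ and $f\equiv 1$ on some nonempty compact set with nonempty interior, whence $\int_G f\ \mu>0$ using \ref{item:NT} and \ref{item:IR}; and it is $\lambda_G$-invariant because the change of variables formula (Proposition~\ref{prop:chan_var}) applied to the homeomorphism $x\mapsto g^{-1}x$, whose push-forward of $\mu$ is $\mu$ itself by \ref{item:TI}, gives $\int_G f(g^{-1}x)\ \mu(x)=\int_G f(x)\ \mu(x)$. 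For the map $\Psi$, given a left Haar functional $\lambda$, I would let $\Psi(\lambda)$ be the unique Radon measure $\mu$ supplied by Theorem~\ref{thm:riesz} with $\lambda(f)=\int_G f\ \mu$ for all $f\in C_c(G)$; I then must check $\mu$ is a Haar measure, i.e.\ verify \ref{item:NT} and \ref{item:TI}.

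For \ref{item:NT}: if $U\subseteq G$ is open and nonempty, pick by Urysohn's Lemma a nonzero $f\in C_c(G)$ with $f\prec U$ and $f\not\equiv 0$; then the formula $\mu(U)=\sup\{\lambda(f)\mid f\prec U\}$ from Theorem~\ref{thm:riesz}, together with positivity and non-triviality of $\lambda$, forces $\mu(U)>0$ (non-triviality of $\lambda$ gives some $g$ with $\lambda(g)>0$; rescaling and comparing with a suitable bump function subordinate to $U$ shows the supremum is strictly positive). For \ref{item:TI}: fix $g\in G$ and consider the push-forward measure $\nu:=(L_g)_*\mu$ where $L_g(x)=gx$; by the change of variables formula $\int_G f\ \nu=\int_G f\circ L_g\ \mu=\int_G (\lambda_G(g^{-1})f)\ \mu=\lambda(\lambda_G(g^{-1})f)=\lambda(f)=\int_G f\ \mu$ for every $f\in C_c(G)$, using $\lambda_G$-invariance of $\lambda$. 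One checks $\nu$ is again a Radon measure (push-forward under a homeomorphism preserves all three properties in Definition~\ref{def:radon_measure}), so by the uniqueness clause of Riesz' Theorem $\nu=\mu$, which is exactly $\mu(gE)=\mu(E)$ for all Borel $E$; the right-translation case is identical with $\varrho_G$.

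Finally I would show $\Psi\circ\Phi=\mathrm{id}$ and $\Phi\circ\Psi=\mathrm{id}$. For $\Phi\circ\Psi=\mathrm{id}$: starting from a Haar functional $\lambda$, the measure $\mu=\Psi(\lambda)$ satisfies $\lambda(f)=\int_G f\ \mu=\Phi(\mu)(f)$ for all $f\in C_c(G)$ by the defining property of Riesz' measure, so $\Phi(\Psi(\lambda))=\lambda$. For $\Psi\circ\Phi=\mathrm{id}$: starting from a Haar measure $\mu$, the functional $\lambda=\Phi(\mu)$ is $f\mapsto\int_G f\ \mu$, and Riesz' measure associated to $\lambda$ is the \emph{unique} Radon measure representing $\lambda$ by integration; since $\mu$ is itself such a Radon measure, uniqueness gives $\Psi(\Phi(\mu))=\mu$. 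The main obstacle, and the only place requiring genuine care rather than bookkeeping, is the verification that $\Psi(\lambda)$ satisfies \ref{item:NT} and \ref{item:TI} — in particular invoking the uniqueness half of Riesz' Theorem correctly for translation-invariance, since that is what converts invariance of the functional into invariance of the measure; everything else is a direct unwinding of definitions and the change of variables formula.
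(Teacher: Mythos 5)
Your proposal is correct in outline and matches the paper's overall structure ($\Phi$ via integration, $\Psi$ via Riesz, then checking \ref{item:NT} and \ref{item:TI}), but it diverges in two places worth noting. For \ref{item:TI} you argue via the push-forward $(L_g)_{\ast}\mu$, check that it is again a Radon measure representing the same functional, and conclude $(L_g)_{\ast}\mu=\mu$ from the uniqueness clause of Riesz' Theorem~\ref{thm:riesz}; the paper instead computes directly, using outer regularity \ref{item:OR} to reduce $\mu(gE)$ to open sets and then the formula $\mu(gU)=\sup\{\lambda(f)\mid f\prec gU\}$ together with $\lambda_G$-invariance. Both are valid; your route outsources the regularity bookkeeping to the uniqueness statement, at the price of verifying that push-forward under a homeomorphism preserves all three properties of Definition~\ref{def:radon_measure}, while the paper's is self-contained given the sup/inf formulas. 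The one place your sketch is genuinely under-specified is \ref{item:NT}: positivity and non-triviality of $\lambda$ alone do \emph{not} force $\mu(U)>0$ for every non-empty open $U$ --- a positive non-trivial functional could a priori ``live'' far away from $U$ --- so the parenthetical ``rescaling and comparing with a suitable bump function'' must invoke invariance. Concretely, one covers the compact support of a $g\ge 0$ with $\lambda(g)>0$ by finitely many left translates of $U$ (or of a smaller open set on which a bump $h\prec U$ is bounded below) and uses $\lambda_G$-invariance of $\lambda$ to bound $\lambda(g)$ by a multiple of $\lambda(h)$, whence $\mu(U)\ge\lambda(h)>0$. This is exactly the covering argument the paper runs at the level of measures (a non-empty open null set would make every compact set null by translation-invariance, hence $\lambda\equiv 0$). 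Make that use of invariance explicit; as literally stated, your \ref{item:NT} step would fail for a general positive non-trivial functional.
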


\begin{proof}
The map $\Phi$ is readily checked to range in the positive linear functionals on $C_{c}(G)$. For $\lambda_{G}$-invariance ($\varrho_{G}$-invariance), use the change of variables formula given by Proposition~\ref{prop:chan_var}. As to non-triviality, let $\mu$ be a left (right) Haar measure on $G$ and let $K$ be a compact neighbourhood of some point in $G$. Then $\mu(K)\in(0,\infty)$ by \ref{item:LF} and \ref{item:NT}, and by Urysohn's Lemma \ref{lem:urysohn} there is $f\in C_{c}(G)$ such that $K\prec f\prec G$ and therefore $\Phi\mu(f)=\int_{G}f(g)\ \mu(g)\ge\mu(K)\gneq 0$.

Conversely, if $\lambda$ is a left (right) Haar functional on $G$, its non-triviality translates to \ref{item:NT} for $\mu:=\Psi\lambda$ and its invariance under $\lambda_{G}$ $(\varrho_{G})$ translates to \ref{item:TI} for $\mu$:

Suppose $U$ is a non-empty open set of measure zero with respect to $\mu$. Then any compact set admits a finite cover by left (right) translates of $U$ and hence has measure zero as well. Thus $\lambda(f)=\!\int_{G}f(g)\ \mu(g)=\!\int_{\supp f}f(g)\ \mu(g)=0$ for all $f\in C_{c}(G)$, contradicting the non-triviality of $\lambda$.

As for invariance, suppose that $\lambda$ is $\lambda_{G}$-invariant ($\varrho_{G}$-invariance being handled analogously) and let $E\in\calB(G)$ and $g\in G$. Then \ref{item:OR} implies
\begin{displaymath}
  \mu(gE)=\inf\{\mu(U)\mid U\supseteq gE,\ U \text{ open}\}=\inf\{\mu(gU)\mid U\supseteq E,\ U \text{ open}\}.
\end{displaymath}
Furthermore, by Theorem \ref{thm:riesz} and the $\lambda_{G}$-invariance of $\lambda$ we have
\begin{displaymath}
  \mu(gU)=\sup\{\lambda(f)\mid f\prec gU\}=\sup\{\lambda(\lambda_{G}(g)f)\mid f\prec U\}=\mu(U).
\end{displaymath}
Hence $\mu$ is left-invariant. The assertions $\Psi\circ\Phi=\id$ and $\Phi\circ\Psi=\id$ are immediate.
\end{proof}

\begin{example}\label{ex:haar}
Using Proposition~\ref{prop:meas_func_corr} we now provide further examples of Haar measures.
\begin{enumerate}[(i)]
 \item On $G=(\bbR,+)$, a left- and right Haar measure is given by the Lebesgue measure $\lambda$ which can be defined as the Radon measure associated to the Riemann integral $\int_{\bbR}:C_{c}(\bbR)\to \bbC$.
 \item On $G=(\bbR^{n},+)$, $n\ge 1$, a left- and right Haar measure is given by the $n$-th power of the Lebesgue measure $\lambda$.
 \item On $G=(\bbR^{\ast},\cdot)$, the Lebesgue measure is not translation-invariant. However, the map 
 \begin{displaymath}
  \displaybump \mu:C_{c}(G)\to\bbC,\ f\mapsto\int_{\bbR}f(x)\ \frac{\lambda(x)}{|x|}
 \end{displaymath}
  can be checked to be a left- and right Haar functional using the classical substitution rule. Note that the above integral is always finite as the integrand has compact support. Hence $\mu$ defines a left- and right Haar measure on $G$.
 \item On $G=\GL(n,\bbR)$, $n\ge 1$, the map
 \begin{displaymath}
  \displaybump \mu:C_{c}(G)\to\bbC,\ f\mapsto\int_{G}f(X)\ \frac{\lambda(X)}{|\det X|^{n}}
 \end{displaymath}
 defines a left- and right Haar functional. Here, $\lambda(X):=\prod_{i,j=1}^{n}\lambda(x_{ij})$, where $X=(x_{ij})_{i,j}$, is the Lebesgue measure on $\bbR^{n\cdot n}$ of which $\GL(n,\bbR)$ is an open subset. Again, the integral is finite by compactness of the support of the integrand and invariance is checked by changing variables. Note that the case $G=(\bbR^{\ast},\cdot)$ is contained via $n=1$ in this example.
 
 The fact that $\GL(n,\bbR)$ is an open subset of $\bbR^{n\cdot n}$ is key: The above construction does not work for e.g. $\SL(n,\bbR)$ which is a submanifold of $\bbR^{n\cdot n}$ of strictly smaller dimension. A left- and right Haar measure for $\SL(2,\bbR)$ will be constructed in Example \ref{ex:haar_sl2}.
 %\item On $G=S^{1}$, ...
\end{enumerate}
\end{example}

\begin{remark}\label{rem:haar_exist}
With the correspondence between Haar functionals and Haar measures at hand, we now outline existence proofs of Theorem \ref{thm:haar} for compact Hausdorff groups, Lie groups and totally disconnected locally compact separable Hausdorff groups.
\begin{enumerate}[(i)]
 \item\label{item:haar_compact_groups} \emph{Compact Hausdorff groups}. Let $G$ be a compact Hausdorff group. Then $G$ acts continuously on $C(G)=C_{c}(G)$, equipped with the supremum norm, via the left-regular representation. Therefore, $G$ also acts on the dual space $C(G)^{\ast}$ of $C(G)$ via the adjoint representation $\lambda_{G}^{\ast}$ of~$\lambda_{G}$, which is defined by the relation
  \begin{displaymath}
	\displaybump \langle \lambda_{G}^{\ast}(g)\mu,f\rangle=\langle\mu,\lambda_{G}(g^{-1})f\rangle
  \end{displaymath}
    for all $\mu\in C(G)^{\ast}$ and $f\in C(G)$. Since the set $P(G)$ of probability measures on $G$ is a $\text{weak}^{\ast}$-compact, convex and $\lambda_{G}^{\ast}$-invariant subset of $C(G)^{\ast}$, the compact version of the Kakutani-Markov Fixed Point Theorem (e.g. \cite[Thm. 2.23]{Zim90}) provides a $\lambda_{G}^{\ast}$-fixed point within $P(G)$, i.e. a left-invariant probability measure, which turns out to be a Haar~measure.
  
  \item\label{item:haar_lie_groups} \emph{Lie groups}. Let $G$ be a Lie group with Lie algebra $\text{Lie}(G)\cong\Gamma(\mathrm{T}G)^{G}$, the space of left-invariant vector fields on $G$, which is isomorphic to the tangent space $\mathrm{T}_{e}G$ as a vector space. Further, let $X_{1},\ldots,X_{n}$ be a basis of $\mathrm{T}_{e}G$ with associated left-invariant vector fields $X_{1}^{G},\ldots,X_{n}^{G}\in\Gamma(\mathrm{T}G)^{G}$. Then for each $p\in G$, the tuple $((X_{1}^{G})_{p},\ldots,(X_{n}^{G})_{p})$ is a basis of $\mathrm{T}_{p}G$. For each $i\in\{1,\ldots,n\}$ we may thus define a $1$-form $\omega_{i}$ on $G$ by $(\omega_{i})_{p}((X_{j})_{p})=\delta_{ij}$; in other words, for every $p\in G$ the tuple $((\omega_{1})_{p},\ldots,(\omega_{n})_{p})$ is the basis of $\mathrm{T}_{p}^{\ast}G$ dual to $((X_{1}^{G})_{p},\ldots,(X_{n}^{G})_{p})$. It is readily checked that the left-invariance of $X_{1}^{G},\ldots,X_{n}^{G}$ implies left-invariance of the $\omega_{i}$ $(i\in\{1,\ldots,n\})$ in the sense that $L_{g}^{\ast}\omega_{i}=\omega_{i}$ for all $g\in G$ and $i\in\{1,\ldots,n\}$. As a consequence, the $n$-form $\omega:=\omega_{1}\wedge\cdots\wedge\omega_{n}$ is left-invariant as well since $\wedge$ commutes with pullback. One checks that $\omega$ is nowhere vanishing. Finally, we may orient $G$ so that $\omega$ is positive and hence gives rise to the left Haar functional
  \begin{displaymath}
   \displaybump \lambda_{\omega}:C_{c}(G)\to\bbC,\ f\mapsto\int_{G}f\ \omega
  \end{displaymath}
  which in turn provides a left Haar measure on $G$. See \cite[VIII.2]{Kna02} for details.
  
  \item \emph{Totally disconnected locally compact separable Hausdorff groups}. Let $G$ be of this type. By van Dantzig's theorem \cite{vDa31}, $G$ contains a compact open subgroup $K$. Assuming $G$ to be non-compact, by separability and openness of $K$ there are $g_{n}\in G$ $(n\in\bbN)$ such that $G=\bigsqcup_{n\in\bbN}g_{n}K$. Using part (i), let $\nu$ be a Haar measure on $K$ and let $\nu_{n}:=g_{n\ast}\nu$ be the corresponding measure on $g_{n}K$. Finally, for $E\in\calB(G)$ define
  \begin{displaymath}
   \displaybump \mu(E):=\sum_{n\in\bbN}\nu_{n}(E\cap g_{n}K)=\sum_{n\in\bbN}\nu(g_{n}^{-1}E\cap K)
  \end{displaymath}
  if the sum exists and infinity otherwise. Then $\mu$ is a Radon measure on $G$ which is non-zero on non-empty open sets since $\nu$ is. Also, $\mu$ is left-invariant: Given $g\in G$, there is $\sigma\in S_{\bbN}$ such that $gg_{n}K=g_{\sigma(n)}K$. Then
  \begin{align*}
   \displaybump \mu(g^{-1}E)&=\sum_{n\in\bbN}\nu(g_{n}^{-1}g^{-1}E\cap K)=\sum_{n\in\bbN}\nu(g_{\sigma(n)}^{-1}gg_{n}g_{n}^{-1}g^{-1}E\cap K) \\
   &=\sum_{n\in\bbN}\nu(g_{\sigma(n)}^{-1}E\cap K)=\sum_{n\in\bbN}\nu(g_{n}^{-1}E\cap K)=\mu(E).
  \end{align*}
  where the second equality uses $K$-invariance of $\nu$.
\end{enumerate}
\end{remark}

By Remark \ref{rem:haar_exist}\ref{item:haar_compact_groups}, compact Hausdorff groups have finite Haar measure. The converse also holds.

\begin{proposition}\label{prop:haar_cpct}
Let $G$ be a locally compact Hausdorff group and let $\mu$ be a left (right) Haar measure on $G$. Then $\mu(G)<\infty$ if and only if $G$ is compact.
\end{proposition}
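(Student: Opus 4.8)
The plan is to prove the two directions separately. If $G$ is compact, then $G$ itself is a compact subset of $G$, so $\mu(G)<\infty$ is immediate from local finiteness \ref{item:LF}; this requires nothing further. For the converse I would argue by contraposition: assuming $G$ is \emph{not} compact, I will produce a countable pairwise disjoint family of Borel sets all of the same strictly positive measure, which forces $\mu(G)=\infty$. To set up, use local compactness to fix a relatively compact open neighbourhood $V$ of $e\in G$. Then $\overline{V}$ is compact, hence so is $\overline{V}\,\overline{V}^{-1}$ by Proposition~\ref{prop:topgrp_mult} (inversion being a homeomorphism), and in particular $K:=\overline{V}\,\overline{V}^{-1}$ is a compact set containing $VV^{-1}$. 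Also $\mu(V)>0$ by non-triviality \ref{item:NT}.

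Next I would construct points $g_{n}\in G$ $(n\in\bbN)$ inductively so that the left translates $g_{n}V$ are pairwise disjoint. Set $g_{1}:=e$. Given $g_{1},\dots,g_{n}$, the set $\bigcup_{i=1}^{n}g_{i}K$ is compact, being a finite union of translates of the compact set $K$, and hence is a proper subset of $G$ because $G$ is not compact. Choose any $g_{n+1}\in G\setminus\bigcup_{i=1}^{n}g_{i}K$. Since $g_{i}VV^{-1}\subseteq g_{i}K$, we get $g_{n+1}\notin g_{i}VV^{-1}$ for all $i\le n$, and this is precisely the assertion that $g_{n+1}V\cap g_{i}V=\emptyset$ for all $i\le n$: if $g_{n+1}v=g_{i}v'$ with $v,v'\in V$ then $g_{n+1}=g_{i}v'v^{-1}\in g_{i}VV^{-1}$. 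Thus the family $(g_{n}V)_{n\in\bbN}$ is pairwise disjoint.

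Finally, by countable additivity of $\mu$ together with left-translation invariance \ref{item:TI},
\[
  \mu(G)\ \ge\ \mu\Big(\bigsqcup_{n\in\bbN}g_{n}V\Big)\ =\ \sum_{n\in\bbN}\mu(g_{n}V)\ =\ \sum_{n\in\bbN}\mu(V)\ =\ \infty,
\]
contradicting $\mu(G)<\infty$; hence $G$ is compact. The right Haar measure case is entirely analogous, using right translates $Vg_{n}$ and at each step avoiding the compact set $\bigcup_{i=1}^{n}\overline{V}^{-1}\overline{V}g_{i}\supseteq\bigcup_{i=1}^{n}V^{-1}Vg_{i}$. The main obstacle is the inductive step: one must use non-compactness of $G$ to ensure that finitely many translates of a fixed compact set never exhaust $G$, which is exactly why it is essential that $V$ be chosen relatively compact so that $K$ is compact; the remaining ingredients (translation invariance, countable additivity, positivity on $V$) are routine.
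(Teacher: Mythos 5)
Your proof is correct and follows essentially the same route as the paper: both directions match, and for the converse you produce infinitely many pairwise disjoint translates of a fixed open set of strictly positive measure by exploiting that finitely many translates of a compact set cannot exhaust a non-compact group. The only cosmetic difference is that you enlarge your relatively compact $V$ to the compact set $K=\overline{V}\,\overline{V}^{-1}\supseteq VV^{-1}$ directly, whereas the paper starts from a relatively compact neighbourhood $U$ and shrinks to a $V$ with $VV^{-1}\subseteq U$ via Lemma~\ref{lem:prod_nbhd}; both devices serve the same purpose.
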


\begin{proof}
If $G$ is compact, then $\mu(G)<\infty$ by Definition \ref{item:LF}. Conversely, suppose that $G$ is not compact and let $U$ be a relatively compact neighbourhood of $e\in G$. Then there is an infinite sequence $(g_{n})_{n\in\bbN}$ of elements of $G$ such that $g_{n}\notin\bigcup_{k<n}g_{k}U$; otherwise $G$ would be compact as a finite union of compact sets. Let $V$ be as in Lemma \ref{lem:prod_nbhd}. Then the sets $g_{n}V$ $(n\in\bbN)$ are pairwise disjoint by the fact that $VV^{-1}\subseteq U$ and the definition of $(g_{n})_{n\in\bbN}$. Therefore, as $V$ has strictly positive measure, $G$ has infinite measure.
\end{proof}

\section{Unimodularity}

We now address and quantify the question whether left and right Haar measures on a given locally compact Hausdorff group coincide.

\begin{definition}\label{def:unimod}
A locally compact Hausdorff group $G$ is \emph{unimodular} if every left Haar measure on $G$ is also a right Haar measure on $G$ and conversely.
\end{definition}

\begin{remark}\label{rem:unimod}
By Theorem \ref{thm:haar}, it suffices in Definition \ref{def:unimod} to ask for every left Haar measure on $G$ to also be a right Haar measure.
\end{remark}

Proposition \ref{prop:class_unimod} below provides several classes of unimodular groups. For now, let $G$ be a locally compact Hausdorff group and let $\mu$ be a left Haar measure on $G$. Then for every $g\in G$, the map $\mu_{g}:\calB(G)\to\bbR_{\ge 0}\cup\{\infty\},\ E\mapsto\mu(Eg)$ is a left Haar measure on $G$ as well. Hence, by uniqueness, there exists a strictly positive real number $\Delta_{G}(g)$ such that $\mu_{g}=\Delta_{G}(g)\mu$, i.e.
\begin{equation}
  \mu(Eg)=\mu_{g}(E)=\Delta_{G}(g)\mu(E) \quad\text{for all}\quad E\in\calB(G).
  \tag{M}
  \label{eq:M}	
\end{equation}
The function $\Delta_{G}:G\to\bbR_{>0}$ is independent of $\mu$ and called the \emph{modular function of $G$}.

Let $\lambda$ be the left Haar functional associated to $\mu$ by Proposition \ref{prop:meas_func_corr}. Then by the change of variables formula of Proposition~\ref{prop:chan_var} applied to $\varphi=R_{g^{-1}}$, equation (\ref{eq:M}) immediately translates to
\begin{equation}
  \lambda(\varrho_{G}(g^{-1})f)=\Delta_{G}(g)\lambda(f) \quad\text{for all}\quad f\in C_{c}(G).
  \tag{M'}
  \label{eq:M'}
\end{equation}

\begin{proposition}
Let $G$ be a locally compact Hausdorff group. Then the modular function $\Delta_{G}$ is a continuous homomorphism from $G$ to $(\bbR_{>0},\cdot)$.
\end{proposition}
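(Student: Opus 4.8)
The plan is to establish separately that $\Delta_{G}$ is a group homomorphism and that it is continuous. The homomorphism property is a formal consequence of the defining relation \eqref{eq:M}: fix a left Haar measure $\mu$ and choose $E\in\calB(G)$ with $0<\mu(E)<\infty$ --- for instance a relatively compact open neighbourhood of $e$, which has positive measure by \ref{item:NT} and finite measure by \ref{item:LF}. For $g,h\in G$ I would then compute $\mu(Egh)$ in two ways: directly it equals $\Delta_{G}(gh)\mu(E)$, whereas grouping it as $\mu((Eg)h)$ and applying \eqref{eq:M} twice gives $\Delta_{G}(h)\Delta_{G}(g)\mu(E)$. Cancelling $\mu(E)$ yields $\Delta_{G}(gh)=\Delta_{G}(g)\Delta_{G}(h)$, and in particular $\Delta_{G}(e)=1$.

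For continuity I would first reduce to continuity at $e$: since $\Delta_{G}$ is a homomorphism into the topological group $(\bbR_{>0},\cdot)$, for arbitrary $g_{0}\in G$ one has $\Delta_{G}(g)=\Delta_{G}(g_{0})\,\Delta_{G}(g_{0}^{-1}g)$, and $g_{0}^{-1}g\to e$ as $g\to g_{0}$ by continuity of multiplication. To handle the identity, use Urysohn's Lemma~\ref{lem:urysohn} to fix a non-negative $f\in C_{c}(G)$ that is identically $1$ on some compact neighbourhood; passing to the associated left Haar functional $\lambda$ via Proposition~\ref{prop:meas_func_corr} gives $\lambda(f)>0$, and equation \eqref{eq:M'} rearranges to $\Delta_{G}(g)=\lambda(\varrho_{G}(g^{-1})f)/\lambda(f)$. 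Hence $|\Delta_{G}(g)-1|=|\lambda(h_{g})|/\lambda(f)$ with $h_{g}:=\varrho_{G}(g^{-1})f-f$, and the task is to make $|\lambda(h_{g})|$ small for $g$ near $e$.

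The key is to bound $|\lambda(h_{g})|$ by controlling both the size and the support of $h_{g}$ uniformly in $g$. For the size, Proposition~\ref{prop:unif_cont} (right uniform continuity of $f$) provides, given $\varepsilon'>0$, a neighbourhood $U$ of $e$ with $|f(xg^{-1})-f(x)|<\varepsilon'$ for all $x\in G$ whenever $g^{-1}\in U$, i.e.\ $\|h_{g}\|_{\infty}<\varepsilon'$. For the support, fix once and for all a relatively compact open neighbourhood $V$ of $e$; since $\supp(\varrho_{G}(g^{-1})f)=(\supp f)\,g$, every $g\in V$ satisfies $\supp h_{g}\subseteq(\supp f)\,\overline{V}=:L$, with $L$ compact by Proposition~\ref{prop:topgrp_mult}. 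Applying Urysohn's Lemma~\ref{lem:urysohn} a second time yields $\psi\in C_{c}(G)$ with $L\prec\psi\prec G$, so that for $g\in V$ with $g^{-1}\in U$ we have $|h_{g}|\le\varepsilon'\psi$ pointwise. By positivity --- hence monotonicity --- of $\lambda$, together with $|\lambda(h_{g})|\le\lambda(|h_{g}|)$, this gives $|\lambda(h_{g})|\le\varepsilon'\lambda(\psi)$ and therefore $|\Delta_{G}(g)-1|\le\varepsilon'\lambda(\psi)/\lambda(f)$. Given $\varepsilon>0$, choosing $\varepsilon'$ accordingly and then $U$ as above, the neighbourhood $W:=V\cap U^{-1}$ of $e$ witnesses continuity at $e$.

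The main obstacle is precisely this last estimate: the functional $\lambda$ is \emph{not} bounded for the supremum norm on $C_{c}(G)$, so a bound on $\|h_{g}\|_{\infty}$ alone is worthless. The crucial point is to first trap all the functions $h_{g}$, for $g$ ranging over a fixed small neighbourhood of $e$, inside one compact set $L$, after which a single Urysohn function $\psi$ converts the uniform-continuity estimate into the required bound on $\lambda$. The remaining verifications --- continuity of the auxiliary maps, the fact that $h_{g}\in C_{c}(G)$, and positivity/monotonicity of $\lambda$ --- are routine.
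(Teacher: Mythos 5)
Your proposal is correct and follows essentially the same route as the paper: the homomorphism property by evaluating \eqref{eq:M} twice on a set of finite positive measure, reduction of continuity to the identity via the homomorphism property, and the estimate $\vert\Delta_{G}(g)-1\vert\le\varepsilon'\lambda(\psi)/\lambda(f)$ obtained from right uniform continuity (Proposition~\ref{prop:unif_cont}) together with a second Urysohn function $\psi$ dominating a common compact set containing all the supports. Your explicit identification of why the auxiliary $\psi$ is needed --- that $\lambda$ is not sup-norm bounded --- is exactly the point the paper's choice of $\psi$ is addressing.
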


\begin{proof}
Let $\mu$ be a left Haar measure on $G$. The homomorphism property is immediate from (\ref{eq:M}): For all $g,h\in G$ we have
\begin{displaymath}
  \Delta_{G}(gh)\mu=\mu_{gh}=(\mu_{g})_{h}=\Delta_{G}(h)\mu_{g}=\Delta_{G}(h)\Delta_{G}(g)\mu=\Delta_{G}(g)\Delta_{G}(h)\mu.
\end{displaymath}
Evaluating on a set of non-zero finite measure, e.g. a compact neighbourhood of some point, proves that indeed $\Delta_{G}(gh)=\Delta_{G}(g)\Delta_{G}(h)$.

As to continuity, note that it suffices to check continuity at $e\in G$, since $\Delta_{G}$ is a homomorphism. Let $\lambda$ be the left Haar functional associated to $\mu$ by Proposition \ref{prop:meas_func_corr} and let $K$ be a compact neighbourhood of $e\in G$. Using Urysohn's Lemma \ref{lem:urysohn}, choose $\varphi\in C_{c}(G)$ such that $K\prec \varphi\prec G$ and $\psi\in C_{c}(G)$ such that $K\supp\varphi\prec\psi\prec G$ (see Proposition \ref{prop:topgrp_mult}). In particular, $\varphi$ is uniformly continuous on the right by Proposition \ref{prop:unif_cont}. Hence, given $\varepsilon>0$, there is a symmetric open neighbourhood $U\subseteq K$ of $e\in G$ such that $\vert\varphi(xg)-\varphi(x)\vert<\varepsilon$ for all $g\in U$. Then by (\ref{eq:M'}) we have
\begin{displaymath}
  \vert\Delta_{G}(g)-1\vert=\frac{1}{\lambda(\varphi)}\left\vert \Delta_{G}(g)\lambda(\varphi)-\lambda(\varphi)\right\vert\le\frac{1}{\lambda(\varphi)}\lambda(\vert\varrho_{G}(g^{-1})\varphi-\varphi\vert\psi)\le\varepsilon\frac{\lambda(\psi)}{\lambda(\varphi)}
\end{displaymath}
for all $g\in U$. Hence $\Delta_{G}$ is continuous at $e\in G$.
\end{proof}

\begin{remark}
We have noticed that for a locally compact Hausdorff group $G$ with left Haar measure $\mu$ and given $g\in G$, the map $\mu_{g}:\calB(G)\to\bbR_{\ge 0}\cup\{\infty\},\ E\mapsto\mu(Eg)$ is a left Haar measure on $G$ as well. This is an instance of the following more general observation: For every continuous automorphism $\alpha\in\Aut(G)$, the map $\mu_{\alpha}:\calB(G)\to\bbR_{\ge 0}\cup\{\infty\},\ E\mapsto\mu(\alpha(E))$ is a left Haar measure on $G$. In this setting, $\mu_{g}=\mu_{\text{int}(g^{-1})}$ where $\text{int}(g):G\to G,\ x\mapsto gxg^{-1}$ denotes conjugation in $G$ by $g$. One may then introduce the general \emph{modular function} $\text{mod}_{G}:\Aut(G)\to(\bbR_{>0},\cdot)$ which remains to be a homomorphism and when $\Aut(G)$ is equipped with the Braconnier topology, a refinement of the compact-open topology, becomes continuous. See e.g. \cite[12.1.12]{Pal01} for details.
\end{remark}

We obtain the following useful criterion for unimodularity.

\begin{corollary}\label{cor:crit_unimod}
A locally compact Hausdorff group $G$ is unimodular if and only if $\Delta_{G}\equiv 1$.
\end{corollary}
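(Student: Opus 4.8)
The plan is to deduce the statement directly from the defining relation \eqref{eq:M} of the modular function, the uniqueness clause of Theorem~\ref{thm:haar}, and Remark~\ref{rem:unimod}. Fix a left Haar measure $\mu$ on $G$ and recall that the resulting function $\Delta_{G}$ is independent of this choice, so it suffices to compare a single $\mu$ with its right translates. The one small technical ingredient is the existence of a Borel set $E_{0}$ with $0<\mu(E_{0})<\infty$: take $E_{0}$ to be a compact neighbourhood of some point of $G$, which exists by local compactness, has finite measure by \ref{item:LF}, and has positive measure by \ref{item:NT}.

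For the implication ``$\Delta_{G}\equiv 1\Rightarrow$ unimodular'': by \eqref{eq:M}, $\mu(Eg)=\Delta_{G}(g)\mu(E)=\mu(E)$ for every $E\in\calB(G)$ and $g\in G$, so $\mu$ satisfies \ref{item:TI} on the right; since $\mu$ is already a Radon measure which is non-zero on non-empty open sets, it is a right Haar measure. As $\Delta_{G}$ does not depend on the choice of left Haar measure, the same argument applies to every left Haar measure, whence $G$ is unimodular by Remark~\ref{rem:unimod}.

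For the converse, suppose $G$ is unimodular. Then $\mu$ is in particular a right Haar measure, so $\mu(Eg)=\mu(E)$ for all $E\in\calB(G)$ and $g\in G$; comparing with \eqref{eq:M} yields $\Delta_{G}(g)\mu(E)=\mu(E)$ for all $E\in\calB(G)$. Evaluating at $E=E_{0}$ and dividing by $\mu(E_{0})\in(0,\infty)$ gives $\Delta_{G}(g)=1$, and since $g\in G$ was arbitrary, $\Delta_{G}\equiv 1$.

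The argument presents no genuine obstacle; the only points requiring a moment's care are the production of the test set $E_{0}$ of finite, non-zero measure, and the observation that $\Delta_{G}$ is independent of $\mu$, so that verifying the translation condition for one left Haar measure settles it for all of them.
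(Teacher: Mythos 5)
Your proof is correct and follows the same route as the paper: both directions rest on the defining relation \eqref{eq:M}, with Remark~\ref{rem:unimod} handling the forward implication and evaluation on a compact neighbourhood of finite, non-zero measure handling the converse. Your write-up merely spells out the details the paper leaves implicit.
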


\begin{proof}
If $\Delta_{G}\equiv 1$, then $G$ is unimodular by (\ref{eq:M}) and Remark \ref{rem:unimod}. Conversely, if $G$ is unimodular, let $\mu$ be a Haar measure on $G$ and let $E$ be a compact neighbourhood of some point in $G$. Then $\mu(E)\in(0,\infty)$ and hence $\Delta_{G}\equiv 1$ by (\ref{eq:M}).
\end{proof}

Corollary \ref{cor:crit_unimod} provides us with the following list of classes of unimodular groups. Yet another class will be given in Proposition \ref{prop:lat_unimod}.

\begin{proposition}\label{prop:class_unimod}
Let $G$ be a locally compact Hausdorff group. Then $G$ is unimodular if, in addition, it satisfies one of the following properties: being abelian, compact, discrete, topologically simple, connected semisimple Lie or connected nilpotent Lie.
\end{proposition}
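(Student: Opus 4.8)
By Corollary~\ref{cor:crit_unimod}, it suffices to show $\Delta_{G}\equiv 1$ in each case; I would treat the cases essentially independently, exploiting the fact that $\Delta_{G}:G\to(\bbR_{>0},\cdot)$ is a continuous homomorphism.

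\emph{Abelian.} Here left and right translations agree, so any left Haar measure is manifestly a right Haar measure; alternatively $\mu_{g}(E)=\mu(Eg)=\mu(gE)=\mu(E)$ directly, so $\Delta_{G}\equiv 1$.

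\emph{Compact.} Since $\Delta_{G}$ is a continuous homomorphism, $\Delta_{G}(G)$ is a compact subgroup of $(\bbR_{>0},\cdot)$; but the only such subgroup is $\{1\}$, so $\Delta_{G}\equiv 1$. (One could equally invoke Proposition~\ref{prop:haar_cpct}: $\mu(G)<\infty$ together with $\mu(G)=\mu(Gg)=\Delta_{G}(g)\mu(G)$ and $\mu(G)\gneq 0$ forces $\Delta_{G}(g)=1$.)

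\emph{Discrete.} The counting measure is simultaneously a left and right Haar measure, so $G$ is unimodular by definition; or note $\Delta_{G}(g)\mu(\{e\})=\mu(\{e\}g)=\mu(\{g\})=\mu(\{e\})$ with $\mu(\{e\})\in(0,\infty)$.

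\emph{Topologically simple.} The kernel $\ker\Delta_{G}$ is a closed normal subgroup of $G$ which is nontrivial (it contains, e.g., any compact open or more generally is all of $G$ unless... ) — more carefully: $\ker\Delta_{G}$ is closed and normal, and it is not trivial since $\Delta_{G}(G)$ is abelian so $\ker\Delta_{G}\supseteq \overline{[G,G]}$, which is a nontrivial closed normal subgroup of the nontrivial group $G$ unless $G$ is abelian (already handled). By topological simplicity $\overline{[G,G]}=G$, whence $\ker\Delta_{G}=G$, i.e. $\Delta_{G}\equiv 1$. I would phrase this as: $\Delta_{G}$ factors through the abelianization, so $\overline{[G,G]}\subseteq\ker\Delta_{G}$; since $\ker\Delta_{G}\trianglelefteq G$ is closed and $G$ is topologically simple, either $\ker\Delta_{G}=\{e\}$ or $\ker\Delta_{G}=G$, and the former is impossible as $\bbR_{>0}$ has no nontrivial discrete-image-of-simple... \emph{here is where care is needed}: if $\ker\Delta_{G}=\{e\}$ then $G$ embeds as a subgroup of $\bbR_{>0}$, hence is abelian, hence unimodular anyway, so $\Delta_{G}\equiv 1$ regardless.

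\emph{Connected semisimple Lie and connected nilpotent Lie.} In both cases I would use that $G$ is connected, so $\Delta_{G}(G)$ is a connected subgroup of $\bbR_{>0}$, and that $\Delta_{G}$ is a Lie group homomorphism, hence its kernel contains $\overline{[G,G]}$. For semisimple $G$ one has $[G,G]=G$ (the Lie algebra satisfies $[\mathfrak{g},\mathfrak{g}]=\mathfrak{g}$ and $G$ is connected), so $\Delta_{G}\equiv 1$. For nilpotent connected Lie groups, the cleanest route is to differentiate: $\delta:=d(\log\circ\Delta_{G})_{e}:\mathfrak{g}\to\bbR$ is a Lie algebra homomorphism, hence kills $[\mathfrak{g},\mathfrak{g}]$; but a nilpotent Lie algebra... no, that does not immediately give $\delta=0$. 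Better: for a connected nilpotent Lie group, the adjoint representation $\Ad$ is unipotent, so $\det\Ad(g)=1$ for all $g$, and one shows $\Delta_{G}(g)=|\det\Ad(g)|^{-1}$ (or $=|\det\Ad(g^{-1})|$), giving $\Delta_{G}\equiv 1$.

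\textbf{The identity $\Delta_{G}=|\det\Ad|^{\pm 1}$ and the main obstacle.} The crux common to the two Lie cases is the formula relating the modular function to the adjoint representation: if $\mu$ is a left Haar measure on a Lie group $G$, then $\Delta_{G}(g)=|\det(\Ad(g^{-1}))|=|\det(\Ad(g))|^{-1}$. I would prove this by starting from the left-invariant $n$-form $\omega=\omega_{1}\wedge\cdots\wedge\omega_{n}$ of Remark~\ref{rem:haar_exist}(ii), computing the pullback $R_{g}^{\ast}\omega$ by a right translation, and observing that $R_{g}^{\ast}\omega$ is again left-invariant hence a scalar multiple $c(g)$ of $\omega$, with $c(g)$ read off at $e$ via $(R_{g}\circ L_{g^{-1}})=\mathrm{int}(g^{-1})$ and $d(\mathrm{int}(g^{-1}))_{e}=\Ad(g^{-1})$, so $c(g)=\det\Ad(g^{-1})$; translating into measures via $\lambda_{\omega}$ gives the claim. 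Once this formula is in hand, semisimplicity ($[G,G]=G$ forces $\det\Ad\equiv 1$ on the connected group, since $\det\circ\Ad$ is a homomorphism to $\bbR_{>0}$ trivial on commutators) and nilpotence ($\Ad$ unipotent $\Rightarrow\det\Ad\equiv 1$) both finish immediately. I expect the main obstacle to be marshalling the differential-geometric identity $\Delta_{G}=|\det\Ad|^{-1}$ cleanly without re-deriving the machinery of Remark~\ref{rem:haar_exist}(ii); in a text at this level it may be preferable to simply cite it (e.g. \cite{Kna02}) and spend the words on why it yields triviality in the two Lie cases, and on the elementary compact-subgroup-of-$\bbR_{>0}$ argument which dispatches the abelian, compact, discrete and topologically simple cases with almost no computation.
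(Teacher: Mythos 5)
Your proposal is correct and follows essentially the same route as the paper: the same case-by-case treatment, the same reduction of the two Lie cases to the identity $\Delta_{G}=\vert\det\mathrm{Ad}\vert^{\pm1}$ (cited rather than re-derived), and the same commutator/closed-normal-subgroup argument for topological simplicity. The only differences are cosmetic -- the paper handles the compact case via $\mu(G)\in(0,\infty)$ (your parenthetical alternative) and the semisimple case by differentiating to $[\mathfrak{g},\mathfrak{g}]=\mathfrak{g}$ rather than using $[G,G]=G$ at the group level.
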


\begin{proof}
When $G$ is abelian then $Eg=gE$ for every subset $E\subseteq G$ and all $g\in G$. Hence left-invariance implies right-invariance.

When $G$ is compact and $\mu$ is a left Haar measure on $G$, then $\mu(G)\in(0,\infty)$ by \ref{item:LF} and \ref{item:NT} and therefore $\Delta_{G}\equiv 1$ by (\ref{eq:M}).

For a discrete group, the left Haar measures are the strictly positive scalar multiples of the counting measure which is also right-invariant.

When $G$ is topologically simple, then $\overline{[G,G]}$, which is a closed normal subgroup of $G$, either equals $\{e\}$ or $G$. In the first case $G$ is abelian and hence unimodular. In the latter case, continuity of $\Delta_{G}$ implies $\smash{\Delta_{G}(G)=\Delta_{G}(\overline{[G,G]})\subseteq\overline{\Delta_{G}([G,G])}=\{1\}}$ and hence $G$ is unimodular.

When $G$ is a Lie group, the modular function $\Delta_{G}:G\to(\bbR_{>0},\cdot)$ is a continuous and hence smooth (\cite[Thm. 3.39]{War83}) homomorphism of Lie groups. It is given by $\Delta_{G}(g)=\vert\det\text{Ad}(g)\vert$, where $\text{Ad}:G\to\Aut(\text{Lie}(G))$ is the adjoint representation of $G$, see e.g. \cite[Prop. 8.27]{Kna02}, which follows in the setting of Remark \ref{rem:haar_exist}\ref{item:haar_lie_groups}. In particular, the derivative $D_{e}\Delta_{G}:\text{Lie}(G)\to\bbR$ is a morphism of Lie algebras. When $\text{Lie}(G)$ is semisimple we obtain
\begin{displaymath}
D_{e}\Delta_{G}(\text{Lie}(G))=D_{e}\Delta_{G}([\text{Lie}(G),\text{Lie}(G)])=[D_{e}\Delta_{G}(\text{Lie}(G)),D_{e}\Delta_{G}(\text{Lie}(G))]=\{0\}
\end{displaymath}
as $(\bbR_{>0},\cdot)$ is abelian. Thus $\Delta_{G}\!\equiv\! 1$ by the Lie correspondence, passing to the universal cover of $G$.

For a connected nilpotent Lie group the exponential map $\text{exp}:\text{Lie}(G)\to G$ is surjective, see e.g. \cite[Thm. 1.127]{Kna02}. So for every $g\in G$ there is some $X\in\text{Lie}(G)$ such that $g=\text{exp}(X)$ and
\begin{displaymath}
  \Delta_{G}(g)=\vert\det\text{Ad}(g)\vert=\vert\det\text{Ad}(\exp X)\vert=\vert\det e^{\text{ad}X}\vert=e^{\tr(\text{ad}X)}=1
\end{displaymath}
where the last equality follows from the fact that $\text{ad}X$ is nilpotent as $\text{Lie}(G)$ is.
\end{proof}

\begin{remark}
It can be shown that $G$ is unimodular if and only if $G/Z(G)$ is unimodular, see e.g. \cite[Proposition 25]{Nac76}. Hence any nilpotent locally compact Hausdorff group is unimodular. Solvable groups, however, need not be unimodular, see Example~\ref{ex:non_unimod}\ref{item:solvable_non_unimod}.
\end{remark}

The following proposition provides a class of totally disconnected locally compact Hausdorff groups that are unimodular. Recall that if $T$ is a locally finite tree then $\Aut(T)$ is a totally disconnected locally compact separable Hausdorff group with the permutation topology. We adopt Serre's graph theory conventions, see \cite{Ser80}.

\begin{proposition}\label{prop:unimod_tree}
Let $T=(V,E)$ be a locally finite tree. If $G\le\Aut(T)$ is closed and locally transitive then $G$ is unimodular.
\end{proposition}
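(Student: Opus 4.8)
The plan is to use Corollary~\ref{cor:crit_unimod} and show $\Delta_G \equiv 1$. Since $G$ is closed in $\Aut(T)$, it is itself a totally disconnected locally compact Hausdorff group, and by van Dantzig's theorem it has a compact open subgroup; in fact, the stabilizer $G_x$ of any vertex $x \in V$ is compact and open, because $G$ carries the permutation topology and $T$ is locally finite. The key observation is that for a compact open subgroup $K \le G$, a left Haar measure $\mu$ (normalized so $\mu(K) = 1$, say) assigns to any coset $gKg^{-1}$ — which is again compact open — the value $\mu(gKg^{-1})$, and one can read off $\Delta_G$ from how such measures compare. More usefully: for $g \in G$, $\Delta_G(g) = \mu(Kg)/\mu(K)$ by (\ref{eq:M}); and if $K$ is a subgroup, then $Kg$ is a left coset of $K$, so $\mu(Kg) = \mu(K)$ whenever $g \in N_G(K)$, but in general we want a cleaner index computation.

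Here is the approach I would actually carry out. Fix an edge-stabilizer or a vertex-stabilizer and use the index formula for the modular function on totally disconnected groups: if $K$ is a compact open subgroup of $G$ and $g \in G$, then
\begin{displaymath}
  \Delta_G(g) = \frac{[K : K \cap gKg^{-1}]}{[gKg^{-1} : K \cap gKg^{-1}]}.
\end{displaymath}
This follows from (\ref{eq:M}) together with the fact that a left Haar measure on $G$ restricts to a Haar measure on the open subgroups $K$ and $gKg^{-1}$, so $\mu(K) = [K : K\cap gKg^{-1}]\,\mu(K \cap gKg^{-1})$ and similarly for $gKg^{-1}$, while $\mu(gKg^{-1}) = \Delta_G(g^{-1})^{-1}\mu(K)$ by translation. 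Now take $K = G_x$ for a vertex $x$. Then $gKg^{-1} = G_{gx}$, and $K \cap gKg^{-1} = G_x \cap G_{gx}$ is the pointwise stabilizer of $\{x, gx\}$, hence of the geodesic segment $[x, gx]$. The indices $[G_x : G_x \cap G_{gx}]$ and $[G_{gx} : G_x \cap G_{gx}]$ are the sizes of the orbits of $gx$ under $G_x$ and of $x$ under $G_{gx}$ respectively (inside the sphere of radius $d(x,gx)$). So $\Delta_G(g) = 1$ is equivalent to: the $G_x$-orbit of $gx$ and the $G_{gx}$-orbit of $x$ have the same size.

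The main obstacle is establishing this orbit-size symmetry, and this is exactly where local transitivity enters. I would argue by induction on $n = d(x, gx)$ along the geodesic $x = x_0, x_1, \dots, x_n = gx$: local transitivity means $G_{x_i}$ acts transitively on the $\deg(x_i)$ edges at $x_i$ (hence on the neighbours of $x_i$), so one shows that the stabilizer of the path $[x_0, x_i]$ acts transitively on the continuations to $x_{i+1}$'s orbit, and because $T$ is a tree the count of "ways to extend the geodesic one more step" is the same measured from either end — concretely, $[G_x : G_{[x,gx]}] = \prod$ of branching numbers read one way equals $[G_{gx} : G_{[x,gx]}] = \prod$ read the other way, both products running over the same multiset of local degrees minus the back-edges along the common geodesic. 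This combinatorial identity, which is precisely the statement that a tree has no cycles so the geodesic is unique and the two end-counts match term by term, gives $\Delta_G(g) = 1$ for all $g$, hence $\Delta_G \equiv 1$ and $G$ is unimodular by Corollary~\ref{cor:crit_unimod}. A slicker route, if one wants to avoid the explicit induction, is to note that local transitivity forces $G$ to act transitively on each sphere $S(x, r)$ around $x$ (again by induction on $r$, using that a geodesic in a tree is determined by its steps), whence $[G_x : G_x \cap G_{gx}] = |S(x,n)|$ depends only on $n = d(x,gx)$ and not on the direction; applying the same to $G_{gx}$ and $x$ gives the equality immediately.
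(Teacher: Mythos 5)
Your reduction of the problem to the index formula
$\Delta_G(g) = [G_x : G_x\cap G_{gx}]\,/\,[G_{gx} : G_x \cap G_{gx}]$
for a compact open vertex stabiliser is sound (modulo a harmless slip: $\mu(gKg^{-1})=\mu(Kg^{-1})=\Delta_G(g^{-1})\mu(K)=\Delta_G(g)^{-1}\mu(K)$, not $\Delta_G(g^{-1})^{-1}\mu(K)$), and the orbit-size symmetry you then need is indeed true. The genuine gap is in how you establish it: local transitivity does \emph{not} imply that $G_x$ acts transitively on the spheres $S(x,r)$, nor that the stabiliser of a geodesic segment acts transitively on its one-step extensions. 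Local transitivity only says that the \emph{full} stabiliser $G_{x_i}$ is transitive on $E(x_i)$; your induction needs the stabiliser of one edge at $x_i$ (indeed of the whole path back to $x_0$) to remain transitive on the other $\deg(x_i)-1$ edges, which is the strictly stronger hypothesis of local $2$-transitivity. Concretely, for the Burger--Mozes universal group $U(F)\le\Aut(T_d)$ with $F$ a regular transitive group such as $\bbZ/d\bbZ$, the group is closed and locally transitive, yet any element fixing an edge fixes every edge at both of its endpoints, so $G_x$ has $d-1$ orbits on $S(x,2)$ when $d\ge 3$. Hence neither index equals $|S(x,n)|$ in general, your product formula for $[G_x:G_{[x,gx]}]$ is unjustified, and both your ``main induction'' and the ``slicker route'' break at the same point.

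The repair is to telescope along the geodesic $x=x_0,\dots,x_n=gx$ through \emph{edge} stabilisers instead of computing the two big indices outright; this is essentially the paper's proof. With $e_i$ the edge from $x_i$ to $x_{i+1}$, local transitivity gives exactly the coset decomposition $G_{x_i}=\bigsqcup_{e\in E(x_i)}g_eG_{e_i}$, hence $[G_{x_i}:G_{e_i}]=\deg(x_i)$ and likewise $[G_{x_{i+1}}:G_{\overline{e}_i}]=\deg(x_{i+1})$; since $G_{e_i}=G_{\overline{e}_i}$, one gets $\mu(G_{x_i})/\mu(G_{x_{i+1}})=\deg(x_i)/\deg(x_{i+1})$, and the product telescopes to $\deg(x)/\deg(gx)=1$ because $g$ is an automorphism. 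Thus $\mu(G_x)=\mu(G_{gx})=\mu(gG_xg^{-1})=\mu(G_xg^{-1})=\Delta_G(g^{-1})\mu(G_x)$ by (\ref{eq:M}), and $\Delta_G\equiv 1$ follows from $\mu(G_x)\in(0,\infty)$ and Corollary~\ref{cor:crit_unimod}. The paper packages this telescoping as the statement that all edge stabilisers have the same Haar measure, namely $\mu(G_v)/\deg(v)$ for either endpoint $v$.
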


\begin{proof}
Let $\mu$ be a left Haar measure on $G$, see Remark \ref{rem:haar_exist}. Since $G$ is locally transitive there is for every triple $(x,e_{0},e)$ of a vertex $x\in V$ and edges $e_{0},e\in E(x)$ an element $g_{e}\in G_{x}$ such that $g_{e}e_{0}=e$. Then $G_{x}=\bigsqcup_{e\in E(x)}g_{e}G_{e_{0}}$. Since $G_{e}=G_{\overline{e}}$ for all $e\in E$ we conclude that $\mu(G_{e})=\mu(G_{e'})$ for all $e,e'\in E$. Given $g\in G$ we therefore have
\begin{displaymath}
 \mu(G_{e})=\mu(G_{ge})=\mu(gG_{e}g^{-1})=\mu(G_{e}g^{-1})=\Delta_{G}(g^{-1})\mu(G_{e}).
\end{displaymath}
Since $\mu(G_{e})\in(0,\infty)$ as a compact open subgroup of $\Aut(T)$ we conclude that $G$ is unimodular.
\end{proof}

\begin{example}\label{ex:non_unimod}
We now provide two related examples of non-unimodular groups, cf. Remark~\ref{rem:non_unimod}.
\begin{enumerate}[(i)]
 \item\label{item:solvable_non_unimod} Consider the group
\begin{displaymath}
 \displaybump P:=\left\{\left.\begin{pmatrix} x & y \\ & x^{-1} \end{pmatrix}\right| x\in\bbR\backslash\{0\},\ y\in\bbR\right\}\le\SL(2,\bbR).
\end{displaymath}
Then the functionals $\mu,\nu :C_{c}(P)\to\bbC$, given by
\begin{displaymath}
 \displaybump \mu:f\mapsto\int_{\bbR^{2}}f(X)\ \frac{\lambda(x)\lambda(y)}{x^{2}} \quad\text{and}\quad \nu:f\mapsto\int_{\bbR^{2}}f(X)\ \lambda(x)\lambda(y)
\end{displaymath}
are left- and right Haar functionals respectively as can be checked by changing variables. However, $P$ is a closed subgroup of $\SL(2,\bbR)$ which is unimodular as a connected simple Lie group by Proposition \ref{prop:class_unimod}. Remark \ref{rem:non_unimod} sheds some light on the origin of this example.
 \item Let $T_{d}=(V,E)$ be the $d$-regular tree and let $\omega\in\partial T_{d}$ be a boundary point of $T_{d}$. Set $G:=\Aut(T_{d})_{\omega}$, the stabiliser of $\omega$ in $\Aut(T_{d})$. Then $G$ is not unimodular: Let $t\in G$ be a translation of length $1$ towards $\omega$ and let $x\in V$ be on the translation axis of $t$, then
  \begin{displaymath}
   \displaybump \Delta(t)=\frac{\mu(G_{x})}{\mu(G_{tx})}=\frac{\mu(G_{x})}{\mu(G_{x,tx})}\frac{\mu(G_{x,tx})}{\mu(G_{tx})}=\frac{[G_{x}:G_{x,tx}]}{[G_{tx}:G_{x,tx}]}=\frac{|G_{x}tx|}{|G_{tx}x|}=\frac{1}{d-1}.
  \end{displaymath}
\end{enumerate}
\end{example}

Uilising the modular function, we can turn left Haar measures into right Haar measures as in the following Proposition. Let $i:G\to G$ denote the inversion map of $G$.

\begin{proposition}\label{prop:inv_right}
Let $G$ be a locally compact Hausdorff group with left Haar measure $\mu$. Then $\overline{\mu}=i_{\ast}\mu:\calB(G)\to\bbR_{\ge 0}\cup\{\infty\},\ E\mapsto \mu(E^{-1})$ is a right Haar measure on $G$ with associated right Haar functional $\varrho:C_{c}(G)\to\bbC,\ f\mapsto\int_{G}f(x)\Delta_{G}(x^{-1})\ \mu(x)$. If $G$ is unimodular, then $\overline{\mu}=\mu$.
\end{proposition}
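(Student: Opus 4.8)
The plan is to verify the three claims in turn: that $\overline\mu$ is a right Haar measure, that the functional $\varrho$ is the right Haar functional corresponding to it under Proposition~\ref{prop:meas_func_corr}, and that $\overline\mu=\mu$ when $G$ is unimodular. For the first claim I would note that $i:G\to G$ is a homeomorphism, hence Borel measurable with $i_\ast\mu$ a well-defined measure on $\calB(G)$; the Radon properties \ref{item:LF}, \ref{item:OR}, \ref{item:IR} and non-triviality \ref{item:NT} all transfer under the homeomorphism $i$ (which sends compact sets to compact sets and open sets to open sets and preserves inclusions), so $\overline\mu$ is a non-trivial Radon measure. For right-invariance, given $E\in\calB(G)$ and $g\in G$ compute $\overline\mu(Eg)=\mu((Eg)^{-1})=\mu(g^{-1}E^{-1})=\mu(E^{-1})=\overline\mu(E)$, using left-invariance of $\mu$ in the third step. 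This is the cleanest of the three parts.

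For the second claim I would identify the right Haar functional $\Phi\overline\mu$ obtained from $\overline\mu$ by integration and show it equals $\varrho$. Using the change of variables formula (Proposition~\ref{prop:chan_var}) with $\varphi=i$, for $f\in C_c(G)$ we have $\int_G f(x)\ \overline\mu(x)=\int_G f(x)\ i_\ast\mu(x)=\int_G f(x^{-1})\ \mu(x)$. So it remains to show $\int_G f(x^{-1})\ \mu(x)=\int_G f(x)\Delta_G(x^{-1})\ \mu(x)$ for all $f\in C_c(G)$. I expect this to be the main obstacle. The standard approach: define $\lambda'(f):=\int_G f(x^{-1})\ \mu(x)$; this is a positive linear functional on $C_c(G)$, and one checks it is $\varrho_G$-invariant, since $\lambda'(\varrho_G(g)f)=\int_G f(x^{-1}g)\ \mu(x)=\int_G f((g^{-1}x)^{-1})\ \mu(x)=\lambda'(f)$ by left-invariance of the Haar functional $\lambda=\Phi\mu$. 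Hence $\lambda'$ is a right Haar functional, so by Theorem~\ref{thm:haar} (uniqueness) there is a constant $c>0$ with $\lambda'(f)=c\int_G f(x)\Delta_G(x^{-1})\ \mu(x)$ (the right-hand side also being a right Haar functional, which one verifies using \eqref{eq:M'}). To pin down $c=1$ one uses a symmetry/continuity argument: apply the identity with a suitable test function concentrated near $e$ where $\Delta_G(x^{-1})\approx 1$, using continuity of $\Delta_G$ at $e$ and Urysohn's Lemma~\ref{lem:urysohn} to produce $\varphi$ with $\varphi(x)=\varphi(x^{-1})$ (e.g. replace $\varphi$ by $x\mapsto\varphi(x)+\varphi(x^{-1})$), so that $\lambda'(\varphi)=\int\varphi(x)\ \mu(x)$ while $\int\varphi(x)\Delta_G(x^{-1})\ \mu(x)\to\int\varphi(x)\ \mu(x)$ as the support shrinks, forcing $c=1$.

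Finally, the unimodular case is immediate: if $G$ is unimodular then $\Delta_G\equiv 1$ by Corollary~\ref{cor:crit_unimod}, so $\varrho(f)=\int_G f(x)\ \mu(x)=\lambda(f)$, meaning $\overline\mu$ and $\mu$ have the same associated functional; by the uniqueness in Riesz' Theorem~\ref{thm:riesz} (equivalently, by $\Psi$ being well-defined in Proposition~\ref{prop:meas_func_corr}), $\overline\mu=\mu$. Alternatively one can argue directly: for unimodular $G$, $\mu$ is both left and right invariant, and $\overline\mu$ is a right Haar measure, so by Theorem~\ref{thm:haar} they differ by a positive scalar; evaluating both on a symmetric compact neighbourhood of $e$ (one with $K=K^{-1}$, obtainable from Lemma~\ref{lem:prod_nbhd}) gives $\overline\mu(K)=\mu(K^{-1})=\mu(K)$, so the scalar is $1$.
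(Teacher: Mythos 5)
Your proposal is correct, and it shares the paper's overall skeleton: both arguments reduce everything to showing that the functional $f\mapsto\int_G f(x^{-1})\ \mu(x)$ (which is $\Phi\overline{\mu}$ after a change of variables along $i$) and $\varrho$ are right Haar functionals, hence proportional by uniqueness, and then identify the constant $c$ of proportionality. Where you genuinely diverge is in pinning down $c=1$. The paper applies the relation $\Phi\overline{\mu}=c\varrho$ twice --- once to $f$ and once to $x\mapsto f(x^{-1})\Delta_G(x)$ --- and exploits that $i$ is an involution together with $\Delta_G(x)\Delta_G(x^{-1})=1$ to obtain $\int_G f\ \overline{\mu}=c^{2}\int_G f\ \overline{\mu}$, whence $c^{2}=1$ and $c=1$ since $c>0$; this needs no continuity of $\Delta_G$. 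You instead test the identity on symmetrised bump functions $\varphi(x)=\varphi(x^{-1})$ supported in shrinking symmetric neighbourhoods of $e$ on which $\vert\Delta_G(x^{-1})-1\vert<\varepsilon$, so that $\lambda'(\varphi)=\int_G\varphi\ \mu$ while $\vert c\varrho(\varphi)-c\int_G\varphi\ \mu\vert\le c\varepsilon\int_G\varphi\ \mu$, giving $\vert 1-c\vert\le c\varepsilon$ for every $\varepsilon>0$. That is the classical argument and is perfectly valid; I would only advise phrasing the conclusion as this two-sided estimate rather than the slightly loose claim that one integral ``tends to'' the other, since both integrals change as $\varphi$ does. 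The paper's involution trick is a little slicker (no symmetrisation, no appeal to continuity of $\Delta_G$), while your version is the one that generalises when no convenient involution is at hand. Your handling of the remaining parts --- transferring the Radon properties under the homeomorphism $i$, the direct computation of right-invariance, and the unimodular case via Riesz uniqueness or evaluation on a symmetric compact neighbourhood --- matches what the paper leaves to the reader.
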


\begin{proof}
The map $\overline{\mu}$ is readily checked to be a right Haar measure on $G$. The map $\varrho$ is clearly positive and linear. Its non-triviality follows as in the proof of Proposition \ref{prop:meas_func_corr} using that $\Delta_{G}(g)\gneq 0$ for all $g\in G$. As to $\varrho_{G}$-invariance, changing variables and using $R_{g\ast}\mu=\mu_{g^{-1}}$ yields
\begin{align*}
\varrho(\varrho_{G}&(g)f)=\int_{G}f(xg)\Delta_{G}(x^{-1})\ \mu(x)=\int_{G}f(x)\Delta_{G}(gx^{-1})\ \mu_{g^{-1}}(x)= \\
&=\int_{G}f(x)\Delta_{G}(g)\Delta_{G}(x^{-1})\Delta_{G}(g^{-1})\ \mu(x)=\int_{G}f(x)\Delta_{G}(x^{-1})\ \mu(x)=\varrho(f).
\end{align*}
for every $f\in C_{c}(G)$ and $g\in G$. Overall, $\varrho$ is a right Haar functional on $G$.

Now, let $\Phi\overline{\mu}$ denote the right Haar functional associated to $\overline{\mu}$ by Proposition \ref{prop:meas_func_corr}. Then there is a strictly positive real number $c$ such that $\Phi\overline{\mu}=c\varrho$. Applying the change of variables formula given by Proposition~\ref{prop:chan_var}, we obtain for all $f\in C_{c}(G)$:
\begin{align*}
  \int_{G}f(x)\ \overline{\mu}(x)&=c\int_{G}f(x)\Delta_{G}(x^{-1})\ \mu(x)=c\int_{G}f(x^{-1})\Delta_{G}(x)\ \overline{\mu}(x) \\
  &=c^{2}\int_{G}f(x^{-1})\Delta_{G}(x)\Delta_{G}(x^{-1})\ \mu(x)=c^{2}\int_{G}f(x)\ \overline{\mu}(x).
\end{align*}
Let $K$ be a compact symmetric neighbourhood of a point in $G$ and $f\in C_{c}(G)$ with $K\prec f\prec G$. Then $\int_{G}f(x^{-1})\ \mu(x)\in(0,\infty)$ and hence $c=1$. Henceu unimodularity of $G$ implies $\mu=\overline{\mu}$.
\end{proof}

\section{Coset spaces}\label{sec:coset_spaces}

Let $G$ be a locally compact Hausdorff group and let $H$ be a closed subgroup of $G$. When $H$ is normal in $G$, there exists a left (right) Haar measure on $G/H$ by Theorem \ref{thm:haar}. We now address the question under which circumstances there exists a $G$-invariant Radon measure on $G/H$ that is non-zero on non-empty open sets when $H$ is not normal in $G$. We shall refer to such a measure as a \emph{Haar measure on $G/H$} by abuse of notation. The following example shows that a Haar measure on $G/H$ may or may not exist.

\begin{example}
Let $G=\SL(2,\bbR)$.
\begin{enumerate}[(i)]
 \item Consider the natural, transitive action of $G$ on $X=\bbR^{2}\backslash\{0\}$ and the stabiliser
   \begin{displaymath}
    \displaybump H:=\stab_{G}((1,0)^{T})=\left\{\left.\begin{pmatrix}1 & x \\ & 1\end{pmatrix}\right| x\in\bbR\right\}.
   \end{displaymath}
   Then $G/H\cong X$ on which the restricted $2$-dimensional Lebesgue measure is a Haar measure.
 \item Consider the natural, transitive action of $G$ on $X=\bbP^{1}\bbR=\{V\le\bbR^{2}\mid \dim V=1\}$ and
   \begin{displaymath}
   \displaybump H:=\stab_{G}(\langle e_{1}\rangle)=\left\{\left.\begin{pmatrix}x & y \\ & x^{-1}\end{pmatrix}\right| x\in\bbR\backslash\{0\},y\in\bbR\right\}.
   \end{displaymath}
   Here, $G/H\cong X$ does not admit a Haar measure: Indeed, consider the compact subsets $E_{1}:=\{\langle(1,t)^{T}\rangle\mid t\in[0,1]\}$ and $E_{2}:=\{\langle(t,1)^{T}\rangle\mid t\in[0,1]\}$ of $\bbP^{1}\bbR$. Then
   \begin{displaymath}
    \displaybump \begin{pmatrix} 1 & -1 \\ & 1 \end{pmatrix}E_{1}=E_{1}\cup E_{2} \quad\text{and}\quad \begin{pmatrix} 1 & -1 \\ 1 & \end{pmatrix}E_{1}=E_{2}.
   \end{displaymath}
   A Haar measure on $G/H$ would assign finite non-zero measure to the compact sets $E_{1}$ and $E_{2}$. Combined with $G$-invariance contradicts the above two equalities. Note that $H$ is the non-unimodular group of Example \ref{ex:non_unimod}.
\end{enumerate}
\end{example}

\begin{theorem}\label{thm:measure_coset}
Let $G$ be a locally compact Hausdorff group with left Haar measure $\mu$ and let $H$ be a closed subgroup of $G$ with left Haar measure $\nu$. Then there exists a Haar measure $\xi$ on $G/H$ if and only if $\Delta_{G}|_{H}\equiv\Delta_{H}$. In this case, $\xi$ is unique up to strictly positive scalar multiples and suitably normalized satisfies for all $f\in C_{c}(G)$:
\begin{equation*}
  \int_{G}f(g)\ \mu(g)=\int_{G/H}\int_{H}f(gh)\ \nu(h)\ \xi(gH).
  \tag{W}
  \label{eq:W}
\end{equation*}
\end{theorem}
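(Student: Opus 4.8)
The plan is to define a candidate linear functional on $C_c(G/H)$ via the formula (W), show it is well-defined, positive and $G$-invariant, and then invoke Riesz to obtain $\xi$; the compatibility condition $\Delta_G|_H\equiv\Delta_H$ is exactly what makes the construction work, and its failure obstructs existence. First I would set up the averaging map $P:C_c(G)\to C_c(G/H)$, $(Pf)(gH):=\int_H f(gh)\ \nu(h)$. One checks $Pf$ is well-defined (the integrand depends on the coset representative only up to right translation by $H$, and left $\nu$-invariance on $H$ handles this), that it is continuous, compactly supported (its support lies in $\pi(\supp f)$, which is compact by continuity of $\pi$), and — this is the key surjectivity-type input — that $P$ maps $C_c(G)$ \emph{onto} $C_c(G/H)$, with moreover $f\ge 0$, $f\not\equiv 0$ forcing $Pf\ge 0$, $Pf\not\equiv 0$. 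Surjectivity is proved by a partition-of-unity / Urysohn argument: given $\varphi\in C_c(G/H)$ with support in a compact set $C$, use Proposition~\ref{prop:quot_cpct} to lift $C$ to a compact $K\subseteq G$ with $\pi(K)\supseteq C$, pick $\psi\in C_c(G)$ with $\psi\succ K$ via Lemma~\ref{lem:urysohn}, and set $f(g):=\psi(g)\,\varphi(gH)/(P\psi)(gH)$ on the set where $P\psi>0$ (which is open and contains $\pi(K)\supseteq C$) and $f:=0$ elsewhere; then $Pf=\varphi$.

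Next I would establish the crucial identity
\begin{displaymath}
  Pf=0\ \Longrightarrow\ \int_G f(g)\ \mu(g)=0 \qquad (f\in C_c(G)),
\end{displaymath}
which is where $\Delta_G|_H\equiv\Delta_H$ enters. The idea: if $Pf=0$, pick any $\psi\in C_c(G)$ with $P\psi\equiv 1$ on $\pi(\supp f)$, and compute $\int_G f(g)\ \mu(g)=\int_G f(g)(P\psi)(gH)\ \mu(g)$. Expanding $P\psi$ as an integral over $H$, applying Fubini (everything is $\sigma$-finite and compactly supported, so Theorem~\ref{thm:fubini} applies), substituting $g\mapsto gh^{-1}$ in the inner $G$-integral — this introduces the factor $\Delta_G(h)^{-1}$ via (M') — and then recognizing $\int_H \psi(gh)\Delta_G(h)^{-1}\cdots\ \nu(h)$: using $\Delta_G|_H=\Delta_H$ together with Proposition~\ref{prop:inv_right} (which relates $\nu$, $\bar\nu$ and the factor $\Delta_H(h^{-1})$ on $H$) lets one rewrite the double integral symmetrically and fold it back up into $\int_G \psi(g)(Pf)(gH)\ \mu(g)=0$. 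Conversely, running this computation \emph{without} assuming the identity yields a nonzero multiple $\Delta_H(h)/\Delta_G(h)$ inside the $H$-integral which cannot be absorbed, and testing against suitable $f$ shows $P$-null functions with nonzero $\mu$-integral exist, ruling out any invariant measure; this gives the "only if" direction (alternatively, the slicker route for "only if": if $\xi$ exists, the functional $f\mapsto\int_{G/H}Pf\ \xi$ is a left Haar functional on $G$ hence equals $c\int_G\cdot\ \mu$, and comparing how both sides transform under $\varrho_G(h)$ for $h\in H$ — the left side is $H$-invariant after the $P$-averaging up to $\Delta_H$, the right side contributes $\Delta_G(h)$ — forces $\Delta_G|_H=\Delta_H$).

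With the identity in hand, the functional $\Lambda:C_c(G/H)\to\bbC$, $\Lambda(\varphi):=\int_G f(g)\ \mu(g)$ for any $f$ with $Pf=\varphi$, is well-defined (independence of the lift is precisely the boxed implication applied to differences), clearly linear, and positive since a nonnegative $\varphi$ lifts to a nonnegative $f$ as noted above; it is nonzero because $P$ is onto and $\mu\neq 0$. Left $G$-invariance of $\Lambda$ follows from left-invariance of $\mu$ together with the equivariance $P(\lambda_G(x)f)=\lambda_{G/H}(x)(Pf)$, which is a one-line change of variables. Riesz (Theorem~\ref{thm:riesz}) then produces a Radon measure $\xi$ on $G/H$ with $\Lambda(\varphi)=\int_{G/H}\varphi\ \xi$; it is $G$-invariant and nonzero on nonempty opens (inner regularity plus $\Lambda>0$ on nonzero nonnegative $\varphi$, via Urysohn), i.e. a Haar measure on $G/H$, and by construction $\int_{G/H}Pf\ \xi=\int_G f\ \mu$, which is (W). Uniqueness up to scalars: if $\xi'$ is another, then $f\mapsto\int_{G/H}Pf\ \xi'$ is again a left Haar functional on $G$, hence a scalar multiple of $\int_G\cdot\ \mu$ by Theorem~\ref{thm:haar}, and since $P$ is surjective this pins down $\xi'$ as that same scalar multiple of $\xi$. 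The main obstacle is the boxed vanishing implication and its converse — getting the bookkeeping of the two modular functions and the Fubini swap exactly right is the technical heart of the argument.
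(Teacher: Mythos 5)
Your proposal is correct and follows essentially the same route as the paper: the surjective averaging map $f\mapsto f_{H}$ established via Proposition~\ref{prop:quot_cpct} and Urysohn's Lemma, the key vanishing implication ($f_{H}\equiv 0\Rightarrow\int_{G}f\ \mu=0$) proved with Fubini, Proposition~\ref{prop:inv_right} and the hypothesis $\Delta_{G}|_{H}\equiv\Delta_{H}$, then Riesz for existence, and the comparison of the two actions of $\varrho_{G}(h)$ for the ``only if'' direction and for uniqueness. The paper's proof is exactly your ``slicker route'' for necessity and your lift-independence argument for sufficiency, so there is nothing substantive to add.
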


In the context of Theorem \ref{thm:measure_coset}, formula \eqref{eq:W} can be extended to hold for $f\in L^{1}(G)$, see e.g. \cite[Theorem 7.12]{KL06} and the explanations around it.

\begin{proof}(Theorem \ref{thm:measure_coset}, ``$\Rightarrow$'').
If $\xi$ exists as above, then the map
\begin{displaymath}
  \lambda:C_{c}(G)\to\bbC,\ f\mapsto \int_{G/H}\int_{H}f(gh)\ \nu(h)\ \xi(gH)
\end{displaymath}
is a left Haar functional on $G$ and therefore defines a left Haar measure $\mu$ on $G$. In particular, $\lambda(\varrho_{G}(t^{-1})f)=\Delta_{G}(t)\lambda(f)$ for all $t\in G$ and $f\in C_{c}(G)$ by (\ref{eq:M'}). On the other hand, we have for all $t\in H$ and $f\in C_{c}(G)$:
\begin{align*}
 \lambda(\varrho_{G}(t^{-1})f)&=\int_{G/H}\int_{H}(\varrho_{G}(t^{-1})f)(gh)\ \nu(h)\ \xi(gH) \\
 &=\int_{G/H}\int_{H}\Delta_{H}(t)f(gh)\ \nu(h)\ \xi(gH)=\Delta_{H}(t)\lambda(f).
\end{align*}
Using Urysohn's Lemma \ref{lem:urysohn}, choose $f\in C_{c}(G)$ to satisfy $K\prec f\prec G$ where $K$ is a compact neighbourhood of some point in $G$. Then $\int_{G}f(g)\ \mu(g)=\lambda(f)\in(0,\infty)$ and hence $\Delta_{G}|_{H}\equiv\Delta_{H}$.
\end{proof}

The proof of the converse assertion of Theorem \ref{thm:measure_coset} relies on the following description of compactly supported functions on $G/H$. Once more, Riesz' Theorem \ref{thm:riesz} is used to produce a measure.

\begin{lemma}\label{lem:coset}
Let $G$ be a locally compact Hausdorff group and $H$ a closed subgroup of $G$ with left Haar measure $\nu$. Then the following map is surjective:
\begin{displaymath}
C_{c}(G)\to C_{c}(G/H),\ f\mapsto \left(f_{H}:gH\mapsto \int_{H}f(gh)\ \nu(h)\right).
\end{displaymath}
\end{lemma}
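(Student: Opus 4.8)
The plan is to prove surjectivity of the averaging map $f \mapsto f_H$ by constructing, for a given $\varphi \in C_c(G/H)$, an explicit preimage. First I would reduce the problem to a purely local one: since $\varphi$ has compact support $C \subseteq G/H$, Proposition~\ref{prop:quot_cpct} gives a compact set $K \subseteq G$ with $\pi(K) \supseteq C$. Using local compactness and Urysohn's Lemma~\ref{lem:urysohn}, I would pick $\psi \in C_c(G)$ with $\psi \geq 0$ and $\psi > 0$ on $K$ (e.g. $K \prec \psi$), so that $\psi_H(gH) = \int_H \psi(gh)\,\nu(h) > 0$ for every $gH \in C$; note $\psi_H$ is continuous and the integral is over a compact slice so everything is finite. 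Here one should check $f_H$ is well-defined (independent of coset representative $g$, using left-invariance of $\nu$), continuous, and compactly supported with $\supp(f_H) \subseteq \pi(\supp f)$ — this is the routine bookkeeping that makes the map land in $C_c(G/H)$ in the first place.

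The key construction is then to set
\[
 f(g) := \begin{cases} \dfrac{\psi(g)\,\varphi(gH)}{\psi_H(gH)} & gH \in C, \\[1.2ex] 0 & gH \notin C. \end{cases}
\]
I would verify that $f \in C_c(G)$: on the open set $\pi^{-1}(U)$ where $U$ is an open neighbourhood of $C$ on which $\psi_H$ stays positive (such $U$ exists since $\psi_H$ is continuous and positive on the compact set $C$), the formula is a ratio of continuous functions with nonvanishing denominator, hence continuous; off $\supp\psi$ it is identically zero; and these two open sets cover $G$, so $f$ is continuous everywhere, while $\supp f \subseteq \supp \psi$ is compact. Finally I would compute $f_H$: for $gH \in C$,
\[
 f_H(gH) = \int_H f(gh)\,\nu(h) = \int_H \frac{\psi(gh)\,\varphi(ghH)}{\psi_H(ghH)}\,\nu(h) = \frac{\varphi(gH)}{\psi_H(gH)}\int_H \psi(gh)\,\nu(h) = \varphi(gH),
\]
where I used that $\varphi$ and $\psi_H$ are constant on the coset $gH$ (i.e. $ghH = gH$), and for $gH \notin C$ both $f_H(gH)$ and $\varphi(gH)$ vanish. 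Hence $f_H = \varphi$.

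I expect the main obstacle to be not any single hard inequality but rather assembling the continuity of $f$ cleanly at the ``boundary'' of $\supp\psi$ inside $\pi^{-1}(C)$: one must choose the auxiliary $\psi$ so that the set where the denominator $\psi_H$ could vanish is safely disjoint from the set where the numerator $\psi$ is nonzero near $C$, which is exactly why one takes $K \prec \psi$ with $\pi(K) \supseteq C$ — this forces $\psi_H$ to be strictly positive on all of $C$, giving the needed open neighbourhood $U$ by compactness of $C$ and continuity of $\psi_H$. The well-definedness of the averaging map (Fubini/left-invariance of $\nu$ is not even needed here, only the substitution $h \mapsto h_0 h$) and the support inclusion $\supp(f_H) \subseteq \pi(\supp f)$ are the other points deserving explicit mention, but they are straightforward once the topology on $G/H$ is recalled via $\pi$ being continuous, open and the quotient map.
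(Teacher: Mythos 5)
Your construction is the same as the paper's: multiply $\varphi\circ\pi$ by a bump $\psi$ with $K\prec\psi$ and $\pi(K)\supseteq\supp\varphi$, divide by $\psi_H\circ\pi$, and verify $f_H=\varphi$ by pulling the coset-constant factors out of the integral; that part is all correct. The one step that fails as written is the open cover in the continuity argument: the two open sets you name, $\pi^{-1}(U)$ and $G\setminus\supp\psi$, need not cover $G$. Since $\supp\psi$ is the \emph{closure} of $\{\psi\neq 0\}$, a boundary point $g_{0}$ of $\supp\psi$ with $\psi_H(g_{0}H)=0$ lies in neither set (already visible for $H=\{e\}$ and a triangular bump on $\bbR$, where $\pi^{-1}(U)=\{\psi>0\}$ and the endpoints of the support are left out). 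The repair is immediate: take the second open set to be $G\setminus\pi^{-1}(C)$ with $C:=\supp\varphi$, which is open because $C$ is compact, hence closed in the Hausdorff space $G/H$ (Proposition~\ref{prop:quot_haus}); there $f\equiv 0$ by definition, and together with $\pi^{-1}(U)$ it covers $G$ because $C\subseteq U$. (The paper instead uses $G\setminus KH$, open since $KH$ is closed, after checking that $\eta_H(gH)=0$ forces $g\notin KH$.) One further sentence worth adding: on $\pi^{-1}(U)\setminus\pi^{-1}(C)$ your piecewise definition gives $f=0$ while the ratio gives $\psi(g)\varphi(gH)/\psi_H(gH)$; these agree precisely because $\varphi$ vanishes off $C$, and that agreement is what entitles you to say $f$ coincides with a continuous ratio on all of $\pi^{-1}(U)$.
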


\begin{proof}
Several things need to be checked. First of all, for all $f\in C_{c}(G)$ and for all $gH\in G/H$, the integral $\int_{H}f(gh)\ \nu(h)$ is independent of the representative of $gH$ and finite. Next, for all $f\in C_{c}(G)$, the function $f_{H}$ is continuous as a parametrized integral as in the proof of the continuity of the modular function. Clearly, $\supp f_{H}\subseteq \pi(\supp(f))$ and hence $f_{H}\in C_{c}(G/H)$. It remains to prove surjectivity. To this end, let $F\in C_{c}(G/H)$. Pick $K\subseteq G$ such that $\pi(K)\supseteq\supp F$ (Proposition~\ref{prop:quot_cpct}) and let $\eta\in C_{c}(G)$ satisfying $K\prec\eta$ (Urysohn's Lemma~\ref{lem:urysohn}). Now define $f\in C_{c}(G)$ by
\begin{displaymath}
 f:G\to\bbC,\ g\mapsto\begin{cases} \frac{F(gH)\eta(g)}{\eta_{H}(gH)} & \eta_{H}(gH)\neq 0 \\ 0 & \eta_{H}(gH)= 0\end{cases}
\end{displaymath}
Again, we need to show that this function is continuous and has compact support. As for compact support, clearly $\supp f\subseteq\supp\eta$. In fact, if $G$ was compact, we could choose $\eta\equiv 1$. To show that $f$ is continuous, we show that it is continuous at every point of two open sets $U_{1}\subseteq G$ and $U_{2}\subseteq G$ satisfying $U_{1}\cup U_{2}=G$. On the set $U_{1}:=\{g\in G\mid \eta_{H}(gH)\neq 0\}$ it is continuous as a quotient of continuous functions; and on the set $U_{2}:=G\backslash KH$ it is continuous because it vanishes. Further, if $g\not\in U_{1}$, then $0=\eta_{H}(gH)=\int_{H}\eta(gh)\ \nu(h)$. Since $\eta$ is a non-negative continuous function, this implies $\eta(gh)=0$ for all $h\in H$, hence $g\not\in KH$, i.e. $g\in U_{2}$. With continuity and compact support established, it remains to show that $f_{H}\equiv F$. To this end, we compute
\begin{displaymath}
 f_{H}(gH)=\int_{H}\frac{F(ghH)\eta(gh)}{\eta_{H}(ghH}\ \nu(h)=F(gH)\frac{\int_{H}\eta(gh)\ \nu(h)}{\eta_{H}(gH)}=F(gH).
\end{displaymath}
Hence the map $(-)_{H}:C_{c}(G)\to C_{c}(G/H)$ is surjective.
\end{proof}

\begin{proof}(Theorem \ref{thm:measure_coset}, ``$\Leftarrow$'').
Lemma~\ref{lem:coset} allows us to pick a be a right-inverse $s:C_{c}(G/H)\to C_{c}(G)$ for the map $(-)_{H}:C_{c}(G)\to C_{c}(G/H),\ f\mapsto f_{H}$ of the same lemma. Now consider the map
\begin{displaymath}
  \lambda:C_{c}(G/H)\to\bbC,\ f\mapsto\int_{G}(s f)(g)\ \mu(g).
\end{displaymath}
Once $\lambda$ is independent of $s$, it is a positive linear functional. To prove that it is independent of $s$, it suffices to show that $\int_{G}f(g)\ \mu(g)=0$ whenever $f_{H}\equiv 0$. By Lemma~\ref{lem:coset} and Urysohn's Lemma~\ref{lem:urysohn} there is a function $\eta\in C_{c}(G)$ such that $(\supp f)H\!\prec\eta_{H}\!\prec G/H$. Then by Proposition \ref{prop:inv_right} we have
\begin{align*}
  \int_{G}f(g)\ \mu(g)&=\int_{G}\eta_{H}(gH)f(g)\ \mu(g)=\int_{G}\int_{H}\eta(gh)f(g)\ \nu(h)\ \mu(g) \\
  &=\int_{G}\int_{H}\eta(gh^{-1})f(g)\Delta_{H}(h^{-1})\ \nu(h)\ \mu(g).
\end{align*}
We may as well integrate over the compact spaces $\supp f\subseteq G$ and $(\supp\eta)^{-1}\supp f\cap H\subseteq H$ (Proposition \ref{prop:topgrp_mult}). Fubini's Theorem~\ref{thm:fubini} then allows us to continue the above computation by
\begin{displaymath}
    =\int_{H}\int_{G}\eta(gh^{-1})f(g)\Delta_{H}(h^{-1})\ \mu(g)\ \nu(h)=\int_{H}\int_{G}\eta(g)f(gh)\Delta_{H}(h^{-1})\Delta_{G}(h)\ \mu(g)\ \nu(h).
\end{displaymath}
Applying Fubini's Theorem \ref{thm:fubini} again, we deduce using that $\Delta_{G}|_{H}\equiv\Delta_{H}$ and $f_{H}\equiv 0$:
\begin{displaymath}
  =\int_{G}\eta(g)\int_{H}f(gh)\ \nu(h)\ \mu(g)=\int_{G}\eta(g)f_{H}(gH)=0
\end{displaymath}
which completes the proof that $\lambda$ is a positive linear functional. Hence, by Riesz' Theorem \ref{thm:riesz}, there exists a unique Radon measure $\xi$ on $G/H$ such that 
\begin{align*}
 \int_{G}(s f)(g)\ \mu(g)=\lambda(f)&=\int_{G/H}f(gH)\ \xi(gH) \\
 &=\int_{G/H}(s f)_{H}(gH)\ \xi(gH)=\int_{G/H}\int_{H}(s f)(gh)\ \nu(h)\ \xi(gH)
\end{align*}
for all $f\in C_{c}(G/H)$. The measure $\xi$ is checked to be non-zero on non-empty open sets as well as $G$-invariant, i.e. $\xi$ is a Haar measure on $G/H$. Since the above equation is independent of $s$, we may as well start with a function $f\in C_{c}(G)$, thus proving the existence of a unique Haar measure $\xi$ on $G/H$ satisfying (\ref{eq:W}). To complete the proof, we need to show that any Haar measure on $G/H$ (not necessarily satisfying (\ref{eq:W})) is a strictly positive scalar multiple of $\xi$: Let $\xi_{1},\xi_{2}$ be Haar measures on $G/H$. Then there are left Haar measures $\mu_{1},\mu_{2}$ on $G$ satisfying (\ref{eq:W}) for $\xi_{1}$ and $\xi_{2}$ respectively (see the converse direction of the proof). By uniqueness, $\mu_{2}=c\mu_{1}$ for some strictly positive real number $c$. Then $\xi_{2}$ and $c\xi_{1}$ both satisfy (\ref{eq:W}) for $\mu_{2}$. By the uniqueness proven above, $\xi_{2}=c\xi_{1}$.
\end{proof}

\begin{remark}
Retain the notation of Theorem~\ref{thm:measure_coset}. When $G$ is compact, we may choose $\eta\equiv 1$ in the proof of Lemma~\ref{lem:coset}. The constructed left Haar functional on $G/H$ is then given by
\begin{displaymath}
 \lambda:C_{c}(G/H)\to\bbC,\ f\mapsto \int_{G}\frac{f(gH)}{1_{H}(gH)}\ \mu(g)=\frac{1}{\nu(H)}\int_{G}f(gH)\ \mu(g).
\end{displaymath}
Notice that $\nu(H)$ is finite by Proposition \ref{prop:haar_cpct} given that  $H$ is compact as a closed subset of a compact space. Now, it is a fact (see \cite[Thm. 7.12]{KL06}) that the Haar measure $\xi$ on $G/H$ associated to $\lambda$ can be computed by evaluating $\lambda$ on characteristic functions. Thus, when $E\subseteq G/H$ is measurable,
\begin{displaymath}
	\xi(E)=\frac{\mu(\pi^{-1}(E))}{\nu(H)}, \quad\text{in particular}\quad \xi(G/H)=\frac{\mu(G)}{\nu(H)}.
\end{displaymath}
The auxiliary function $\eta$ merely mends the issues that arise when $G$ is not compact.
\end{remark}

\begin{example}\label{ex:haar_sl2}
To illustrate the usefulness of Theorem~\ref{thm:measure_coset}, we now provide a Haar functional for $G:=\SL(2,\bbR)$. Recall that $G$ acts transitively on the upper half plane $\bbH:=\{z\in\bbC\mid \text{Im}(z)>0\}$ via fractional linear transformations:
\begin{displaymath}
	\begin{pmatrix} a & b \\ c & d \end{pmatrix}z:=\frac{az+b}{cz+d} \quad\text{and}\quad \begin{pmatrix} \sqrt{y} & x\sqrt{y}^{-1} \\ & \sqrt{y}^{-1} \end{pmatrix}i=x+iy
\end{displaymath}
for $x\in\bbR$ and $y\in\bbR_{>0}$. Also, one readily verifies that $H:=\text{stab}_{G}(i)=\SO(2,\bbR)$. Hence the maps
\begin{displaymath}
	G/H\to\bbH,\ gH\mapsto gi \quad\text{and}\quad \bbH\to G/H,\ x+iy\mapsto\begin{pmatrix} \sqrt{y} & x\sqrt{y}^{-1} \\ & \sqrt{y}^{-1} \end{pmatrix}
\end{displaymath}
are mutually inverse $G$-isomorphisms. In fact they are homeomorphisms. Since $G$ is unimodular as a connected semisimple Lie group and $H$ is unimodular as a compact group by Proposition \ref{prop:class_unimod}, we obtain a Haar measure $\xi$ on $G/H\cong\bbH$ by Theorem \ref{thm:measure_coset}. Let $\nu$ be a left Haar measure on $H$. Then
\begin{displaymath}
	C_{c}(G)\to\bbC,\ f\mapsto \int_{G/H}\int_{H}f(gH)\ \nu(h)\ \xi(gH)
\end{displaymath}
is a left Haar functional on $G$. To make this computable, we use the homeomorphisms $H\cong S^{1}$ and $G/H\cong\bbH$ to change variables via Proposition \ref{prop:chan_var}, and the $\SL(2,\bbR)$-invariant Radon measure on $\bbH$ that stems from hyperbolic geometry. All together, the Haar functional on $G=\SL(2,\bbR)$ reads
\begin{displaymath}
	f\mapsto\int_{-\infty}^{\infty}\int_{0}^{\infty}\int_{0}^{2\pi}f\left(\begin{pmatrix}\sqrt{y} & x\sqrt{y}^{-1} \\ & \sqrt{y}^{-1}\end{pmatrix}\begin{pmatrix}\cos\theta & \sin\theta \\ -\sin\theta & \cos\theta\end{pmatrix}\right)\ d\theta\ \frac{d\lambda(y)\ d\lambda(x)}{y^{2}}.
\end{displaymath}
\end{example}

\begin{remark}\label{rem:non_unimod}
In the setting of Example \ref{ex:haar_sl2}, the group $P$ of Example \ref{ex:non_unimod} is the stabiliser in $\SL(2,\bbR)$ of the boundary point of $\bbH$ associated to the (unit-speed) geodesic $\gamma:[0,\infty)\to\bbH,\ t\mapsto i+ie^{it}$. Basically, $P$ translates $\gamma$ to asymptotic geodesics. More general, when $M$ is a symmetric space of non-compact type such as $\SL(n,\bbR)/\SO(n)$, let $G:=\mathrm{Iso}(M)^{\circ}$, $p\!\in\! M$ and $x\in\partial M$ a boundary~point. Then there is a strong dichotomy between $\mathrm{stab}_{G}(p)$ and $\mathrm{stab}_{G}(x)$ that pertains to compactness, connnectedness, transitivity, conjugacy and unimodularity. See \cite[2.17]{Ebe96} for details.
\end{remark}

\begin{comment}
\begin{displaymath}
\begin{tabular}{c|c}
$\mathrm{stab}_{G}(p)$ & $\mathrm{stab}_{G}(x)$ \\ \hline 
compact & non-compact \\
connected & not in general connected  \\
not transitive on $M$ & transitive on $M$ \\
one conjugacy class & in general several conjugacy classes \\ \hline
unimodular & not in general unimodular
\end{tabular}
\end{displaymath}
\end{comment}

\subsection{Discrete Subgroups}

When, in the discussion above, $\Gamma:=H$ is a discrete subgroup of $G$ and $G$ is second-countable, then integration over $G/\Gamma$ can be realized by integrating over a \emph{fundamental domain} for $G/\Gamma$ in $G$. In the following, we pick the counting measure $\nu$ as the Haar measure on $\Gamma$.

\begin{definition}
Let $G$ be a locally compact Hausdorff group and let $\Gamma$ be a discrete subgroup of $G$. A \emph{strict fundamental domain} for $G/\Gamma$ in $G$ is a set $F\in\calB(G)$ such that $\pi:F\to G/\Gamma$ is a bijection. A \emph{fundamental domain} for $G/\Gamma$ in $G$ is a set $F\in\calB(G)$ which differs from a strict fundamental domain by a set of measure zero with respect to any left Haar measure on $G$.
\end{definition}

\begin{proposition}\label{prop:dis_fund}
Let $G$ be a locally compact Hausdorff, second-countable group with a discrete subgroup $\Gamma$. Then there exists a fundamental domain for $G/\Gamma$ in $G$.
\end{proposition}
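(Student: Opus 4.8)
The plan is to construct a strict fundamental domain explicitly using a countable open cover by relatively compact sets together with an enumeration of $\Gamma$. First I would use second-countability together with local compactness to fix a countable family $(U_i)_{i\in\bbN}$ of relatively compact open subsets of $G$ covering $G$; this is possible since $G$ has a countable base and every point has a relatively compact open neighbourhood, so we may take those base elements that are relatively compact.

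Next, the key point is that the cosets $\pi(g)\in G/\Gamma$ are discrete in the fibre direction: since $\Gamma$ is discrete in $G$ and $G$ is Hausdorff, each relatively compact set meets only finitely many translates $g\Gamma$ in the sense that for fixed $g$ the set $\overline{U_i}\cap g\Gamma$ is finite (a discrete subset of a compact set). More relevantly, I would argue that $\pi(\overline{U_i})$ is a relatively compact subset of $G/\Gamma$, and that the restriction $\pi|_{\overline{U_i}}$ has fibres that are finite — indeed $\overline{U_i}g^{-1}\cap\Gamma$ is a discrete subset of the compact set $\overline{U_i}(\overline{U_i})^{-1}$, hence finite, so each coset is hit only finitely often. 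Then I would build $F$ greedily: enumerate $\Gamma=\{\gamma_1,\gamma_2,\dots\}$ if infinite (the finite/trivial case being easy), and set, recursively,
\begin{displaymath}
F_i := U_i \setminus\Bigl(\pi^{-1}\bigl(\pi(F_1\cup\dots\cup F_{i-1})\bigr)\cup\pi^{-1}\bigl(\pi(U_1\cup\dots\cup U_{i-1})\bigr)\Bigr),
\end{displaymath}
or more cleanly $F:=\bigcup_{i} \bigl(U_i\setminus \pi^{-1}\pi(U_1\cup\dots\cup U_{i-1})\bigr)$, then remove within each remaining piece all but one point of each coset using the finiteness of fibres over relatively compact sets and a measurable selection. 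Each such $F_i$ is Borel because $\pi$ is continuous and open (so $\pi(U_1\cup\dots\cup U_{i-1})$ is open and its preimage is open), and the final per-coset thinning removes at most a countable union of lower-dimensional slices.

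Then I would verify the two defining properties of a strict fundamental domain. Surjectivity of $\pi|_F$: given $x\in G/\Gamma$, pick $g\in\pi^{-1}(x)$; since $(U_i)$ covers $G$, let $i$ be minimal with $g'\in U_i$ for some $g'$ with $\pi(g')=x$; then by construction a representative of $x$ survives into $F$. Injectivity: by construction we never keep two points of the same coset — the nested removal $U_i\setminus\pi^{-1}\pi(U_{<i})$ ensures distinct $U_i$-contributions land in distinct cosets, and the intra-$U_i$ thinning handles collisions within a single $U_i$. Finally I would record that $F$ is Borel: it is a countable Boolean combination of the open sets $U_i$ and the open sets $\pi^{-1}\pi(U_j)$, minus a countable collection of removed points/slices, all of which are Borel.

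The main obstacle is the intra-$U_i$ thinning step: showing one can choose, Borel-measurably, exactly one representative from each $\Gamma$-coset meeting $U_i$. This is where second-countability is genuinely used — I would invoke that $\overline{U_i}$ is a compact metrizable (hence Polish) space, that the equivalence relation "same $\Gamma$-coset" restricted to $\overline{U_i}$ has finite classes (shown above via discreteness of $\Gamma$ inside a compact set), and that a Borel transversal then exists, e.g. by a Luzin--Novikov-type selection or, more elementarily, by fixing a Borel linear order on $\overline{U_i}$ (coming from a metric) and selecting the minimum of each finite class, which is a Borel operation. An alternative, avoiding heavy descriptive set theory, is to note that since $\Gamma$ is countable and $\pi$ is open, one can directly write $F=\bigcup_{i}\bigl(U_i\setminus\pi^{-1}\pi(U_{<i})\bigr)$ and then argue this already has \emph{countable} fibres, then delete a $\mu$-null set to make it strict — but that only yields a (non-strict) fundamental domain; since the statement asks merely for a fundamental domain, this lighter route suffices and I would present it, remarking that a genuinely strict one exists by the transversal argument.
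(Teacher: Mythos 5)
The route you commit to presenting --- take $F=\bigcup_{i}\bigl(U_i\setminus\pi^{-1}\pi(U_1\cup\dots\cup U_{i-1})\bigr)$ for an arbitrary relatively compact open cover and then ``delete a $\mu$-null set to make it strict'' --- has a genuine gap: the failure of injectivity of $\pi$ on this $F$ is not confined to a null set. Already for $G=\bbR$, $\Gamma=\bbZ$ and $U_1=(0,2)$ one gets $F=F_1=(0,2)$ (all later pieces are empty since $\pi(U_1)=G/\Gamma$), and every strict fundamental domain has Lebesgue measure $1$, so $F$ differs from any of them by a set of measure at least $1$. Hence this $F$ is not a fundamental domain in the sense defined in the paper, and no null modification repairs it. The intra-piece collisions you correctly identify can occupy positive measure, so they must be removed honestly rather than dismissed as negligible.

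Your heavier route (finite fibres over each $\overline{U_i}$ because $\overline{U_i}^{-1}\overline{U_i}\cap\Gamma$ is compact and discrete, hence finite, followed by a Borel selection of the minimum of each finite class for a Borel linear order on the compact metrizable set $\overline{U_i}$) does work and yields a strict fundamental domain, but it is far more machinery than needed. The paper's proof sidesteps the thinning entirely by one observation you are missing: since $\Gamma$ is discrete, $\pi$ is a local homeomorphism, so by Lemma~\ref{lem:prod_nbhd} every point has an open neighbourhood $V$ with $V^{-1}V\cap\Gamma=\{e\}$, on which $\pi$ is injective. Second-countability then gives a countable cover $(U_n)_{n\in\bbN}$ by such sets, and the same greedy removal $F_n=U_n\setminus\pi^{-1}\pi\bigl(\bigcup_{k<n}U_k\bigr)$ immediately produces a strict Borel fundamental domain: injectivity within each $F_n$ is automatic, while injectivity across pieces and surjectivity follow exactly as you argue. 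I recommend replacing your construction with this refinement of the cover; it eliminates both the descriptive-set-theoretic selection and the flawed null-set shortcut.
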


\begin{remark}
Note that, in Proposition \ref{prop:dis_fund}, second-countability of $G$ implies that $\Gamma$ is countable.
\end{remark}

\begin{proof}(Proposition \ref{prop:dis_fund}).
The canonical projection $\pi:G\to G/\Gamma$ is a local homeomorphism. In view of second-countability, this implies the existence of an open cover $(U_{n})_{n\in\bbN}$ of $G$ such that $\pi:U_{n}\to\pi(U_{n})$ is a homeomorphism for every $n\in\bbN$. Let $F_{1}=U_{1}$ and define inductively $F_{n}=U_{n}\backslash(U_{n}\cap\pi^{-1}\pi(\bigcup_{k<n}U_{k}))$. Then $F:=\bigcup_{n\in\bbN}F_{n}$ is a fundamental domain for $G/\Gamma$ in $G$.
\end{proof}

Integration over $G/\Gamma$ now reduces to integration over $G$ as follows.

\begin{proposition}\label{prop:int_discrete}
Let $G$ be a locally compact Hausdorff, second-countable group with left Haar measure $\mu$ and let $\Gamma$ be a discrete subgroup of $G$. Assume that $\Delta_{G}|_{\Gamma}\equiv\Delta_{\Gamma}$. Further, let $F$ be a fundamental domain for $G/\Gamma$ in $G$. Then a Haar measure $\xi$ on $G/\Gamma$ satisfying (\ref {eq:W}) exists and is associated to the following functional: $\lambda:C_{c}(G/\Gamma)\to\bbC,\ f\mapsto\int_{F}f(g\Gamma)\ \mu(g)$, i.e.
\begin{displaymath}
  \int_{G/\Gamma}f(g\Gamma)\ \xi(g\Gamma)=\int_{F}f(g\Gamma)\ \mu(g) \quad\text{for all}\quad f\in C_{c}(G/\Gamma).
\end{displaymath}
\end{proposition}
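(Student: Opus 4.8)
The plan is to deduce Proposition~\ref{prop:int_discrete} directly from Theorem~\ref{thm:measure_coset} by unravelling what formula~\eqref{eq:W} becomes when $H=\Gamma$ is discrete and $\nu$ is the counting measure. First I would note that the hypothesis $\Delta_G|_\Gamma\equiv\Delta_\Gamma$ is exactly what Theorem~\ref{thm:measure_coset} needs, so a Haar measure $\xi$ on $G/\Gamma$ satisfying~\eqref{eq:W} exists and is unique up to scaling; it only remains to identify the associated functional $\lambda$ with $f\mapsto\int_F f(g\Gamma)\ \mu(g)$. Since $\Gamma$ is discrete and $\nu$ is counting measure, for $\varphi\in C_c(G)$ the push-down is $\varphi_\Gamma(g\Gamma)=\int_\Gamma \varphi(g\gamma)\ \nu(\gamma)=\sum_{\gamma\in\Gamma}\varphi(g\gamma)$, a finite sum by compactness of $\supp\varphi$ and discreteness of $\Gamma$. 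Thus~\eqref{eq:W} reads $\int_G \varphi(g)\ \mu(g)=\int_{G/\Gamma}\sum_{\gamma\in\Gamma}\varphi(g\gamma)\ \xi(g\Gamma)$ for all $\varphi\in C_c(G)$, and by the construction in the proof of Theorem~\ref{thm:measure_coset} the functional $\lambda$ on $C_c(G/\Gamma)$ is characterised by $\lambda(f)=\int_G(sf)(g)\ \mu(g)$ where $s$ is any right inverse to $(-)_\Gamma$; equivalently, $\lambda(\varphi_\Gamma)=\int_G\varphi(g)\ \mu(g)$ for every $\varphi\in C_c(G)$.

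The key step is then to show that for a fundamental domain $F$ and any $f\in C_c(G/\Gamma)$ one has $\int_F f(g\Gamma)\ \mu(g)=\lambda(f)$. Given $f\in C_c(G/\Gamma)$, choose by Lemma~\ref{lem:coset} a $\varphi\in C_c(G)$ with $\varphi_\Gamma=f$, so that $\sum_{\gamma\in\Gamma}\varphi(g\gamma)=f(g\Gamma)$ for all $g$. I would first verify the identity for a \emph{strict} fundamental domain $F_0$, where $\pi:F_0\to G/\Gamma$ is a bijection: the translates $\{F_0\gamma\mid\gamma\in\Gamma\}$ partition $G$ (each coset meets $F_0$ exactly once), so by countable additivity and left-invariance of $\mu$,
\begin{displaymath}
\int_G\varphi(g)\ \mu(g)=\sum_{\gamma\in\Gamma}\int_{F_0\gamma}\varphi(g)\ \mu(g)=\sum_{\gamma\in\Gamma}\int_{F_0}\varphi(g\gamma)\ \mu(g)=\int_{F_0}\sum_{\gamma\in\Gamma}\varphi(g\gamma)\ \mu(g)=\int_{F_0}f(g\Gamma)\ \mu(g),
\end{displaymath}
where interchanging sum and integral is justified because on the compact set $\supp\varphi$ only finitely many translates $F_0\gamma$ contribute. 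Since the left-hand side equals $\lambda(\varphi_\Gamma)=\lambda(f)$, this proves $\lambda(f)=\int_{F_0}f(g\Gamma)\ \mu(g)$. Passing from $F_0$ to a general fundamental domain $F$ is harmless: $F$ and $F_0$ differ by a $\mu$-null set, and $g\mapsto f(g\Gamma)$ is bounded (being continuous with compact support on $G/\Gamma$, pulled back along $\pi$), so the two integrals agree; Proposition~\ref{prop:dis_fund} guarantees such an $F$ exists, and second-countability of $G$ is what makes $\Gamma$ countable so that the countable additivity argument applies.

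The main obstacle — really the only nonroutine point — is justifying the interchange of the sum over $\Gamma$ with the integral over the fundamental domain, and more basically checking that the sum $\sum_{\gamma\in\Gamma}\varphi(g\gamma)$ is locally finite. This is where discreteness of $\Gamma$ enters decisively: for $g$ in a relatively compact set $C$, the set $\{\gamma\in\Gamma\mid \varphi(g\gamma)\neq 0 \text{ for some }g\in C\}\subseteq C^{-1}\supp\varphi\cap\Gamma$ is the intersection of a compact set (Proposition~\ref{prop:topgrp_mult}) with a discrete closed subgroup, hence finite. On that compact piece the sum is a finite sum of continuous functions, so all interchanges are legitimate and $\varphi_\Gamma$ is genuinely continuous with compact support, reconciling with Lemma~\ref{lem:coset}. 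Once this finiteness is in hand, everything else is bookkeeping with countable additivity and left-invariance of $\mu$, and the displayed formula $\int_{G/\Gamma}f(g\Gamma)\ \xi(g\Gamma)=\int_F f(g\Gamma)\ \mu(g)$ follows by definition of $\xi$ via Riesz' Theorem~\ref{thm:riesz} applied to $\lambda$.
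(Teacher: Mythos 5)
Your overall strategy---decompose $G=\bigsqcup_{\gamma\in\Gamma}F_{0}\gamma$ for a strict fundamental domain $F_{0}$, push $\int_{G}\varphi\ \mu$ through this decomposition, and identify the result with $\int_{F_{0}}\varphi_{\Gamma}(g\Gamma)\ \mu(g)$---is exactly the paper's, and the conclusion is correct. However, two of your justifications are wrong, and one of them sits at precisely the point where the hypothesis $\Delta_{G}|_{\Gamma}\equiv\Delta_{\Gamma}$ has to be used.

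First, the equality $\int_{F_{0}\gamma}\varphi(g)\ \mu(g)=\int_{F_{0}}\varphi(g\gamma)\ \mu(g)$ is a change of variables under \emph{right} translation by $\gamma$, not left translation. For a left Haar measure it reads $\int_{F_{0}\gamma}\varphi(g)\ \mu(g)=\Delta_{G}(\gamma)\int_{F_{0}}\varphi(g\gamma)\ \mu(g)$ by \eqref{eq:M}, so the step as you wrote it holds only because $\Delta_{G}(\gamma)=\Delta_{\Gamma}(\gamma)=1$ for all $\gamma\in\Gamma$ (the counting measure being bi-invariant). Attributing it to ``left-invariance of $\mu$'' is false and obscures the fact that this is exactly where $\Delta_{G}|_{\Gamma}\equiv\Delta_{\Gamma}$ enters the computation; in your write-up the hypothesis is only used to invoke Theorem~\ref{thm:measure_coset}, whereas the paper explicitly flags this translation step as the place it is needed.

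Second, your justification for interchanging $\sum_{\gamma}$ with $\int_{F_{0}}$---that ``on the compact set $\supp\varphi$ only finitely many translates $F_{0}\gamma$ contribute''---is not true in general. A strict fundamental domain need not be relatively compact (the one produced by Proposition~\ref{prop:dis_fund} is a countable union of scattered pieces), and a compact set can meet infinitely many of its right translates: take $G=\bbR$, $\Gamma=\bbZ$, $F_{0}=\bigcup_{n\ge 0}\bigl([1-2^{-n},1-2^{-n-1})+n\bigr)$ and $K=[0,1]$. What is true, and what you correctly prove, is that for each fixed $g$ the sum $\sum_{\gamma}\varphi(g\gamma)$ has finitely many nonzero terms; but that alone does not license moving the sum inside the integral over a non-relatively-compact $F_{0}$. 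The correct justification is the one the paper gives: Fubini--Tonelli (Theorem~\ref{thm:fubini}) for $\mu$ restricted to $F_{0}$ against the counting measure on $\Gamma$, which applies because $G$ is $\sigma$-finite (locally compact and second-countable) and $\Gamma$ is countable, and because $\sum_{\gamma}\int_{F_{0}}|\varphi(g\gamma)|\ \mu(g)=\int_{G}|\varphi(g)|\ \mu(g)<\infty$. With these two repairs your argument matches the paper's.
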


\begin{proof}
The functional $\lambda$ is positive and linear. The associated Radon measure $\xi$ on $G/\Gamma$ is checked to be non-zero on non-empty open sets and $G$-invariant. Hence $\xi$ is a Haar measure on $G/\Gamma$. To prove that it satisfies (\ref{eq:W}), note that changing $F$ by a set of measure zero, we may assume that $F$ is a strict fundamental domain. Then $G$ is a countable disjoint union $G=\bigsqcup_{\gamma\in\Gamma}F\gamma$ and hence
\begin{align*}
  \int_{G}&f(g)\ \mu(g)=\sum_{\gamma\in\Gamma}\int_{F\gamma}f(g)\ \mu(g)=\sum_{\gamma\in\Gamma}\int_{F}f(g\gamma)\ \mu(g)=\int_{\Gamma}\int_{F}f(g\gamma)\ \mu(g)\ \nu(\gamma) \\ 
  &=\int_{F}\int_{\Gamma}f(g\gamma)\ \nu(\gamma)\ \mu(g)=\int_{F}f_{\Gamma}(g\Gamma)\ \mu(g)=\int_{G/\Gamma}f_{\Gamma}(g\Gamma)\ \xi(g\Gamma)=\int_{G/\Gamma}\int_{\Gamma}f(g\gamma)\ \nu(\gamma)\ \xi(g\Gamma)
\end{align*}
for all $f\in C_{c}(G)$, where the second equality follows from the assumption that $\Delta_{G}|_{\Gamma}\equiv\Delta_{\Gamma}\equiv 1$, and the application of Fubini's Theorem \ref{thm:fubini} is valid since $G$ is $\sigma$-finite as a locally compact, second-countable space and $\Gamma$ is $\sigma$-finite because it is countable.
\end{proof}

\begin{remark}
Retain the notation of Proposition \ref{prop:int_discrete}. The assumption that $\Delta_{G}|_{\Gamma}\equiv\Delta_{\Gamma}$ is not automatic. For instance, the subgroup
\begin{displaymath}
 \Gamma:=\left\{\left.\begin{pmatrix} e^{t} &  \\ & e^{-t} \end{pmatrix}\right| t\in\bbZ\right\}
\end{displaymath}
of the group $P$ of Example \ref{ex:non_unimod} is isomorphic to $\bbZ$ and discrete in $P$. However, for $\gamma:=\mathrm{diag}(e^{t},e^{-t})$ we have $\Delta_{P}(\gamma)=e^{-2t}\neq 1\equiv\Delta_{\Gamma}$ by Example \ref{ex:non_unimod} whenever $t\neq 0$.
\end{remark}

We end with a result about groups containing lattices. Recall that a \emph{lattice} $\Gamma$ in a locally compact Hausdorff group $G$ is discrete subgroup such that $G/\Gamma$ supports a finite Haar measure.

\begin{proposition}\label{prop:lat_unimod}
A locally compact Hausdorff group $G$ containing a lattice $\Gamma$ is unimodular.
\end{proposition}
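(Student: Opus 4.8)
The plan is to show that the modular function $\Delta_G$ is identically $1$ by exploiting the finite $G$-invariant measure on $G/\Gamma$. First I would observe that $\Gamma$ is discrete, hence as an abstract group it carries the counting measure as a (left and right) Haar measure, so $\Gamma$ is unimodular and $\Delta_\Gamma\equiv 1$. Let $\xi$ be a finite Haar measure on $G/\Gamma$, which exists by hypothesis. By Theorem~\ref{thm:measure_coset} applied with $H=\Gamma$, the mere existence of a Haar measure on $G/\Gamma$ forces $\Delta_G|_\Gamma\equiv\Delta_\Gamma\equiv 1$; this will be used below.

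The key step is to produce a second finite $G$-invariant measure on $G/\Gamma$ out of a right Haar measure on $G$, and compare. Fix a left Haar measure $\mu$ on $G$ and let $\overline\mu=i_\ast\mu$ be the associated right Haar measure from Proposition~\ref{prop:inv_right}. I would run the construction of Theorem~\ref{thm:measure_coset} once more, but now pushing the measure $\Delta_G(g)\,\mu(g)$ (equivalently, working with $\overline\mu$) down to $G/\Gamma$: because $\Delta_G|_\Gamma\equiv 1$, the map $f\mapsto f_\Gamma$ of Lemma~\ref{lem:coset} still intertwines the relevant translations, so the functional $f\mapsto\int_G (sf)(g)\,\Delta_G(g)\,\mu(g)$ on $C_c(G/\Gamma)$ is well-defined (independent of the section $s$) and yields a Radon measure $\xi'$ on $G/\Gamma$. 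Unwinding $G=\bigsqcup_{\gamma\in\Gamma}F\gamma$ for a strict fundamental domain $F$ (Proposition~\ref{prop:dis_fund}), one computes $\xi(E)=\mu(\pi^{-1}(E)\cap F)$ and $\xi'(E)=\int_{\pi^{-1}(E)\cap F}\Delta_G(g)\,\mu(g)$.

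Now I would check two things about $\xi'$. It is $G$-invariant: the effect of left-translating $E$ by $g_0$ changes the fundamental-domain representative of each coset, picking up exactly a factor coming from $\mu(g_0{}^{-1}\cdot)=\Delta_G$?—no: left translation by $g_0$ in $G/\Gamma$ corresponds to $g\mapsto g_0 g$ on $G$, and $\mu$ is already left-invariant, so the factor $\Delta_G(g)$ is what must be controlled; more cleanly, $\xi'$ is $G$-invariant because it is, up to the construction of Theorem~\ref{thm:measure_coset}, the invariant measure attached to the left Haar measure $g\mapsto\Delta_G(g)\,\mu(g)$—wait, that is not left Haar. The correct route: use $\overline\mu$ directly. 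Since $\overline\mu$ is a \emph{right} Haar measure, the analogue of Theorem~\ref{thm:measure_coset} for right actions—or simply the identity $\int_G f(g)\,\overline\mu(g)=\int_G f(g^{-1})\,\mu(g)$ together with $\Delta_G|_\Gamma\equiv 1$—lets one descend $\overline\mu$ to a genuine $G$-invariant Radon measure $\xi'$ on $\Gamma\backslash G$ (equivalently $G/\Gamma$ after inversion), and finiteness of $\xi'$ is inherited from finiteness of $\xi$ because $\xi$ and $\xi'$ are both Haar measures on $G/\Gamma$ and hence proportional by the uniqueness clause of Theorem~\ref{thm:measure_coset}: $\xi'=c\,\xi$ for some $c>0$. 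Finally, evaluating $\xi'=c\,\xi$ on all of $G/\Gamma$ and also on a left-translate, or more directly comparing $\int_{G/\Gamma}\xi'$ computed via $F$ against $\int_{G/\Gamma}\xi$, yields $\int_F\Delta_G(g)\,\mu(g)=c\,\mu(F)$ with $0<\mu(F)=\xi(G/\Gamma)<\infty$; pairing this with the fact that for every $g_0\in G$ one has $\int_F\Delta_G(g_0 g)\,\mu(g)=\int_{g_0 F}\Delta_G(g)\,\mu(g)=\int_F\Delta_G(g)\,\mu(g)$ (the last equality because $g_0 F$ is again, mod $\Gamma$-translates of measure-zero overlap, a fundamental domain and $\Delta_G|_\Gamma\equiv1$) forces $\Delta_G(g_0)\int_F\Delta_G(g)\,\mu(g)=\int_F\Delta_G(g)\,\mu(g)$, hence $\Delta_G(g_0)=1$. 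By Corollary~\ref{cor:crit_unimod}, $G$ is unimodular.

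The main obstacle I anticipate is making the descent of $\overline\mu$ (or of $\Delta_G\cdot\mu$) to $G/\Gamma$ fully rigorous: one must verify that the relevant functional on $C_c(G/\Gamma)$ is independent of the chosen right-inverse $s$ from Lemma~\ref{lem:coset}, which is exactly where $\Delta_G|_\Gamma\equiv\Delta_\Gamma\equiv1$ enters, and then that the resulting measure is both $G$-invariant and finite. Granting the machinery of Theorem~\ref{thm:measure_coset}, Proposition~\ref{prop:inv_right} and Proposition~\ref{prop:dis_fund}, the remaining computation with the fundamental domain $F$ is routine bookkeeping with $\sigma$-finiteness and Fubini (Theorem~\ref{thm:fubini}), exactly as in the proof of Proposition~\ref{prop:int_discrete}.
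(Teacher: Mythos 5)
Your opening step agrees with the paper: the existence of the finite Haar measure $\xi$ on $G/\Gamma$ together with Theorem~\ref{thm:measure_coset} gives $\Delta_G|_\Gamma\equiv\Delta_\Gamma\equiv 1$. But your route from there has a genuine gap, located exactly where you place all the weight: the finiteness of $I:=\int_F\Delta_G(g)\,\mu(g)$. Your identity $\Delta_G(g_0)\,I=\int_{g_0F}\Delta_G\,d\mu=I$ only forces $\Delta_G(g_0)=1$ if $0<I<\infty$, and $I=\infty$ is not excluded a priori ($\Delta_G$ is continuous but need not be bounded on $F$, and $F$ need not be relatively compact). Your justification of finiteness --- that the functional $f\mapsto\int_G(sf)(g)\,\Delta_G(g)\,\mu(g)$ yields a measure $\xi'$ with $\xi'=c\,\xi$ by the uniqueness clause of Theorem~\ref{thm:measure_coset} --- fails, because $\xi'$ is \emph{not} $G$-invariant: choosing $s(\lambda_{G/\Gamma}(g_0)f)=\lambda_G(g_0)(sf)$ one computes $\lambda'(\lambda_{G/\Gamma}(g_0)f)=\int(sf)(u)\,\Delta_G(g_0u)\,\mu(u)=\Delta_G(g_0)\,\lambda'(f)$, so $\xi'$ transforms by the character $\Delta_G$ rather than being invariant (indeed $\Delta_G\cdot\mu$ is neither a left nor a right Haar measure on $G$; the right Haar measure is $\Delta_G(x^{-1})\,\mu(x)$ by Proposition~\ref{prop:inv_right}). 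Hence uniqueness does not apply and the proportionality $\xi'=c\,\xi$ is unavailable --- it would in fact already presuppose $\Delta_G\equiv1$. A secondary issue: your argument needs a fundamental domain, which Proposition~\ref{prop:dis_fund} only provides under second-countability, whereas the statement is for arbitrary locally compact Hausdorff groups.

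The paper avoids both problems by never integrating $\Delta_G$: since $\Gamma\subseteq\ker\Delta_G$, the modular function descends to $\widetilde\Delta_G:G/\Gamma\to\bbR_{>0}$, and one pushes the \emph{finite} measure $\xi$ forward to obtain a non-zero finite measure on $\bbR_{>0}$ invariant under multiplication by every element of $\Delta_G(G)$; a non-trivial multiplication moves some compact set of positive mass to infinitely many disjoint copies of equal mass, contradicting finiteness. You can repair your argument in the same spirit without abandoning the fundamental domain entirely: instead of $\int_F\Delta_G\,d\mu$, work with the sets $A_{a,b}:=\Delta_G^{-1}([a,b])$, which are unions of $\Gamma$-cosets; some $\mu(F\cap A_{a,b})>0$, and if $\Delta_G(g_0)=t_0\neq1$ the sets $g_0^kA_{a,b}$ (for $k$ in a suitable subsequence) are pairwise disjoint with equal $\xi$-mass, again contradicting $\xi(G/\Gamma)<\infty$. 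Your correct observations --- multiplicativity giving $\int_F\Delta_G(g_0g)\,d\mu=\Delta_G(g_0)I$ and the independence of $I$ from the choice of fundamental domain via $\Delta_G|_\Gamma\equiv1$ --- survive in this bounded form, but as written the proof does not close.
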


\begin{proof}
Suppose $\Gamma$ is a lattice in $G$. Since $G/\Gamma$ supports a finite Haar measure $\xi$, Theorem \ref{thm:measure_coset} implies that $\Delta_{G}|_{\Gamma}\equiv\Delta_{\Gamma}\equiv 1$ and hence $\ker\Delta_{G}\supseteq\Gamma$. Therefore, $\Delta_{G}$ factors through $G\to G/\Gamma$ via $\smash{\widetilde{\Delta}_{G}:G/\Gamma\to(\bbR_{\ge 0}^{\ast},\cdot)}$. Then $\smash{(\widetilde{\Delta}_{G})_{\ast}\xi}$ is a non-zero, finite measure on $\bbR_{\ge 0}^{\ast}$ which is invariant under the image of $\Delta_{G}$. This forces $\Delta_{G}\equiv 1$.
\end{proof}

\begin{comment}
\begin{remark}
The assumption of Proposition \ref{prop:int_discrete} that $\Delta_{G}|_{\Gamma}\equiv\Delta_{\Gamma}$ is automatic in the case of lattices by the proof of Proposition \ref{prop:lat_unimod}. However, it is not so for general discrete subgroups.

For instance, let $T_{d}=(X,Y)$ be the $d$-regular tree $(d\ge 3)$; we adopt Serre's conventions for graph theory, see \cite{Ser80}. Then $\Aut(T_{d})$ is a totally disconnected, locally compact Hausdorff group for the compact-open topology with basis of open sets the pointwise stabilisers of finite subgraphs.
Now, let $H\le\Aut(T_{d})$ be a closed subgroup and fix $x\in X$. We determine the modular function of $H$: Let $\mu$ be the left Haar measure of $H$. Since $H_{x}:=\stab_{H}(x)$ is compact and open, we have $\mu(H)\in(0,\infty)$ and hence we have for every $h\in H$:
\begin{displaymath}
 \Delta_{H}(h)=\frac{\mu(H_{x})}{\mu(H_{x}h)}=\frac{\mu(H_{x})}{\mu(h^{-1}H_{x}h)}=\frac{\mu(H_{x})}{\mu(h^{-1}H_{x}h)}
\end{displaymath}
\end{remark}
\end{comment}

\bibliographystyle{amsalpha}
\bibliography{haar_measures}

\end{document}